\documentclass[11pt, a4paper, reqno]{amsart}
\usepackage{amsmath,amsfonts, amsthm,amssymb,amsfonts,amscd, graphicx, hyperref, enumerate,mathtools, mathrsfs, dsfont, upgreek, appendix,xcolor, xfrac}

\newtheorem{theorem}{Theorem}[section]
\newtheorem{corollary}[theorem]{Corollary}
\newtheorem{lemma}[theorem]{Lemma}
\newtheorem{definition}[theorem]{Definition}
\newtheorem{prop}[theorem]{Proposition}
\theoremstyle{remark}
\newtheorem{remark}[theorem]{Remark}
\usepackage[left=2.5cm, right=2.5 cm]{geometry}

\newcommand{\eps}{\varepsilon} 
\setcounter{tocdepth}{1}
\makeatletter

\hypersetup{
  colorlinks   = true, 
  urlcolor     = black, 
  linkcolor    = black, 
  citecolor    = black 
}

\mathtoolsset{showonlyrefs=true}

\newcommand{\norm}[1]{\lVert#1\rVert} 
\DeclareMathOperator*{\supp}{supp} 
\DeclareMathOperator{\sign}{sgn} 

\newcommand*{\R}{\ensuremath{\mathbb{R}}}

\newcommand*{\N}{\ensuremath{\mathbb{N}}}




\def\Xint#1{\mathchoice
{\XXint\displaystyle\textstyle{#1}}%
{\XXint\textstyle\scriptstyle{#1}}%
{\XXint\scriptstyle\scriptscriptstyle{#1}}%
{\XXint\scriptscriptstyle\scriptscriptstyle{#1}}%
\!\int}
\def\XXint#1#2#3{{\setbox0=\hbox{$#1{#2#3}{\int}$ }
\vcenter{\hbox{$#2#3$ }}\kern-.6\wd0}}

\def\dashint{\Xint-}

\newcommand{\mean}[1]{\,-\hskip-1.08em\int_{#1}}
\renewcommand{\mean}[1]{\dashint_{#1}}

\newcommand{\new}{}
\newcommand{\cor}{}

\AtBeginDocument{
\let\div\undefined\DeclareMathOperator{\div}{div} 
}
\AtBeginDocument{
}
\makeindex

\author[Colombo]{Maria Colombo}
\address{EPFL SB, Station 8, 
CH-1015 Lausanne, Switzerland
}
\email{maria.colombo@epfl.ch}

\author[Haffter]{Silja Haffter
}
\address{EPFL SB, Station 8, 
CH-1015 Lausanne, Switzerland
}
\email{silja.haffter@epfl.ch}

\begin{document}
\title[]{Estimate on the dimension of the singular set of the supercritical surface quasigeostrophic equation}
\begin{abstract}
We consider the SQG equation with dissipation given by a fractional Laplacian of order $\alpha<\frac{1}{2}$. We introduce a notion of suitable weak solution, which exists for every $L^2$ initial datum, and we prove that for such solution the singular set is contained in a compact set in spacetime of {\new Hausdorff dimension at most $\frac{1}{2\alpha} \left( \frac{1+\alpha}{\alpha} (1-2\alpha) + 2\right)$.} 
\end{abstract}
\maketitle

\tableofcontents

\section{Introduction}
For $ \alpha \in \big(0, \frac{1}{2}\big]$ we consider the following fractional drift-diffusion equation
\begin{equation}\label{eq:SQG}
\begin{cases}
\partial_t \theta + u \cdot \nabla \theta = - (-\Delta)^\alpha \theta \\
\div u =  0 \, ,
\end{cases}
\end{equation}
where $\theta: \R^2 \times [0, \infty) \rightarrow \R$ is an active scalar, $u: \R^2 \times [0, \infty) \rightarrow \R^2$ is the velocity field and $(-\Delta)^\alpha$ corresponds to the Fourier multiplier with symbol $\lvert \xi \rvert^{2\alpha}$. The system is usually complemented with the initial condition 
\begin{equation}\label{eq:Cauchy}
\theta (\ \cdot \ , 0) = \theta_0\, .
\end{equation}
We will be particularly interested in the surface quasigeostrophic (SQG) equation where the velocity field $u$ is determined from $\theta$ by the Riesz-transform $\mathcal{R}$ on $\R^2$. More precisely, we require
\begin{equation}\label{eq:u}
 u = \nabla^\perp (-\Delta)^{-\frac{1}{2}} \theta = \mathcal{R}^\perp \theta\,.
\end{equation}
There is a natural scaling invariance associated to the system: whenever $(\theta, u)$ solves \eqref{eq:SQG}, then so does the pair 
\begin{equation}\label{eq:rescaling}
\theta_r(x,t) := r^{2\alpha-1} \theta(rx, r^{2\alpha}t) \qquad u_r(x,t)= r^{2\alpha-1} u(rx, r^{2\alpha}t)\,.
\end{equation}

\subsection{Main result} Our main result shows that for every $L^2$ initial datum and every $\alpha \in \big[\frac{9}{20}, \frac{1}{2}\big),$ there exists an almost everywhere smooth solution of the SQG equation and, more precisely, it provides a bound on the 
box-counting and Hausdorff dimension of the closed set of its singular points. 
\begin{theorem}\label{thm:main}
Let $\alpha_0:=  \frac{1+\sqrt{33}}{16}$. For any $\alpha \in \big(\alpha_0, \frac 12\big)$ and any initial datum $\theta_0\in L^2(\R^2)$ there is a Leray--Hopf weak solution $(\theta, u)$ of \eqref{eq:SQG}--\eqref{eq:u} (see Definition \ref{def:LH}) and a relatively closed set ${\rm Sing}\, \theta  \subset \mathbb R^2 \times (0, \infty)$ such that
\begin{itemize}
\item $\theta \in C^{\infty}\big([\R^2 \times (0, \infty) ]\setminus {\rm Sing} \,\theta\big)$,
\item ${\rm Sing} \, \theta \,\cap \, [\mathbb R^2 \times [t, \infty)]$ is compact {\new with box-counting dimension at most $\frac{1}{2\alpha} \left( \frac{1+\alpha}{\alpha} (1-2\alpha) + 2\right)$} for any $t>0\,,$
\item {\new the Hausdorff dimension of ${\rm Sing}\, \theta$ does not exceed  $\frac{1}{2\alpha} \left( \frac{1+\alpha}{\alpha} (1-2\alpha) + 2\right).$}

\end{itemize}
\end{theorem}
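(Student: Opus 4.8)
I would follow the Caffarelli--Kohn--Nirenberg paradigm, adapted to the nonlocal and anisotropic features of \eqref{eq:SQG}: (i) construct a \emph{suitable} weak solution carrying a local energy inequality; (ii) prove an $\eps$-regularity criterion asserting that smallness of a scale-invariant excess on a parabolic cylinder forces smoothness on a smaller cylinder; (iii) bound the set where the criterion fails by a covering argument. To make the fractional dissipation local I would pass to the Caffarelli--Silvestre extension $\theta^{\ast}$ of $\theta$ on $\R^{2}\times[0,\infty)$, so that $(-\Delta)^{\alpha}\theta$ becomes the weighted co-normal trace of the degenerate operator $\div(z^{1-2\alpha}\nabla\,\cdot\,)$ and the natural energy space is the weighted space $\XSS$. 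Building the solution through a vanishing regularization (mollified drift, or a small extra dissipation) and letting the parameter tend to $0$, I would retain, by weak lower semicontinuity, a local energy inequality controlling $\int \phi^{2}z^{1-2\alpha}\abs{\nabla\theta^{\ast}}^{2}$ together with the global Leray--Hopf bounds $\theta\in L^{\infty}_{t}\LL\cap L^{2}_{t}\dot H^{\alpha}$.

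\textbf{$\eps$-regularity.} The heart of the argument is a statement of the form: there exist $\eps_{0}>0$ and a homogeneity $\beta$ such that, writing $Q_{r}(z_{0})=B_{r}(x_{0})\times(t_{0}-r^{2\alpha},t_{0})$ for a cylinder adapted to the scaling \eqref{eq:rescaling}, if the excess $E(z_{0},r)=r^{-\beta}\int_{Q_{r}(z_{0})}(\text{dissipation density})$ — augmented by a nonlocal tail encoding the far field of $\theta$ — is at most $\eps_{0}$, then $\theta$ is bounded, and in fact $C^{\infty}$, on $Q_{r/2}(z_{0})$. Local boundedness I would obtain by a De Giorgi iteration on the truncations $(\theta-\lambda)_{+}$, running the level-set energy estimates through the extension and exploiting $\div u=0$ to handle the transport term; the genuinely nonlocal effect, that the regularity near $z_{0}$ feels the far field of $\theta$, is absorbed into the tail and controlled by the global energy. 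Once $\theta\in L^{\infty}_{\mathrm{loc}}$ the drift $u=\mathcal{R}^{\perp}\theta\in L^{\infty}_{\mathrm{loc}}$, an oscillation-decay (second De Giorgi) step yields a small amount of Hölder regularity, and a Schauder-type bootstrap for the fractional heat flow with Hölder drift upgrades $\theta$ to $C^{\infty}$. This proves the first bullet and identifies $\mathrm{Sing}\,\theta$ with the relatively closed set $\{z_{0}:\limsup_{r\to0}E(z_{0},r)>\eps_{0}\}$.

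\textbf{Dimension bound.} For the size of $\mathrm{Sing}\,\theta$ I would run a Vitali covering. The global energy makes the dissipation density integrable over spacetime with total mass $M<\infty$, while the contrapositive of $\eps$-regularity gives, at every singular point, $\int_{Q_{r}(z_{0})}(\text{dissipation density})\ge\eps_{0}r^{\beta}$ for all small $r$. Extracting a disjoint family $\{Q_{r}(z_{i})\}$ and using $\sum_{i}\int_{Q_{r}(z_{i})}(\text{dissipation density})\le M$ bounds its cardinality by $\eps_{0}^{-1}M\,r^{-\beta}$, so that $\mathrm{Sing}\,\theta\cap[\R^{2}\times[t,\infty)]$ — compact because $\eps$-regularity applies wherever the local excess is small, which by finiteness of the energy is everywhere outside a bounded spacetime region — is covered by $\lesssim r^{-\beta}$ cylinders of spatial radius $5r$. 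Each such cylinder has Euclidean diameter $\sim r^{2\alpha}$, so setting $\rho=r^{2\alpha}$ gives a cover by $\lesssim\rho^{-\beta/(2\alpha)}$ Euclidean balls of radius $\rho$; this yields the stated box-counting bound and, since $\sum\rho^{s}\to0$ for $s>\beta/(2\alpha)$, shows $\Haus^{s}$ vanishes there, whence by exhausting $t\downarrow0$ the Hausdorff bound on all of $\mathrm{Sing}\,\theta$. The interpolation in $\eps$-regularity fixes $\beta=\tfrac{1+\alpha}{\alpha}(1-2\alpha)+2$, and the conversion factor $1/(2\alpha)$ turns this into $d=\tfrac{1}{2\alpha}\big(\tfrac{1+\alpha}{\alpha}(1-2\alpha)+2\big)$.

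\textbf{Main obstacle.} The crux is closing the $\eps$-regularity in the \emph{supercritical} range $\alpha<\tfrac12$. There the scale-invariant energy excess does not control the drift at the level $C^{1-2\alpha}$ needed to run the oscillation decay, so one is forced to work with a strictly supercritical homogeneity $\beta>4\alpha$ and to quantify the loss; the iteration contracts across dyadic scales only when the deficit $1-2\alpha$ is small relative to the diffusive gain, which is precisely the constraint $\alpha>\alpha_{0}$. Equivalently, $\alpha_{0}=\tfrac{1+\sqrt{33}}{16}$ is the root of $8\alpha^{2}-\alpha-1=0$, i.e.\ the value at which $d$ reaches the ambient dimension $3$ and the estimate becomes vacuous, so keeping track of constants sharply through both the interpolation and the covering is essential.
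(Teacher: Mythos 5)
Your overall architecture (suitable weak solution with a local energy inequality through the Caffarelli--Silvestre extension, an $\eps$-regularity criterion, a Vitali covering converting parabolic cylinders into Euclidean balls at cost $1/(2\alpha)$, and the identification of $\alpha_0$ as the root of $8\alpha^2-\alpha-1=0$ where the dimension bound reaches $3$) matches the paper. But the core of your $\eps$-regularity step has a genuine gap: you propose to obtain smoothness by a De Giorgi scheme --- local boundedness of truncations, then an oscillation-decay step giving ``a small amount of H\"older regularity,'' then a Schauder bootstrap. In the supercritical range $\alpha<\tfrac12$ this is exactly the program of Caffarelli--Vasseur whose second and third steps are \emph{open}: the first step ($L^\infty$ bound) survives, but no De Giorgi oscillation decay is known to produce any H\"older modulus for the drift-diffusion equation with merely bounded/BMO drift when the diffusion is supercritical, and even granting some $C^\delta$ regularity, the bootstrap to smoothness requires the quantitative threshold $\delta>1-2\alpha$ (Constantin--Wu; Lemma \ref{lem:hoeldertosmooth}), which a De Giorgi argument does not deliver. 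The paper instead proves $\eps$-regularity by an excess-decay/linearization (compactness) argument in the style of Lin: a sequence of solutions with vanishing $L^p$-excess converges to a solution of the linear fractional heat equation, whose regularity forces decay of the excess at a fixed scale; iterating and applying Campanato's theorem yields $C^\delta$ with an explicitly computed $\delta>1-2\alpha$, which is where the constraints $p>\tfrac{1+\alpha}{\alpha}$ and $\alpha\ge\alpha_0$ actually bind.

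Two further ingredients of the paper are absent from your outline and are not cosmetic. First, since all $L^p$ norms of $u$ are supercritical under the scaling \eqref{eq:rescaling}, the averages $[u(s)]_{B_r}$ are not controlled as $r\to0$; the excess decay can only be iterated after a change of variables along a flow that sets these averages to zero at each scale (Lemma \ref{lem:changeofvar}), and the resulting tilting is what forces the covering by Euclidean balls of radius $\sim r^{2\alpha-2/q}$ rather than by parabolic cylinders. Second, the classical $L^2$-based local energy inequality only yields $\theta\in L^{2(1+\alpha)}_{loc}$ with $2(1+\alpha)<3$, too weak for the cubic nonlinearity; the paper builds suitable weak solutions satisfying a \emph{nonlinear} energy inequality for powers $|\eta|^q$, and the conversion of the final smallness hypothesis into a dissipation-density criterion requires factoring out $\|\theta\|_{L^\infty}^{p-2}$ and a nonlinear parabolic Poincar\'e inequality --- this is why the quantity in \eqref{eq:blabla} carries the $L^\infty$ prefactor and why the exponent $\tfrac{1+\alpha}{\alpha}$ (rather than the dimensionally natural $2$) appears in the final bound. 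Your phrase ``the interpolation in $\eps$-regularity fixes $\beta$'' conceals precisely these steps.
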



\begin{remark} We will in fact prove a slightly stronger statement, namely that all suitable weak solutions $\theta$ of \eqref{eq:SQG}--\eqref{eq:u} on $\mathbb R^2 \times (0, \infty)$ (see Definition \ref{def:sws}) satisfy the estimate on the dimension of the spacetime singular set ${\rm Sing} \, \theta$; in particular, they are smooth almost everywhere in spacetime. Moreover, the set ${\rm Sing } \, \theta $ is compact as soon as the initial datum is regular enough to guarantee local smooth existence.
\end{remark}

The regularity issue for the equation \eqref{eq:SQG}--\eqref{eq:u} is fully understood only in the subcritical and critical regime, namely for $\alpha \geq \frac{1}{2}$. The critical case (without bounderies) is now well-understood thanks to Kiselev, Nazarov and Volberg \cite{KiselevNazarovVolberg2007} and Caffarelli and Vasseur \cite{CaffarelliVasseur2010} (see also \cite{ConstantinVicol2012}) and one even has a description of the long time behaviour of the system \cite{ConstantinTarfuleaVicol2015,ConstantinCotiZelatiVicol2016}. On bounded domains, the critical case has been well-studied in a series of works initiated by \cite{ConstantinIgnatova2016}.
{In the supercritical range $\alpha < \frac{1}{2}\,,$ the global regularity of Leray-Hopf weak solutions to the SQG equation is an open problem related to the problem of global existence of classical solutions: in fact, it is well-known that Leray-Hopf weak solutions coincide with classical solutions as long as the latter exist.}
Constantin and Wu \cite{ConstantinWu2008,ConstantinWu2009} obtained partial results by extending the program of \cite{CaffarelliVasseur2010} to the supercritical regime. In \cite{CaffarelliVasseur2010} the technique of De Giorgi for uniformly elliptic equations with measurable coefficients is adapted to prove the smoothness of Leray-Hopf weak solutions in three steps: the local boundedness of $L^2$ solutions, the H\"older continuity of $L^\infty$ solutions, and the smoothness of H\"older solutions. While the $L^\infty$-bound for Leray-Hopf weak solutions still works in the supercritical case \cite{ConstantinWu2008}, only conditional regularity results are kown regarding the second and third step of the scheme. For instance, H\"older solutions in $C^\delta$ are smooth for $\delta>1-2\alpha$, while for $\delta<1-2\alpha$ this is left open \cite{ConstantinWu2009}. On the negative side, \cite{BuckmasterShkollerVicol2019} established non-uniqueness of a class of (very) weak-solution for the system \eqref{eq:SQG}--\eqref{eq:u}, even for subcritical dissipations. {\new In this context Theorem \ref{thm:main} is, to our knowledge, the first  a.e. smoothness / partial regularity result.} 
\bigskip

{\new  
\subsection{An $\eps$-regularity theorem} The estimate on the dimension of the singular set in Theorem \ref{thm:main} follows from a simple covering argument and a so-called $\eps$-regularity result: in order to fix the main ideas, we present the latter in a simplified version in Theorem \ref{prop:ereg2} below. 
In what follows we denote by $\theta^*$ the Caffarelli-Silvestre extension of $\theta$ and by $\mathcal{M}$ the maximal function with respect to the space variable (see Sections \ref{sec:CS} and \ref{sec:maximal}); $K_q$, as defined in \eqref{eq:Kq}, is a constant depending on the local-in-time $L^\infty_t L^q_x(\R^2)$ estimate of $\theta$ (recalled in Section \ref{s:LH}).
\begin{theorem}\label{prop:ereg2} Let $\alpha \in \big[\alpha_0, \frac{1}{2}\big)$, $ q \geq 8$ and $p:=\frac{1+\alpha}{\alpha} + \frac{1}{q}\,.$ There exists a universal $\varepsilon= \varepsilon(\alpha)>0$ such that the following holds: Let $(\theta, u)$ be a suitable weak solution of \eqref{eq:SQG}--\eqref{eq:u} on  $\R^2 \times (0,T)\,$ (see Definition \ref{def:sws}) satisfying 
\begin{align}\label{eq:blabla}
\frac{\norm{\theta}_{L^\infty(\R^2 \times [t-r^{2\alpha}, t+r^{2\alpha}])}^{p-2}}{r^{p(1-2\alpha)+2}} \Bigg( &\int_{\mathcal C^*(x,t;r)} y^b\lvert \overline \nabla \theta^* \rvert^2  \, dz\, ds \, dy +\int_{\mathcal C(x,t;r)} \mathcal{M}\left((D_{\alpha,2}\, \theta)^2\right) \, dz \, ds \Bigg) \leq \varepsilon\,,
\end{align}
where $\mathcal C(x,t;r):=B_{K_q r^{2\alpha-2/q}}(x) \times (t-r^{2\alpha}, t+r^{2\alpha})\,,$ $\mathcal C^*(x,t;r):=[0,r)\times \mathcal C(x,t;r)$ and
\begin{equation}
\label{eqn:Dalpha2}
(D_{\alpha, 2} \, \theta) (z, s):=\left( \int_{\R^2} \frac{\lvert \theta(z, s)-\theta(z',s) \rvert^2}{\lvert z-z' \rvert^{2+2\alpha}} \, dz' \right)^{\frac{1}{2}}\,.
\end{equation} Then $\theta$ is smooth on $B_{r/8}(x) \times (t-r^{2\alpha}/16,t + r^{2\alpha}/16)\,.$
\end{theorem}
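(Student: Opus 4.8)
The plan is to deduce smoothness from the smallness of \eqref{eq:blabla} by a De Giorgi--type iteration carried out on the Caffarelli--Silvestre extension, following the three-step scheme of Caffarelli--Vasseur \cite{CaffarelliVasseur2010}, and then to upgrade the resulting H\"older continuity to $C^\infty$ by a subcritical bootstrap. First I would use the scaling invariance \eqref{eq:rescaling} together with the a priori $L^\infty$ bound for suitable weak solutions to reduce to a normalized configuration: the prefactor $\norm{\theta}_{L^\infty}^{p-2}/r^{p(1-2\alpha)+2}$ in \eqref{eq:blabla} is exactly the weight that renders the bracketed quantity invariant under \eqref{eq:rescaling}, the exponent $p=\frac{1+\alpha}{\alpha}+\frac1q$ being dictated by the interpolation used to close the iteration. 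After this reduction the hypothesis reads: the extension Dirichlet energy $\int y^b|\overline\nabla\theta^*|^2$ together with the localized nonlocal energy $\int\mathcal M((D_{\alpha,2}\theta)^2)$ is at most $\eps$ on the normalized cylinder.

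The engine is the localized energy inequality built into Definition \ref{def:sws}, in which the extension turns the fractional dissipation $-(-\Delta)^\alpha\theta$ into the weighted Dirichlet integral $\int y^b|\overline\nabla\theta^*|^2$, while the transport $u\cdot\nabla\theta$ with $u=\mathcal R^\perp\theta$ produces a nonlinear term. Absorbing this drift term is the \emph{main obstacle}: since $u$ is a Riesz transform of $\theta$ it carries exactly the regularity of $\theta$, so in the supercritical range $\alpha<\frac12$ the dissipation does not dominate the transport through Sobolev embedding alone. I would handle it by truncating $\theta$ at a level $\lambda$, integrating by parts via $\div u=0$ so that the drift falls on the cutoff rather than on $(\theta-\lambda)_+$, and estimating the velocity's contribution pointwise by the nonlocal energy density $D_{\alpha,2}\theta$; its $L^2$ norm over the cylinder is then dominated by the spatial maximal function $\mathcal M((D_{\alpha,2}\theta)^2)$. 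This is precisely the role of the second term in \eqref{eq:blabla}, and it is where the smallness $\eps=\eps(\alpha)$ is spent; the far-field tails of $u$ are in turn controlled through the $L^\infty_t L^q_x$ bound, which is why the cylinders $\mathcal C$, $\mathcal C^*$ are dilated by the factor $K_q r^{2\alpha-2/q}$.

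With the drift thus absorbed, the energy inequality closes as an estimate for the Dirichlet energies of the truncations $(\theta-\lambda)_+$, and I would run the two De Giorgi lemmas. The first converts smallness of the normalized energy above a level into a one-sided pointwise gain on a smaller cylinder; feeding this into the oscillation lemma and iterating over dyadic scales yields a geometric decay $\mathrm{osc}_{r/2^k}\theta\le\lambda^k\,\mathrm{osc}_r\theta$ with $\lambda<1$, hence $\theta\in C^\delta$ on $B_{r/8}(x)\times(t-r^{2\alpha}/16,t+r^{2\alpha}/16)$ for some $\delta=\delta(\alpha)>0$.

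Finally I would bootstrap H\"older continuity to smoothness. Here the crucial constraint surfaces: as recalled from \cite{ConstantinWu2009}, $C^\delta$ solutions are smooth only when $\delta>1-2\alpha$, so the iteration must deliver a gained exponent \emph{above} this supercritical threshold. Tracking the structural constants through the oscillation lemma, one finds $\delta(\alpha)>1-2\alpha$ to be attainable precisely when $\alpha>\alpha_0=\frac{1+\sqrt{33}}{16}$, the positive root of $8\alpha^2-\alpha-1=0$ --- the same algebraic threshold at which the singular-set dimension bound of Theorem \ref{thm:main} drops below the full spacetime dimension $3$. I expect the quantitative matching of the gained exponent against $1-2\alpha$, rather than the iteration scheme itself, to be the genuinely delicate point, with the drift estimate of the second step as the principal technical hurdle along the way.
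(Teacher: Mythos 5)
Your proposal rests on running the Caffarelli--Vasseur De Giorgi scheme (truncation, the two De Giorgi lemmas, the oscillation lemma, dyadic iteration) in the supercritical range, but the second step of that scheme --- the oscillation lemma yielding H\"older continuity of bounded solutions --- is precisely what is \emph{not} known for $\alpha<\frac12$; as recalled in the introduction, \cite{ConstantinWu2008,ConstantinWu2009} could only make that step conditional in this regime. The obstruction is quantitative: under the rescaling \eqref{eq:rescaling} the drift scales like $r^{2\alpha-1}$, so zooming in \emph{amplifies} the velocity (in particular its spatial average on the unit ball is not controlled as $r\to 0$), and no smallness of the dissipative energy in \eqref{eq:blabla} can absorb an unboundedly large transport term in the iteration. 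Your sketch never confronts this: you propose to ``estimate the velocity's contribution pointwise by the nonlocal energy density $D_{\alpha,2}\theta$,'' but $u=\mathcal R^\perp\theta$ is controlled only in $L^\infty_t BMO_x$ and its large constant part is invisible to $D_{\alpha,2}\theta$. The claim that ``tracking the structural constants through the oscillation lemma'' produces a H\"older exponent $\delta(\alpha)>1-2\alpha$ exactly for $\alpha>\alpha_0$ is asserted, not derived, and there is no known mechanism by which a De Giorgi iteration delivers an exponent above the supercritical threshold $1-2\alpha$.

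The paper takes an entirely different route, designed around these two obstructions. The drift is tamed by a Galilean-type change of variables (Lemma \ref{lem:changeofvar}) which sets $[u(s)]_{B_{1/4}}=0$ at every scale of the iteration, at the price of composing $\theta$ with a flow whose tilt must then be controlled (this is the origin of the enlarged cylinders $\mathcal C(x,t;r)$ and of the parameter $q$). The regularity mechanism is not De Giorgi but an excess-decay/compactness/linearization argument (Lemma \ref{lem:compactness}, Proposition \ref{prop:excessdecay}, Theorem \ref{prop:ereg1}) built on an $L^p$ excess with $p>\frac{1+\alpha}{\alpha}$ and on the nonlinear local energy inequality \eqref{eqn:suit-weak-tested} --- this is why the notion of suitable weak solution is defined the way it is. Campanato's theorem then gives $\theta\in C^{\delta}$ with the explicit $\delta$ of \eqref{eqn:ctheta}, which exceeds $1-2\alpha$ for $\alpha\geq\alpha_0$, and Lemma \ref{lem:hoeldertosmooth} upgrades to smoothness. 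Finally, passing from the excess-based criterion of Theorem \ref{prop:ereg1} to the differential hypothesis \eqref{eq:blabla} requires the nonlinear parabolic Poincar\'e inequality of Lemma \ref{lem:nonlinearPoincare} and the sharp-maximal-function estimates of Section \ref{sec:nonloc} --- none of which appear in your outline. As written, your proposal reduces the theorem to an open problem rather than proving it.
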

The integral quantities present in \eqref{eq:blabla} are two non-equivalent localized versions of the dissipative part of the energy, i.e. the $L^2((0,T), W^{\alpha,2}(\R^2))$-norm of  $\theta$, and are globally controlled through the latter. At this point, the careful reader will object that Theorem \ref{prop:ereg2} cannot be used in a covering argument since the maximal function is not bounded on $L^1$. This issue represents a mere technical difficulty though: it is resolved by introducing a suitable variant of the sharp maximal function which leads to the more involved $\varepsilon$-regularity criterion of Corollary \ref{prop:fixedscale}.}
{\new Theorem \ref{prop:ereg2} is a consequence of the $\varepsilon$-regularity Theorem \ref{prop:ereg1} (which holds for every $\alpha \in (\frac 14, \frac 12)$) whose smallness requirement features an $L^p$-based excess quantity and can be met at some small scale by requiring \eqref{eq:blabla}. Theorem \ref{prop:ereg1} on the other hand is obtained} via an excess decay result and a linearization argument, in analogy with \cite{Lin1998} for the classical Navier-Stokes equations and with \cite{CDLM17} for the hyperdissipative Navier-Stokes equations. Nevertheless there are some novelties in our approach with respect to the corresponding results for Navier-Stokes: 
\begin{itemize}
\item 
Our $\varepsilon$-regularity result relies on the crucial observation (previously used in \cite{CaffarelliVasseur2010,ConstantinWu2009}) that the equation \eqref{eq:SQG} is invariant under a change of variables which sets the space average of $u$ to zero. 
Indeed, the scaling \eqref{eq:rescaling}, in contrast to the analogous situation for the Navier-Stokes equations, does not guarantee any control on the average of $u$ on $B_r$ in terms of the average of the rescaled solution $u_r$ on $B_1$ as $r \to 0$. The lack of control on the averages introduces a challenge to iterate the excess decay, since at each step we need to correct for this change of variable, in a similar spirit to  \cite{ConstantinWu2009}.
\item  As a second ingredient we introduce a new notion of suitable weak solution which enables us to perform energy estimates of nonlinear type controlling a potentially large power of $\theta$. {\new Such nonlinear energy estimates exploit the boundedness of Leray-Hopf weak solutions in an essential way and are not available for the Navier-Stokes equations. The freedom of choosing a suitable nonlinear power on the other hand is crucial in the context of the SQG equation:} Indeed, the classical (local) energy controls naturally $\theta \in L^\infty ((0,T), L^2(\R^2)) \cap L^2 ((0,T), W^{\alpha,2}(\R^2) )$ and hence, by interpolation, $\theta \in L^{2(1+\alpha)}(\R^2 \times (0, T)) $. Yet, since $2(1+\alpha) <3$ for $\alpha< \frac{1}{2}$, this is not enough to conclude a strong enough Caccioppoli-type inequality which accounts for the cubic nonlinearity in the local energy.
\item { On one side Theorem~\ref{prop:ereg2} may be seen as an analogue of Scheffer's result \cite{Scheffer77} for Navier-Stokes, providing  $\varepsilon$-regularity criterion at a fixed scale. On the other side, in order to give an estimate on the dimension of the singular set, the smallness \eqref{eq:blabla} must be required in terms of differential quantities of $\theta$, as it happens in the more refined result by Caffarelli, Kohn and Nirenberg for Navier-Stokes \cite{CaffarelliKohnNirenberg1982}. In the context of the SQG equation, the easier Corollary~\ref{thm:Scheffer} below may be seen as the full analogue of Scheffer's result. Although Corollary~\ref{thm:Scheffer} still establishes the compactness of the singular set, it does,  in contrast to Navier-Stokes, not yield any estimate on the dimension of the singular set.}

\end{itemize}
%

Using the ``continuity" of the aforementioned $\varepsilon$-regularity Theorem \ref{prop:ereg1} under strong convergence in $L^p$, an immediate consequence is the stability of the singular set in the fractional order $\alpha \in (\frac 14, \frac 12]$ which in particular recovers the following result of \cite{CotiZelatiVicol2016}.
\begin{corollary}[Gobal regularity for slightly supercritical SQG]\label{cor:stability} Let $\theta_0 \in H^2(\R^2)$ with $\norm{\theta_0}_{H^2}\leq R\, .$ Then there exists $\varepsilon= \varepsilon(R)>0$ such that \eqref{eq:SQG}--\eqref{eq:u} has a unique smooth solution $\theta \in L^\infty_{loc}([0, \infty), H^2(\R^2)) \cap L^2_{loc}((0, \infty), H^{2+ \alpha}(\R^2))$ for all fractional orders $\alpha \in \left[\frac{1}{2}-\varepsilon, \frac{1}{2}\right]\,.$
\end{corollary}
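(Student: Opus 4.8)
The plan is to argue by contradiction and compactness, leveraging the known global regularity of the critical ($\alpha=\tfrac12$) equation together with the stability of the singular set encoded in the $\varepsilon$-regularity Theorem \ref{prop:ereg1}. Since the endpoint $\alpha=\tfrac12$ is the classical critical case \cite{CaffarelliVasseur2010,KiselevNazarovVolberg2007}, it suffices to treat $\alpha$ slightly below $\tfrac12$. Suppose the statement fails for some $R$: then there are $\alpha_n \to \tfrac12$, initial data $\theta_0^n$ with $\norm{\theta_0^n}_{H^2}\le R$, and suitable weak solutions $\theta^n$ of \eqref{eq:SQG}--\eqref{eq:u} with $\alpha=\alpha_n$ (see Definition \ref{def:sws}), each possessing a singular point $(x_n,t_n)$. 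Three uniform ingredients are available: (i) by Sobolev embedding $\norm{\theta_0^n}_{L^\infty}\le CR$, so the maximum principle gives $\norm{\theta^n}_{L^\infty(\R^2\times(0,\infty))}\le CR$; (ii) the Leray--Hopf energy inequality bounds $\theta^n$ uniformly in $L^\infty((0,\infty);L^2)\cap L^2((0,\infty);W^{\alpha_n,2})$; (iii) the local well-posedness theory for $H^2$ data provides a time $T_0=T_0(R)>0$, uniform in $\alpha$ near $\tfrac12$, on which every $\theta^n$ is smooth, whence $t_n\ge T_0>0$.

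First I would remove the translation and time degrees of freedom. Replacing $\theta^n$ by $\tilde\theta^n(x,t):=\theta^n(x+x_n,\,t+t_n-1)$ produces suitable weak solutions, defined on spacetime cylinders that (since $t_n\ge T_0$) all contain a fixed neighbourhood of $(0,1)$, which still satisfy the uniform bounds (i)--(ii) and are singular at $(0,1)$. By the Aubin--Lions compactness lemma, these uniform bounds together with the equation yield, along a subsequence, strong convergence $\tilde\theta^n\to\theta^\infty$ in $L^p_{loc}$ of spacetime for the exponent $p$ relevant to Theorem \ref{prop:ereg1}, with $\theta^\infty$ a bounded suitable weak solution of the critical equation; here one must pass to the limit in the nonlinearity and in the local energy inequality using $\alpha_n\to\tfrac12$. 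By the critical regularity theory \cite{CaffarelliVasseur2010,KiselevNazarovVolberg2007}, which is an interior statement for bounded solutions, $\theta^\infty$ is smooth in a neighbourhood of $(0,1)$.

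Next I would transfer this smoothness back along the sequence. Since $\theta^\infty$ is smooth at $(0,1)$, for a sufficiently small scale $r$ the $L^p$-based excess of $\theta^\infty$ on the cylinder around $(0,1)$ lies strictly below the threshold $\varepsilon$ of Theorem \ref{prop:ereg1} (which is bounded below for $\alpha$ near $\tfrac12$). The continuity of this excess under strong $L^p$ convergence, and under $\alpha_n\to\tfrac12$, then forces the corresponding excess of $\tilde\theta^n$ to fall below threshold for $n$ large, so Theorem \ref{prop:ereg1} applies and $\tilde\theta^n$ is smooth near $(0,1)$ --- contradicting that $(0,1)$ is singular. This proves the absence of singularities, hence global smoothness; the asserted class $L^\infty_{loc}([0,\infty);H^2)\cap L^2_{loc}((0,\infty);H^{2+\alpha})$ then follows by the standard energy and bootstrap estimates for the now-smooth solution, and uniqueness follows from weak--strong uniqueness (Leray--Hopf solutions coincide with the smooth solution while the latter exists).

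The main obstacle is the compactness/limit step. One must establish \emph{strong} (not merely weak) $L^p$ convergence of $\tilde\theta^n$ while the dissipation exponent $\alpha_n$ itself varies, show that the limit is genuinely a suitable weak solution of the critical equation --- in particular that the local energy inequality and the Caffarelli--Silvestre extension terms appearing in \eqref{eq:blabla} pass to the limit --- and verify that the $\alpha_n$-dependent extension and fractional seminorms entering the excess are continuous along the convergence, so that the smallness can be transferred from $\theta^\infty$ to $\tilde\theta^n$. A secondary but essential technical point, already absorbed above through the spatial translation by $x_n$ and the time-shift fixing the singular point at $(0,1)$, is to prevent singular points from escaping to spatial infinity or to $t=0$; the uniform local existence time $T_0(R)$ is precisely what rules out concentration at the initial time, and the shift to $(0,1)$ handles equally the cases $t_n\to t_*<\infty$ and $t_n\to\infty$.
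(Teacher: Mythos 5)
Your proposal is correct and follows essentially the same route as the paper: a contradiction/compactness argument with $\alpha_n \to \tfrac12$, uniform $L^\infty$ and energy bounds, Aubin--Lions strong $L^p$ convergence to a critical Leray--Hopf solution that is smooth by \cite{CaffarelliVasseur2010}, and then transfer of the excess smallness back to $\theta_n$ via the continuity of the $\varepsilon$-regularity criterion (the paper's Lemma \ref{lem:continuity}, which in particular controls the convergence of the change-of-variables flows by Gr\"onwall) to contradict the existence of singular points. The only cosmetic differences are that you normalize the singular point to $(0,1)$ by translation, whereas the paper uses the compactness of the singular set to extract a convergent subsequence of singular points and then rescales to $Q_1$.
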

\begin{remark} The corollary could be set in any $H^{1+ \delta}(\R^2)$ for $\delta>0$ which is subcritical for orders $\alpha$ close to $\frac 12$ and there admits a (quantified) short-time existence of smooth solutions. In \cite{CotiZelatiVicol2016} the assumption  $\norm{\theta_0}_{H^2}\leq R$ is replaced by the scaling invariant assumption $\norm{\theta_0}^\alpha_{L^2}\norm{\theta_0}^{1-\alpha}_{\dot H^2}\leq R\,.$ The latter statement can be reduced to ours by applying a first rescaling which renormalizes the $L^2$-norm of the initial datum to $1$.  
\end{remark}

Moreover, by the decay of the $L^\infty$-norm of solutions (see Theorem \ref{thm:LerayConstWu} below), {the $\varepsilon$-regularity criterion} is verified for large times and we recover the eventual regularization of suitable weak solutions from $L^2$-initial data for $\alpha \in (\frac{1}{4}, \frac{1}{2})$ previously established for Leray--Hopf solutions in \cite{Silvestre2010} for $\alpha$ close to $ \frac 12$ and in \cite{Dabkowski2011,Kiselev2011} for any $\alpha \in (0,\frac 12)$. 


%
\bigskip

{\new 
\subsection{A conjecture on the optimal dimension estimate}Theorem~\ref{thm:main} leaves open the question of whether or not the estimate on the dimension of the singular set, as well as the range of $\alpha$ for which it is valid, is optimal.
We believe that a natural conjecture for an optimal estimate of the dimension of the spacetime singular set is 
\begin{equation}
\label{eqn:conj1}
\dim_\mathcal{P}({\rm Sing} \,\theta) \leq \frac{1}{2\alpha}(4-4\alpha) \,,
\end{equation}
and
\begin{equation}
\label{eqn:conj}
\dim_\mathcal{H}({\rm Sing} \,\theta) \leq \frac{1}{2\alpha}\big(4-4\alpha\big)\,.
\end{equation}
 In \eqref{eqn:conj1}, $\mathcal P$ is 
 the parabolic Hausdorff measure that is, for $\alpha < \frac{1}{2}\,,$ the Hausdorff measure resulting from restricting the class $\mathcal{F}$ of admissible covering sets to the spacetime cylinders $\tilde Q_r(x,t)= B_{r^{1/(2\alpha)}}(x) \times (t-r, t]\,.$ 
 We refer for instance to \cite{CDLM17} for its construction for $\alpha> \frac{1}{2}$. The cylinders $\tilde Q_r(x,t)$ are the natural choice for $\alpha<\frac12$ because their diameter is less than $4r$, at difference from the classical parabolic cylinders $B_r(x) \times (t-r^{2\alpha}, t]$ whose diameter is of the order of $r^{2\alpha}$.

The conjecture \eqref{eqn:conj1} is based on a dimensional analysis of the equation
: We may assign a ``dimension'' to any function $f(\theta)$ of $\theta$ via the exponent $\beta$ of the rescaling factor $1/r^\beta$ which makes the spacetime integral of $f(\theta)$ on $\tilde Q_r$ dimensionless, i.e. scaling-invariant with respect to \eqref{eq:rescaling}. The number appearing on the right-hand side of \eqref{eqn:conj1} corresponds then to 
 dimension of the energy, whose dissipative part is \emph{the} globally controlled quantity in the form of a spacetime integral which scales best. This would correspond to the result of Caffarelli, Kohn and Nirenberg \cite{CaffarelliKohnNirenberg1982} ) for the Navier-Stokes system  (see \cite{TangYu2015,CDLM17, KwonOzanski2020} for fractional dissipations of order $\alpha \in [\frac{3}{4}, \frac{5}{4})$)
who proved  that suitable weak solutions of the latter are smooth outside a closed set of dimension $1.$ In fact, for the Navier-Stokes system this bound on the dimension of the singular set is what the scaling of the equations and boundedness of the energy suggest.
Notice that the right-hand side of both \eqref{eqn:conj1} and \eqref{eqn:conj} does not converge to $0$ as $\alpha \to \frac 12$: this is due to the fact that the quantity that dictates the scaling-criticality of the equation, namely the $L^\infty$-norm of $\theta$, is not of integral type and hence cannot be used in a covering argument of the type that we do in the proof of Theorem~\ref{thm:main}. In turn, this covering argument finds his pivotal quantity in the dissipative part of the energy, which has a worse scaling than the $L^\infty$-norm of $\theta$.}

{\new In the proof of Theorem \ref{thm:main}
, it is natural to consider the classical Hausdorff measure, since the tilting effect of the change of variables, which sets the space average of $u$ to zero, forces us to work on balls in spacetime (rather than parabolic cylinders, see Section \ref{sec:fixedscale} and in particular Step 3 of the proof of Corollary \ref{prop:fixedscale}). This effect of the change of variables constitutes a serious obstacle for any parabolic Hausdorff dimension estimate. However, our estimate is nonoptimal: to obtain the optimal estimate, one should replace $\frac{1+\alpha}{\alpha}$ by $2$ in the estimate of the dimension of the singular set in Theorem~\ref{thm:main}; however, the integrability exponent $\frac{1+\alpha}{\alpha}$ represents the least possible exponent for which we are able to use a ``nonlinear" localized energy inequality in an excess decay argument (cf. Lemma \ref{lem:compactness}). An analogous difficulty appears for the ipodissipative Navier-Stokes equations for low fractional orders $\alpha < \frac{3}{4}$ where the Caccioppoli-type inequality as described before fails to be strong enough to control the cubic nonlinearity and indeed no estimate of the dimension of the singular set is known.
}

\subsection{Structure of the paper} The paper is structured as follows. After recalling some technical preliminaries in Section \ref{sec:preliminaries}, we discuss in Section \ref{s:energy} the global and local energy inequalities of the SQG equation and we define the notion of suitable weak solutions. The key compactness property of the latter is proven in Section \ref{sec:compactness} and leads to an excess decay result established in Section \ref{sec:excessdecay}. The iteration of the excess decay on all scales is performed in Section \ref{sec:iter} and requires to introduce a change of variables which sets to $0$ the average of the velocity $u$ on suitable balls. This excess decay yields the basis for  several $\varepsilon$-regularity results, in particular Theorem \ref{prop:ereg2}, which are deduced in Section \ref{sec:ereg}. The proof of Theorem \ref{thm:main} is given in Section \ref{sec:dimension}. In Section \ref{sec:stability}, we discuss the stability of the singular set with respect to variations of the fractional order of dissipation.

\section{Preliminaries}\label{sec:preliminaries}
\subsection{Notation} We use the following notation for space(time) averages of functions or vector fields $f$ defined on $\R^2 \times [0, \infty)$: For bounded sets $E \subseteq \R^2 \times [0, \infty)$ and $F \subseteq \R^2\,,$ we define
\begin{equation}
(f)_E := \mean{E} f(x,t) \, dx \,dt \qquad \text{ and } \qquad [f(t)]_F :=\mean{F} f(x,t) \, dx\,.
\end{equation}
We introduce the spacetime cylinder adapted to the parabolic scaling \eqref{eq:rescaling} of the equation
$$Q_r(x, t):=B_r(x) \times (t-r^{2\alpha}, t]\,.$$
In the upper half-space $\R^3_+$ we define $B_r^*(x):=B_r(x) \times [0, r)$ and we define the extended cylinder
$$Q_r^*(x, t) := B_r^*(x) \times (t-r^{2\alpha}, t]\,. $$ We will omit the center of the cylinders whenever $(x, t)=(0,0) \,.$ 
Moreover, we use the following convention to describe spacetime H\"older spaces: For $\alpha, \beta \in (0,1)$ and $Q \subset \R^2 \times \R$ we denote by $C^{\alpha, \beta}(Q)$ the functions which are $\alpha$- and $\beta$-H\"older continuous in space and time respectively, namely such that the following semi-norm is finite
$$\| \theta \|_{C^{\alpha, \beta}(Q)} = \sup\bigg\{\frac{\lvert \theta(x,t)-\theta(y,s) \rvert}{\lvert x-y \rvert^\alpha + \lvert s-t \rvert^\beta }:{ (x,t), (y,s) \in Q \mbox{ with }(x, t) \neq (y, s)} \bigg\}.$$ 
Whenever $\alpha=\beta\,,$ we denote the above space just by $C^\alpha(Q)\,.$ Furthermore, we will also work with spatial Sobolev spaces of fractional order: For $\Omega \subseteq \R^n$, $s \in (0,1)$ and $1 \leq p < \infty \,,$ we denote by
\begin{equation}
W^{s, p}(\Omega):=\left \{ f \in L^p(\Omega): \frac{\lvert f(x)-f(y) \rvert}{\lvert x-y \rvert^{\frac{n}{p} +s} } \in L^p(\Omega \times \Omega)\right\}  \,.
\end{equation} 
Correspondingly, we define for $f \in W^{s,p}(\Omega)$ the Gagliardo semi-norm by
\begin{equation}
 [f]_{W^{s,p}(\Omega)}:= \left(\int_{\Omega} \int_{\Omega} \frac{\lvert f(x)-f(y) \rvert^p}{\lvert x-y \rvert^{n +sp} } \, dx \, dy \right)^\frac{1}{p}\,.
\end{equation}
In the special $p=2$, we will sometimes denote $W^{\alpha,2}$ by $H^\alpha$ and  we recall that for $\Omega=\R^n$ the Gagliardo semi-norm coïncides, up to a universal constant, with the semi-norms \eqref{e:en_of_ext-CS}. Finally, we will consider the Bochner spaces $L^q((0, T), X)$ for $1 \leq q \leq \infty$ and for some Banach space $X$ (here: $X=L^p(\R^2)$ or $X=W^{\alpha, 2}(\R^2)$). Whenever we work on a parabolic cube $Q_r(x,t)$, we will use the short-hand notation
$$L^2 W^{\alpha, 2}(Q_r):= L^q((t-r^{2\alpha}, t), W^{\alpha,2}(B_r(x)))\,.$$

\subsection{Singular points}We call a point $(x,t) \in \R^2 \times (0, \infty)$ a regular point of a Leray-Hopf weak solution $\theta$ of \eqref{eq:SQG}--\eqref{eq:u}  (see Definition \ref{def:LH}) if there exists a neighbourhood of $(x,t)$ where $\theta$ is smooth.
We denote by ${\rm Reg} \, \theta$ the open set of regular points in spacetime. Correspondingly, we define the spacetime singular set ${\rm Sing} \, \theta:=[ \R^2 \times (0, \infty)] \setminus {\rm Reg} \, \theta \, .$

\subsection{Riesz-transform} We recall that the Riesz-transforms admit a singular integral representation. Indeed, for $f:  \R^2 \to [0, \infty)$ and $i=1,2$
\begin{equation}
\mathcal{R}_if (x)= c \, {\rm p.v.} \int_{\R^2} \frac{x_i-z_i}{\lvert x-z \rvert^3} f(z) \, dz \, .
\end{equation}
By Calderon-Zygmund they are bounded operators on $L^p$ for $1<p< \infty$ and from $L^\infty$ to $ BMO \, .$

\subsection{Caffarelli-Silvestre extension}\label{sec:CS} We recall the following extension problem. We use the notation $\overline \nabla$, $\overline \Delta$ for differential operators defined on the upper half-space $\mathbb R^{n+1}_+$.
\begin{theorem}[Caffarelli--Silvestre \cite{CaffarelliSilvestre2007}]\label{thm:CF}
	Let $\theta\in H^{\alpha} (\R^n)$ with $\alpha\in (0,1)$ and set $b:=1-2\alpha$. Then there is a unique ``extension'' $\theta^*$ of $\theta$ in the weighted space $H^1(\R^{n+1}_+,y^b)$ which satisfies
	\begin{equation}\label{harm}
	\overline{\Delta}_b \theta^*(x,y):=\overline \Delta \theta^*+\frac{b}{y}\partial_y \theta^* = \frac{1}{y^b} \overline{{\rm div}}\, \big(y^b \overline\nabla \theta^*\big)=0
	\end{equation}
	and the boundary condition
	\begin{equation}\label{eqn:b-c-caffarelli}
	\theta^*(x,0)=\theta(x)\, .
	\end{equation}
Moreover, there exists a constant $c_{n, \alpha}$, depending only on $n$ and $\alpha$, with the following properties:
\begin{itemize}
\item[(a)] The fractional Laplacian $(-\Delta)^{\alpha} w$ is given by the formula 
\begin{equation}
	\label{eqn:frac-lap-est-CS}
		(-\Delta)^{\alpha} \theta (x)=c_{n,\alpha}\lim_{y\to 0}y^b\partial_y \theta^* (x,y)\,.
	\end{equation}
\item[(b)] The following energy identity holds
\begin{equation}\label{e:en_of_ext-CS}
\int_{\R^n}|(-\Delta)^{\frac{\alpha}{2}} \theta|^2\,dx=\int_{\R^n}|\xi|^{2\alpha}|\widehat{\theta}(\xi)|^2\,d\xi=c_{n,\alpha}\int_{\R^{n+1}_+}y^b|\overline\nabla \theta^*|^2\,dx\,dy\,.
\end{equation}
\item[(c)] The following inequality holds for every extension $\eta \in H^1 (\R^{n+1}_+, y^b)$ of $\theta$:
\begin{equation}\label{e:minimum_ext_caff}
\int_{\R^{n+1}_+}y^b|\overline \nabla \theta^*|^2\,dx\,dy \leq \int_{\R^{n+1}_+}y^b|\overline \nabla \eta|^2\,dx\,dy\,.
\end{equation}\
\end{itemize}
\end{theorem}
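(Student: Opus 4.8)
The plan is to combine a variational construction, which yields existence, uniqueness and the minimality property (c) at once, with an explicit Fourier computation that produces the pointwise identities (a) and (b). Observe first that, since $\alpha \in (0,1)$ forces $b = 1-2\alpha \in (-1,1)$, the weight $y^b$ is a Muckenhoupt $A_2$ weight on $\R^{n+1}_+$; hence $H^1(\R^{n+1}_+, y^b)$ (understood as the weighted finite-Dirichlet-energy space) is a Hilbert space on which the trace operator $\eta \mapsto \eta(\cdot, 0)$ is well defined and maps \emph{onto} $H^\alpha(\R^n)$, with the Gagliardo seminorm of the trace controlled by the weighted Dirichlet energy. I would therefore minimize the strictly convex functional $E(\eta) := \int_{\R^{n+1}_+} y^b |\overline\nabla \eta|^2\, dx\, dy$ over the nonempty, closed, convex affine set $\{\eta : \eta(\cdot,0) = \theta\}$. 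The direct method gives a minimizer, strict convexity gives uniqueness, and this minimizer is exactly $\theta^*$; the inequality \eqref{e:minimum_ext_caff} of (c) is then immediate. Writing the first variation $\tfrac{d}{ds}\big|_{s=0} E(\theta^* + s\varphi) = 0$ for test functions $\varphi$ vanishing on $\{y=0\}$ gives the weak form $\int y^b \overline\nabla\theta^* \cdot \overline\nabla\varphi = 0$, i.e. the Euler--Lagrange equation $\overline{\mathrm{div}}(y^b\overline\nabla\theta^*) = 0$; interior regularity for this degenerate-elliptic equation upgrades it to the classical equation \eqref{harm}.

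For the explicit identities I would pass to the Fourier transform in $x$. The equation \eqref{harm} becomes the ordinary differential equation $\partial_y^2 \widehat{\theta^*} + \tfrac{b}{y}\partial_y\widehat{\theta^*} - |\xi|^2\widehat{\theta^*} = 0$, which is scale invariant under $y \mapsto y/|\xi|$; hence $\widehat{\theta^*}(\xi,y) = \widehat\theta(\xi)\,\phi(|\xi| y)$, where the profile solves $\phi'' + \tfrac{b}{t}\phi' - \phi = 0$ with $\phi(0) = 1$ and $\phi(t) \to 0$ as $t \to \infty$. This is a modified Bessel equation, and the unique decaying normalized solution is $\phi(t) = \tfrac{2^{1-\alpha}}{\Gamma(\alpha)}\, t^\alpha K_\alpha(t)$: near $t=0$ the two indicial exponents are $0$ and $2\alpha$, and the decaying solution expands as $\phi(t) = 1 - c_\alpha t^{2\alpha} + o(t^{2\alpha})$, while near infinity $\phi$ decays exponentially. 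In particular $\phi'(t) \sim t^{2\alpha-1}$ as $t \to 0$, so $\int_0^\infty t^b(\phi^2 + (\phi')^2)\,dt < \infty$ (at the origin the integrand behaves like a combination of $t^{2\alpha-1}$ and $t^{1-2\alpha}$, both exponents being $>-1$, and it decays exponentially at infinity), which confirms that $\theta^*$ has finite weighted Dirichlet energy. For the energy identity (b) I would apply Plancherel in $x$ and the change of variables $t = |\xi| y$: since $y^b\,dy = |\xi|^{-1-b} t^b\, dt$ and $1-b = 2\alpha$, the $y$-integral factors out the constant $\int_0^\infty t^b(\phi^2 + (\phi')^2)\,dt =: c_{n,\alpha}^{-1}$ and leaves the weight $|\xi|^{2\alpha}$, yielding \eqref{e:en_of_ext-CS}.

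The Neumann-type formula (a) comes from the same representation. On the Fourier side $y^b\partial_y\widehat{\theta^*}(\xi,y) = |\xi|^{2\alpha}\widehat\theta(\xi)\, t^b\phi'(t)$ with $t = |\xi|y$, and the origin expansion $\phi(t) = 1 - c_\alpha t^{2\alpha} + o(t^{2\alpha})$ gives $t^b\phi'(t) \to -2\alpha\, c_\alpha$ as $t \to 0$ (since $b + 2\alpha - 1 = 0$). Hence $\lim_{y\to 0} y^b\partial_y\widehat{\theta^*}(\xi,y) = -2\alpha\, c_\alpha\,|\xi|^{2\alpha}\widehat\theta(\xi)$, and inverting the Fourier transform produces \eqref{eqn:frac-lap-est-CS} with $c_{n,\alpha} = (2\alpha\, c_\alpha)^{-1}$; consistency of this constant with the one emerging from (b) can be checked by an integration by parts in $t$ against the profile equation.

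I expect the main obstacle to be the boundary analysis underlying (a): selecting the decaying branch of the modified Bessel equation and extracting the precise short-distance asymptotics $\phi(t) = 1 - c_\alpha t^{2\alpha} + o(t^{2\alpha})$ at the regular singular point $t=0$, where the two indicial exponents $0$ and $2\alpha$ compete, together with justifying the interchange of the limit $y \to 0$ with the Fourier inversion. A second, more technical point is the trace theory for the $A_2$-weighted space $H^1(\R^{n+1}_+, y^b)$ — namely that its trace space is exactly $H^\alpha(\R^n)$ — on which the variational existence, uniqueness and minimality argument rests.
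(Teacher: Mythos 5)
The paper does not prove this theorem: it is imported verbatim from Caffarelli--Silvestre \cite{CaffarelliSilvestre2007}, so there is no internal proof to compare against. Your sketch is precisely the standard argument of that reference --- the variational/trace-space construction giving existence, uniqueness, the Euler--Lagrange equation and (c), and the Fourier--Bessel profile $\phi(t)=\tfrac{2^{1-\alpha}}{\Gamma(\alpha)}t^{\alpha}K_{\alpha}(t)$ giving (a) and (b) --- and the outline, including the indicial analysis at $t=0$ and the change of variables $t=|\xi|y$, is correct. The one slip is a sign in (a): since $\phi(t)=1-c_{\alpha}t^{2\alpha}+o(t^{2\alpha})$ with $c_{\alpha}>0$, one gets $\lim_{y\to 0}y^{b}\partial_{y}\theta^{*}=-2\alpha c_{\alpha}\,(-\Delta)^{\alpha}\theta$, so the constant in \eqref{eqn:frac-lap-est-CS} is $-(2\alpha c_{\alpha})^{-1}$ rather than $(2\alpha c_{\alpha})^{-1}$; your own proposed consistency check (integrating by parts against the profile equation) would in fact show that the constants in (a) and (b) necessarily differ by a sign, an imprecision already present in the quoted statement.
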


{\new \subsection{Poincar\'e inequalities}
Let $\alpha \in (0, 1)$, $1\leq p < \frac{n}{\alpha}$ and  ${p^*} := \frac{pn}{n-p \alpha }$. There exists a universal constant $C=C(\alpha, n, p)$ such that for every $f \in W^{\alpha, p}(\R^n)$, $q \in [p, p^*]\,,$ $x \in \R^n$ and  $r>0$
\begin{equation}\label{eq:PoincareStd}
\left(\int_{B_{r}(x)} \lvert f (z) - [f]_{B_{r}(x)} \rvert^q \, dz \right)^\frac{1}{q} \leq C r^{\alpha - n (\frac 1p - \frac 1q)} [f]_{W^{\alpha, p}(B_{r}(x))} \, .
\end{equation}
We will also need a weighted Poincar\'e inequality in the spirit of the classical work \cite{FabesKenigSerapioni1982} for $\alpha=1$ (where on the other side much more general weights 
are admissible). 
Let $\omega\in C^\infty_{c}(\R^n)$ be a radial, non-increasing weight such that $\omega \equiv 1$ on $\overline{B_{r/2}}(x)$, $\omega \equiv 0$ outside $B_{r}(x)\,$ and $|\nabla \omega |\leq \frac C r$ pointwise. We introduce the weighted average
\begin{equation}
[f]_{\omega, B_{r}(x)}:= \bigg( \int_{B_{r}(x)} \omega(z) \, dz \bigg)^{-1} \int_{B_{r}(x)} f(z) \omega (z) \, dz \, .
\end{equation}
The following weighted Poincar\'e inequality is  classical for $\alpha=1$ (see \cite[Lemma 6.12]{Lieberman1996}) and it is established for $q=p$ in \cite[Proposition 4]{DydaKassmann2013}: Their proof extends to the other endpoint $q= p^*\,$ and hence to the range $q \in [p,  p^*]$ by interpolation.
\begin{lemma} Under the above assumptions, we have the weighted Poincar\'e inequality
 \begin{equation}\label{eq:PoincareWgt}
\bigg(\int_{B_{r}(x)} \lvert f(z) - [f]_{\omega, B_{r}(x)} \rvert^q \omega(z) \, dz \bigg)^\frac{1}{q} \leq \tilde C  r^{\alpha - n (\frac 1p - \frac 1q)} [f]_{W^{\alpha, p}(B_{r}(x))}
\end{equation}
where $\tilde C=2^{2-\alpha +n/p } C \, .$
\end{lemma}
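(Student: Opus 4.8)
The plan is to establish the two endpoints $q=p$ and $q=p^*$ separately and then recover the full range $q\in[p,p^*]$ by interpolating $L^q$-norms with respect to the finite measure $\omega\,dz$ on $B_r(x)$; the decisive point is that in both endpoints one subtracts the \emph{same} weighted mean $[f]_{\omega,B_r(x)}$, so that the interpolation applies verbatim. Without loss of generality we take $x=0$ and abbreviate $[f]_{\omega,B_r(0)}=[f]_{\omega,B_r}$.

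For the endpoint $q=p$ I would reproduce the argument of \cite{DydaKassmann2013}. Writing $f(z)-[f]_{\omega,B_r}=\big(\int_{B_r}\omega\big)^{-1}\int_{B_r}(f(z)-f(z'))\,\omega(z')\,dz'$ and applying Jensen's inequality in the probability measure $\omega\,dz'/\int_{B_r}\omega$ yields
\[
\int_{B_r}\abs{f(z)-[f]_{\omega,B_r}}^p\,\omega(z)\,dz\leq\frac{1}{\int_{B_r}\omega}\int_{B_r}\int_{B_r}\abs{f(z)-f(z')}^p\,\omega(z)\,\omega(z')\,dz\,dz'.
\]
Using $\omega\leq1$, the bound $\abs{z-z'}\leq2r$ to insert the factor $\abs{z-z'}^{n+\alpha p}\leq(2r)^{n+\alpha p}$, and $\int_{B_r}\omega\geq\abs{B_{r/2}}$ (since $\omega\equiv1$ on $B_{r/2}$), the double integral is controlled by $(2r)^{n+\alpha p}\abs{B_{r/2}}^{-1}[f]_{W^{\alpha,p}(B_r)}^p$, which gives \eqref{eq:PoincareWgt} for $q=p$ with the stated constant $\tilde C$ after tracking the powers of $2$.

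The endpoint $q=p^*$ is the crux, since Jensen alone cannot produce the Sobolev gain; I would instead reduce to the unweighted Sobolev--Poincar\'e inequality \eqref{eq:PoincareStd}. Using $\omega\leq1$ to dominate the weighted $L^{p^*}(\omega)$-norm by the plain $L^{p^*}$-norm and then exchanging the weighted mean for the plain mean $[f]_{B_r}$, I bound
\[
\Big(\int_{B_r}\abs{f-[f]_{\omega,B_r}}^{p^*}\omega\Big)^{1/p^*}\leq\Big(\int_{B_r}\abs{f-[f]_{B_r}}^{p^*}\Big)^{1/p^*}+\abs{[f]_{B_r}-[f]_{\omega,B_r}}\,\abs{B_r}^{1/p^*}.
\]
The first term is exactly \eqref{eq:PoincareStd} at $q=p^*$, whose scaling exponent $\alpha-n(1/p-1/p^*)$ vanishes because $1/p^*=1/p-\alpha/n$. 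For the second term I would use $\abs{[f]_{B_r}-[f]_{\omega,B_r}}=\big(\int_{B_r}\omega\big)^{-1}\big|\int_{B_r}(f-[f]_{B_r})\,\omega\big|$, then $\omega\leq1$, H\"older, \eqref{eq:PoincareStd} at $q=p$, and $\int_{B_r}\omega\geq\abs{B_{r/2}}$ to obtain $\abs{[f]_{B_r}-[f]_{\omega,B_r}}\leq C\,r^{\alpha-n/p}[f]_{W^{\alpha,p}(B_r)}$; multiplying by $\abs{B_r}^{1/p^*}\sim r^{n/p^*}$ reproduces the scaling exponent $\alpha-n/p+n/p^*=0$. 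This is the delicate step: one must verify that the comparison of the weighted and unweighted means carries the correct power of $r$ and a constant independent of $r$.

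Finally, for $q\in(p,p^*)$ I would write $\tfrac1q=\tfrac{1-\vartheta}{p}+\tfrac{\vartheta}{p^*}$ and apply log-convexity of $L^q(\omega\,dz)$-norms to $g:=f-[f]_{\omega,B_r}$, giving $\norm{g}_{L^q(\omega)}\leq\norm{g}_{L^p(\omega)}^{1-\vartheta}\norm{g}_{L^{p^*}(\omega)}^{\vartheta}$. Inserting the two endpoint bounds produces the exponent $\alpha(1-\vartheta)$, and since $n(1/p-1/q)=\vartheta\,n(1/p-1/p^*)=\vartheta\alpha$ one has $\alpha(1-\vartheta)=\alpha-n(1/p-1/q)$, which is precisely the exponent in \eqref{eq:PoincareWgt}. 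The main obstacle throughout is the $q=p^*$ endpoint together with the scaling bookkeeping in the average-comparison step; once this is secured, the $q=p$ case and the interpolation are routine.
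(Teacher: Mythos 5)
Your proposal is correct and realizes exactly the strategy the paper itself only sketches: the $q=p$ endpoint is the Jensen argument of \cite{DydaKassmann2013}, the $q=p^*$ endpoint is obtained from the unweighted inequality \eqref{eq:PoincareStd} (your mean-comparison term carries the right power of $r$ since $\alpha-n/p+n/p^*=0$), and the intermediate range follows by log-convexity of $L^q(\omega\,dz)$-norms applied to the single function $f-[f]_{\omega,B_r(x)}$, with the exponent bookkeeping $\alpha(1-\vartheta)=\alpha-n(\tfrac1p-\tfrac1q)$ checking out. No gaps.
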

In the case $p=2$, we can rewrite the right-hand side of \eqref{eq:PoincareStd} and \eqref{eq:PoincareWgt} in terms of the extension as follows.
}
\begin{lemma}\label{lem:Sobolev} Let $n \geq2$, $\alpha \in (0,1)\,$, $0<r<s\,,$ {\cor$f\in  W^{\alpha, 2}(\R^n)$ and $g \in C^1(\R)$}. 
Then there exists $C=C(n, \alpha)$ such that 
\begin{equation}\label{est:boh}
[g \circ f]_{W^{\alpha, 2}(B_r)} \leq C \bigg( \int_{B_s^\ast} y^b \lvert \overline{\nabla} [g(f^\ast)] \rvert^2 \, dx \, dy +(s-r)^{-2} \int_{B_s^\ast\setminus B_r^\ast} y^b( g(f^\ast) )^2 \, dx \, dy  \bigg)^\frac{1}{2}
\end{equation}
and for any $2 \leq q \leq \frac{2n}{n-2\alpha}$
\begin{equation} \label{eq: localsobolevineq} 
\norm{g \circ f}_{L^q(B_r)} \leq C \bigg(\int_{B_s^\ast} y^b \lvert \overline{\nabla}  [g(f^\ast)]  \rvert^2\, dx \, dy +(s-r)^{-2} \int_{B_s^\ast\setminus B_r^\ast} y^b(  g(f^\ast) )^2 \, dx \, dy + \int_{B_r}  f^2 \,dx\bigg)^\frac{1}{2} \, .
\end{equation}
{\new In particular, for any $x \in \R^n\,$ 
\begin{equation}
\label{eq:GagliardoToExt}
[f]_{W^{\alpha,2}(B_r(x))} \leq C \bigg (\int_0^{\frac 43 r} \int_{B_{\frac 43 r}(x)} y^b \lvert \overline \nabla f^* \rvert^2 (z,y) \, dz \, dy \bigg)^\frac{1}{2} \, .
\end{equation}}
\end{lemma}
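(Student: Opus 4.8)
The plan is to deduce all three inequalities from the two Poincaré inequalities \eqref{eq:PoincareStd} and \eqref{eq:PoincareWgt} stated just above, combined with the energy-minimality property \eqref{e:minimum_ext_caff} of the Caffarelli--Silvestre extension. The key conceptual point is that the Gagliardo seminorm of a function on a ball is controlled by the weighted Dirichlet energy of \emph{any} extension, so one is free to pick a convenient (non-harmonic) extension built from the given harmonic one by cutting off in the $y$-variable.

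For the first estimate \eqref{est:boh}, I would apply the weighted Poincaré inequality \eqref{eq:PoincareWgt} with $p=q=2$ to the function $h := g\circ f$ on $B_r$, so that $[g\circ f]_{W^{\alpha,2}(B_r)} \le \tilde C\, [h]_{W^{\alpha,2}(B_r)}$ after first observing that subtracting the weighted mean $[h]_{\omega,B_r}$ does not change the Gagliardo seminorm. The right-hand side of \eqref{eq:PoincareWgt} is the full-space-type seminorm, which by \eqref{eq:GagliardoToExt}-type reasoning (i.e. the norm equivalence $[h]_{W^{\alpha,2}(\R^n)}^2 \simeq \int_{\R^{n+1}_+} y^b |\overline\nabla h^*|^2$ from Theorem \ref{thm:CF}(b)) is comparable to the weighted Dirichlet energy of the harmonic extension $h^*$. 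Here is where minimality enters: rather than use $h^*$ itself, I construct a competitor extension $\eta(x,y) := \phi(y)\, g(f^*(x,y))$, where $\phi$ is a Lipschitz cutoff equal to $1$ on $[0,r)$ and vanishing past $s$ with $|\phi'| \lesssim (s-r)^{-1}$; since $\eta$ is an extension of $h$, \eqref{e:minimum_ext_caff} gives $\int y^b|\overline\nabla h^*|^2 \le \int y^b |\overline\nabla \eta|^2$. Expanding $\overline\nabla \eta = \phi\,\overline\nabla[g(f^*)] + g(f^*)\phi' e_y$ and using $|\phi'|\lesssim (s-r)^{-1}$ supported on $B_s^*\setminus B_r^*$ produces exactly the two terms on the right-hand side of \eqref{est:boh}.

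The second estimate \eqref{eq: localsobolevineq} follows the same template but uses the Sobolev--Poincaré inequality \eqref{eq:PoincareStd} with $p=2$, $q\in[2,\frac{2n}{n-2\alpha}]$, applied to $g\circ f$ on $B_r$; the subtracted average $[g\circ f]_{B_r}$ must be reinstated, which is why the extra term $\int_{B_r} f^2\,dx$ (controlling $\|g\circ f\|_{L^2(B_r)}$, and hence the contribution of the mean via H\"older on the bounded ball) appears in the bound — one passes from the oscillation estimate to a genuine $L^q$-norm estimate by the triangle inequality $\|g\circ f\|_{L^q} \le \|g\circ f - [g\circ f]_{B_r}\|_{L^q} + |B_r|^{1/q}|[g\circ f]_{B_r}|$. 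Finally, \eqref{eq:GagliardoToExt} is the special case $g=\mathrm{id}$, $s=\tfrac43 r$, where the second term of \eqref{est:boh} is absorbed by enlarging the extension energy integral over $B_{4r/3}^*$; concretely one chooses overlapping radii and uses that on $B_s^*\setminus B_r^*$ the $L^2$ term can itself be bounded by the Dirichlet energy via a one-dimensional trace/Poincaré argument in $y$, or one simply rescales the cutoff construction so that both terms merge into the single integral over $B_{4r/3}^*$.

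The main obstacle is the careful verification that the competitor $\eta = \phi(y)\,g(f^*)$ genuinely lies in $H^1(\R^{n+1}_+, y^b)$ and is an admissible extension of $g\circ f$, so that \eqref{e:minimum_ext_caff} may be invoked — this requires the chain rule $\overline\nabla[g(f^*)] = g'(f^*)\overline\nabla f^*$ to hold in the weighted Sobolev sense (using $g\in C^1$ and, for the energy bound, controlling $g'(f^*)$, which is where the hypothesis $g\in C^1(\R)$ is essential) and the product $\phi\, g(f^*)$ to retain the correct trace $g(f)$ at $y=0$. The remaining steps — tracking the constants through the two Poincaré inequalities and the cutoff expansion, and the interpolation justifying the range $q\in[2,\frac{2n}{n-2\alpha}]$ — are routine.
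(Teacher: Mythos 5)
Your overall strategy (energy identity plus minimality of the Caffarelli--Silvestre extension against a cut-off competitor) is the right one and is indeed what the paper does, but the way you set up the cutoff leaves a genuine gap. Your competitor $\eta(x,y)=\phi(y)\,g(f^*(x,y))$ is cut off \emph{only in the $y$-variable}, so it is an extension of $g\circ f$ on all of $\R^n$, and the minimality inequality \eqref{e:minimum_ext_caff} then compares the \emph{global} energy of $(g\circ f)^*$ with $\int_{\R^n\times[0,s)} y^b\phi^2|\overline\nabla[g(f^*)]|^2+\int_{\R^n\times(r,s)}y^b(\phi')^2 g(f^*)^2$. These are integrals over infinite slabs in $x$, not over $B_s^*$ and $B_s^*\setminus B_r^*$, so you do not obtain the right-hand side of \eqref{est:boh} (your claim that $\phi'$ is "supported on $B_s^*\setminus B_r^*$" is incorrect for a $y$-only cutoff). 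Worse, the chain you need on the left — local seminorm on $B_r$ $\leq$ full-space seminorm of $g\circ f$ $=$ global energy of $(g\circ f)^*$ — requires $g\circ f\in W^{\alpha,2}(\R^n)$ globally, which is not available: $g\in C^1$ need not be globally Lipschitz or vanish at $0$, so $g\circ f$ may fail to be in $L^2(\R^n)$ or to have finite global Gagliardo seminorm. The paper avoids both problems by cutting off in \emph{both} variables: it takes $\varphi\in C^\infty_c(\R^{n+1}_+)$ with $\varphi\equiv 1$ on $B_r^*$ and $\supp\varphi\subseteq B_s^*$, observes that $[g\circ f]_{W^{\alpha,2}(B_r)}$ is bounded by the full-space seminorm of the compactly supported trace $(g\circ f)\varphi|_{y=0}$ (since $\varphi|_{y=0}\equiv 1$ on $B_r$), and only then applies the energy identity and minimality with competitor $g(f^*)\varphi$, whose energy genuinely localizes to $B_s^*$. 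Your preliminary appeal to the weighted Poincar\'e inequality \eqref{eq:PoincareWgt} for \eqref{est:boh} is also backwards — it bounds an $L^q$ oscillation by the Gagliardo seminorm, not the seminorm by anything — though this step is vacuous rather than fatal since $h=g\circ f$.

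Two smaller points. For \eqref{eq:GagliardoToExt} your sketch omits the key device: one must apply \eqref{est:boh} to $f-c$ for a suitable (weighted) constant $c$, and then absorb the annulus term via the weighted Poincar\'e inequality of Fabes--Kenig--Serapioni for the $A_2$ weight $y^b$ on the half-space cylinder; a "one-dimensional trace/Poincar\'e argument in $y$" without subtracting a constant cannot work, since $f^*$ need not be small anywhere. For \eqref{eq: localsobolevineq}, your claim that $\int_{B_r}f^2$ controls $\|g\circ f\|_{L^2(B_r)}$ (and hence the mean $[g\circ f]_{B_r}$) is false in general — take $g$ constant and $f\equiv 0$ — so the passage from the oscillation estimate to the $L^q$-norm estimate needs a different justification; the paper deduces \eqref{eq: localsobolevineq} from \eqref{est:boh} by Sobolev embedding and interpolation.
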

\begin{proof}
Let $n \geq 2$, $\alpha\in (0,1)$, $0<r<s$ and $g \in C^1(\R)$. By approximation we may assume that  $f$ is Schwartz. Fix a smooth cut-off function $\varphi \in C^\infty_c(\mathbb{R}^{n+1}_+)$ such that $0 \leq \varphi \leq 1$, $\varphi \equiv 1$ on $B_r^\ast$, $\supp \varphi \subseteq B_s^\ast$ and $\lvert \overline{\nabla} \varphi \rvert \leq C (s-r)^{-1}\,.$
For $\alpha \in (0,1)$ we use the minimizing property of the extension to write
\begin{align}\label{eq: Gagliardonorm with ext}
[g \circ f]_{W^{\alpha,2}(B_r)}^2 &= \int_{B_r} \int_{B_r} \frac{\lvert g(f(x))- g(f(z)) \rvert^2}{\lvert x-z\rvert^{n+2\alpha}} \, dx \, dz
\\& \leq \int_{\mathbb{R}^n}\int_{\mathbb{R}^n} \frac{\lvert ((g\circ f) \varphi|_{y=0}) (x) - ((g\circ f)\varphi|_{y=0})(z) \rvert^2}{\lvert x-z \rvert^{n+2\alpha}} \,dx \, dz \nonumber \\ 
&= c_{n,\alpha} \int_{\mathbb{R}^{n+1}_+} y^b \lvert \overline{\nabla}((g \circ f)\varphi|_{y=0})^\ast \rvert^2 \, dx \, dy \leq  c_{n,\alpha} \int_{\mathbb{R}^{n+1}_+} y^b \lvert \overline{\nabla}(g(f^\ast)\varphi) \rvert^2 \, dx \, dy  \nonumber \\
&\leq 2c_{n,\alpha} \int_{\mathbb{R}^{n+1}_+} y^b \lvert \overline{\nabla} [g(f^\ast )]\rvert^2 \varphi^2 \, dx \, dy+ 2c_{n,\alpha} \int_{\mathbb{R}^{n+1}_+} y^b( g(f^\ast) )^2 \lvert \overline{\nabla} \varphi \rvert^2 \, dx \, dy\nonumber \
\end{align}
and thus \eqref{est:boh} follows.
The estimate \eqref{eq: localsobolevineq} follows from \eqref{est:boh} via Sobolev embedding and interpolation. {\new
 As for \eqref{eq:GagliardoToExt}\,, we may assume $x=0$ and denoting by $c$ the weighted average of $f^*$ with respect to the weight $y^b$ on $B_{4r/3} \times [0, 4r/3]$, we have by the weighted Poincar\'e inequality of \cite{FabesKenigSerapioni1982} (with the Muckenhoupt weight $\omega(x,y)= y^b\in A_2$ on $\R^3_+$)
\begin{equation}\label{est:finiamola}
r^{-2} \int_0^{\frac{4}{3}r}\int_{B_{\frac{4}{3}r}} y^b \lvert f^* - c \rvert^2 \, dx \, dy \lesssim \int_0^{\frac{4}{3}r}\int_{B_{\frac{4}{3}r}} y^b \lvert \overline{\nabla} f^* \rvert^2 \, dx \, dy \,.
\end{equation}
The estimate \eqref{eq:GagliardoToExt} follows then from \eqref{est:boh} (applied to $f-c$ and $s=4r/3$) and \eqref{est:finiamola}.
}
\end{proof}
{\new
\subsection{(Sharp) maximal function}\label{sec:maximal}
For a function $f: \R^2 \times [0, \infty) \to \R\,$ we introduce the maximal function (in space)
\begin{equation}
\mathcal{M}f(x,t) := \sup_{r >0} \, \mean{B_r(x)} \lvert f(z, t) \rvert \, dz
\end{equation} 
as well as the sharp fractional maximal function (in space)
\begin{equation}\label{eq:sharpfractionalmax}
f^\#_{\alpha} (x,t):= \sup_{r>0} r^{-\alpha} \mean{B_r(x)} \lvert f(z,t) - [f(t)]_{B_r(x)} \rvert \, dz \,
\end{equation}
and for $q> \sqrt 2$ the following variant
\begin{equation}\label{defn:fsharpq}
f^\#_{\alpha, q} (x,t):= \sup_{r>0} r^{-2\alpha (1- 1/q^2)} \mean{B_r(x)} \lvert f(z,t) - [f(t)]_{B_r(x)} \rvert^{2(1-1/q^2)} \, dz \,.
\end{equation}
In order to use a spacetime integral of $\mathcal{\theta}^\#_{\alpha, q}$ in a covering argument, we need to know that it is globally controlled; to guarantee the latter, we are forced to choose $q< \infty$.
\begin{lemma}\label{lem:sharpmax} Let $\alpha \in (0,1)\,,$ $f \in W^{\alpha, 2}(\R^n) \,$ and $ q \in (\sqrt 2, \infty) \,.$ Then there exists a constant $C=C(n,q) \geq 1$ (which is uniformly bounded for $q$ bounded away from $\infty$) such that
\begin{equation}
\norm{f^\#_{\alpha}}_{L^2(\R^n)}^2 + \norm{f^{\#}_{\alpha, q} }_{L^{1+1/(q^2-1)}(\R^n)}^{1+1/(q^2-1)} \leq C [f]_{W^{\alpha,2}(\R^n)}^2 \, .
\end{equation}
\end{lemma}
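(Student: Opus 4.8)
The plan is to dominate both sharp maximal functions pointwise by a Hardy--Littlewood maximal function of a suitable power of the fractional ``gradient''
\[
D_\alpha f(x) := \left(\int_{\R^n} \frac{\lvert f(x)-f(z)\rvert^2}{\lvert x-z\rvert^{n+2\alpha}}\, dz\right)^{1/2},
\]
whose square integrates exactly to the Gagliardo seminorm, $\norm{D_\alpha f}_{L^2(\R^n)}^2 = [f]_{W^{\alpha,2}(\R^n)}^2$, by Fubini. Since the estimate is purely spatial, I would treat $f$ as a function on $\R^n$ and suppress the frozen time variable, and I abbreviate $[f]_{B_r(x)} := \mean{B_r(x)} f$.

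The heart of the matter is the pointwise bound
\[
\mean{B_r(x)} \lvert f(z)-f(w)\rvert\, dw \lesssim r^\alpha D_\alpha f(z) \qquad \text{for all } z \in B_r(x),
\]
which I would obtain by Cauchy--Schwarz, splitting the kernel as $\lvert z-w\rvert^{-(n+2\alpha)/2}\cdot \lvert z-w\rvert^{(n+2\alpha)/2}$: the first factor integrates against $\lvert f(z)-f(w)\rvert^2$ to at most $(D_\alpha f(z))^2$, while $\int_{B_r(x)} \lvert z-w\rvert^{n+2\alpha}\, dw \lesssim r^{2n+2\alpha}$ because $B_r(x) \subset B_{2r}(z)$; dividing by $\lvert B_r\rvert \simeq r^n$ gives the claim. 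Combining this with Jensen's inequality $\lvert f(z)-[f]_{B_r(x)}\rvert \leq \mean{B_r(x)} \lvert f(z)-f(w)\rvert\, dw$ then yields, for every exponent $\gamma>0$, the oscillation estimate
\[
r^{-\alpha\gamma}\mean{B_r(x)} \lvert f(z)-[f]_{B_r(x)}\rvert^\gamma\, dz \lesssim \mean{B_r(x)} (D_\alpha f(z))^\gamma\, dz \leq \mathcal{M}\big((D_\alpha f)^\gamma\big)(x).
\]

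Applying this with $\gamma=1$ gives $f^\#_\alpha(x) \lesssim \mathcal{M}(D_\alpha f)(x)$, so the $L^2$-boundedness of the Hardy--Littlewood maximal operator yields $\norm{f^\#_\alpha}_{L^2}^2 \lesssim \norm{D_\alpha f}_{L^2}^2 = [f]_{W^{\alpha,2}(\R^n)}^2$. For the second term I set $\gamma = \beta := 2(1-1/q^2)$ and $p_0 := \tfrac{q^2}{q^2-1} = 1+\tfrac{1}{q^2-1}$, which are conjugate in the sense that $\beta p_0 = 2$; taking the supremum over $r$ gives $f^\#_{\alpha,q}(x) \lesssim \mathcal{M}((D_\alpha f)^\beta)(x)$, and since $q>\sqrt 2$ forces $\beta \in (1,2)$ and hence $p_0 \in (1,2)$, the $L^{p_0}$-boundedness of $\mathcal{M}$ gives
\[
\norm{f^\#_{\alpha,q}}_{L^{p_0}}^{p_0} \lesssim \norm{(D_\alpha f)^\beta}_{L^{p_0}}^{p_0} = \int_{\R^n} (D_\alpha f)^{\beta p_0}\, dx = \int_{\R^n}(D_\alpha f)^2\, dx = [f]_{W^{\alpha,2}(\R^n)}^2.
\]
Summing the two estimates closes the proof.

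The only genuinely delicate point is the bookkeeping of exponents so that the maximal operator lands in the correct Lebesgue space: everything hinges on the relation $\beta p_0 = 2$ together with $p_0 > 1$. The strict inequality $p_0>1$ --- equivalently $\beta<2$, equivalently $q<\infty$ --- is exactly what fails in the limit $q\to\infty$, where the operator norm of $\mathcal M$ on $L^{p_0}(\R^n)$ degenerates like $(p_0-1)^{-1}$; this is the quantitative source of the stated $C=C(n,q)$ being uniformly bounded only for $q$ bounded away from $\infty$, and the structural reason (noted in the text) why the covering argument must be run with finite $q$. All remaining steps --- Fubini for the seminorm identity, the Cauchy--Schwarz split, Jensen's inequality, and maximal-function boundedness --- are entirely standard.
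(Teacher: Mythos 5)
Your proposal is correct and follows essentially the same route as the paper's proof: a Cauchy--Schwarz/Jensen estimate dominating the oscillation on $B_r(x)$ pointwise by $r^\alpha D_{\alpha,2}f$, hence $f^\#_{\alpha,q}\lesssim \mathcal M\big((D_{\alpha,2}f)^{2(1-1/q^2)}\big)$, followed by the $L^{p_0}$-boundedness of the maximal function with the exponent identity $2(1-1/q^2)\cdot(1+1/(q^2-1))=2$. Your remark on the degeneration of the maximal-function constant as $p_0\to 1$ (i.e.\ $q\to\infty$) correctly accounts for the stated uniformity of $C(n,q)$.
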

For $\alpha=1\,,$ the equivalent of Lemma \ref{lem:sharpmax} is a simple consequence of the Poincar\'e inequality and the maximal function estimate. Indeed, by Poincar\'e we have almost everywhere the pointwise estimate
$$f^\#_1(x) \lesssim \mathcal{M}(\lvert \nabla f \rvert)(x) \qquad f^\#_{1,q}(x) \lesssim  \mathcal{M}\big(\lvert \nabla f \rvert^{2(1-1/q^2)}\big)(x)$$ 
for $f \in W^{1,1}_{loc}$ and  $f \in W^{1,2 (1-1/q^2) }_{loc}$ respectively. Integrating in $x$ and using the boundedness of the maximal function on $L^2$ and $L^{1+1/(q^2 -1)}\,,$ we obtain the equivalent of Lemma \ref{lem:sharpmax}.
\begin{proof} We give the proof for $f^\#_{\alpha,q}$
. We estimate the quantity in the supremum in \eqref{defn:fsharpq} 
\begin{align}\label{eq:blu}
 \mean{B_r(x)} \bigg\lvert \, \mean{B_r(x)} \frac{f(z) - f(y)}{r^{\alpha}} \, dy \bigg\rvert^{2(1-1/q^2)} \, dz
&\leq \mean{B_r(x)} \bigg(\, \mean{B_r(x)} \frac{|f(z) - f(y)|^2}{r^{2\alpha}} \, dy \bigg)^{1-1/q^2} \, dz
 \nonumber\\
&\leq  C \mean{B_r(x)} \bigg( \, \int_{B_r(x)} \frac{|f(z) - f(y) |^2}{\lvert z-y \rvert^{n+2\alpha}}\, dy \bigg)^{1-1/q^2} \, dz
\nonumber\\
&\leq  C \, \mean{B_r(x)} \left( D_{\alpha, 2} \, f(z) \right)^{2(1-1/q^2)} \, dz\,,
\end{align}
where $D_{\alpha, 2} \, f$ is the $n$-dimensional version of \eqref{eqn:Dalpha2}, i.e. for $z \in \R^n$
\begin{equation}
(D_{\alpha, 2} \, f) (z):=\left( \int_{\R^n} \frac{\lvert f(z)-f(z') \rvert^2}{\lvert z-z' \rvert^{n+2\alpha}} \, dz' \right)^{\frac{1}{2}}\,.
\end{equation}
By taking the supremum over $r>0\,,$ we deduce from \eqref{eq:blu} that for almost every $x$
\begin{equation}\label{eq:pointwiseestforsharpmaximalfunction}
f^\#_{\alpha, q} (x) \leq C \mathcal{M}\big((D_{\alpha,2} \, f)^{2(1-1/q^2)}\big)(x)
\end{equation}
and hence by the maximal function estimate on $L^{1 + {1}/{(q^2-1)}}$
\begin{equation*}
\norm{f^\#_{\alpha, q} }^{1+1/(q^2-1)}_{L^{1 + {1}/{(q^2-1)}}(\R^n)} \leq C \norm{(D_{\alpha,2} \, f)^{2(1-1/q^2)}}^{1+1/(q^2-1)}_{L^{1 + {1}/{(q^2-1)}}(\R^n)} = C \norm{D_{\alpha,2}\, f}^2_{L^{2}(\R^n)} = C[f]^2_{W^{\alpha,2}(\R^n)} \, .
\end{equation*}
\end{proof}
}
\section{The local energy inequalities}\label{s:energy}
\subsection{Leray-Hopf weak solutions}\label{s:LH} We recall the notion of Leray--Hopf weak solutions.
\begin{definition}\label{def:LH} Let $\theta_0\in L^2 (\R^2)$. A pair $(\theta,u)$ is a Leray--Hopf weak solution of \eqref{eq:SQG}--\eqref{eq:Cauchy}  on $\mathbb R^2\times (0,T)$ if:
\begin{itemize}
\item[(a)] $\theta\in L^\infty ((0,T), L^2 (\R^2)) \cap L^2 ((0,T), W^{\alpha,2} (\R^2))$;
\item[(b)] $\theta$ solves \eqref{eq:SQG}--\eqref{eq:Cauchy} in the sense of distributions, namely $\div u= 0$ and 
\begin{equation}
\label{eqn:weak}
\int \Big(\partial_t \varphi  \theta + u \theta \cdot \nabla \phi - (-\Delta)^\alpha \varphi  \theta \Big)\, dx\, dt 
= - \int \theta_0 (x) \cdot \varphi (0,x)\, dx
\end{equation}
for any $\varphi\in C^\infty_c (\mathbb R^2\times \mathbb R)$.
\item[(c)] The following inequalities hold for every $t\in (0,T)$ and for almost every $s\in (0,T)$ and every $t \in (s,T)$ respectively:
\begin{align}
\frac{1}{2} \int \theta^2 (x,t)\, dx + \int_0^t \int |(-\Delta)^{\frac{\alpha}{2}} \theta|^2 (x,\tau)\, dx\, d\tau &\leq \frac{1}{2} \int |\theta_0|^2 (x)\, dx
\label{e:g_energy_1}\\
\frac{1}{2} \int |\theta|^2 (x,t)\, dx + \int_s^t \int |(-\Delta)^{\frac{\alpha}{2}} \theta|^2 (x,\tau)\, dx\, d\tau &\leq \frac{1}{2} \int |\theta|^2 (x,s)\, dx
\label{e:g_energy_2} 
\end{align}
\end{itemize}
Correspondingly, we say that $\theta$ is a Leray--Hopf weak solution of \eqref{eq:SQG}--\eqref{eq:u} if additionally \eqref{eq:u} holds. 
\end{definition}
Observe that 
from the weak formulation \eqref{eqn:weak} it follows that for all $\varphi \in C^\infty_c(\R^3_+ \times [0, T))$
\begin{align}\label{eq:weakform3}
\int &\theta(x,t) \varphi(x,0,t) \, dx - \int \theta(x,s) \varphi(x,0,s) \, dx 
\nonumber \\&= \int_s^t \int \Big( \theta \partial_\tau \varphi|_{y=0} + (u\theta) \cdot \nabla \varphi|_{y=0} - \theta (-\Delta)^\alpha \varphi|_{y=0} \Big) \, dx \, d\tau
\nonumber \\&= \int_s^t \int \Big( \theta \partial_\tau \varphi|_{y=0} + (u\theta) \cdot \nabla \varphi|_{y=0} \Big) \, dx \, d \tau  
- c_{\alpha} \int_s^t \int_{\R^3_+} y^b  \overline{\nabla} \theta^* \cdot \overline{\nabla} \varphi \, dx \, dy \, d\tau\,
\end{align}
for $s=0$ and almost every $t\in (0,T)$ and for almost every $0<s<t<T\,$ (with $c_\alpha$ given by Theorem \ref{thm:CF}). 
Indeed, the equality between the last term of the second line and the last term of \eqref{eq:weakform3} holds for every $\theta \in L^2((0,T),W^{\alpha,2}(\R^2))$; in the smooth case this equality is a consequence of Theorem \ref{thm:CF} which one recovers for general $\theta$ through regularization. 

We recall that any Leray-Hopf weak solution is actually in $L^\infty$ for $t>0$. 
\begin{theorem}[\cite{ConstantinWu2009} Theorem 2.1]\label{thm:LerayConstWu} Let $\theta_0 \in L^2(\R^2)$ and let $(\theta, u)$ be a Leray-Hopf weak solution of \eqref{eq:SQG}--\eqref{eq:Cauchy}. Then there exist a universal constant, independent on $u$, such that for any $t>0\,$
\begin{equation}\label{eq:thetaLinfty}
\sup_{x \in \R^2} \lvert \theta(x,t) \rvert \leq C\norm{\theta_0}_{L^2} t^{-\frac{1}{2\alpha}}\, .
\end{equation}
In the particular case \eqref{eq:u}, where $u= \mathcal{R}^\perp \theta$, we obtain as a consequence that for any $t>0$
\begin{equation}\label{eq:uBMO}
\norm{u(\cdot, t)}_{{BMO}(\R^2)} \leq C \norm{\theta_0}_{L^2}t^{-\frac{1}{2\alpha}}\, .
\end{equation}
\end{theorem}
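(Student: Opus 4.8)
The plan is to prove the pointwise bound by a De Giorgi level-set iteration, exploiting the one structural feature that makes the estimate universal: since $\div u = 0$, the drift term disappears from every energy estimate for convex truncations of $\theta$, so the decay is governed purely by the fractional dissipation. In particular the rate $t^{-1/(2\alpha)}$ is exactly the one of the linear semigroup $e^{-\tau(-\Delta)^\alpha}$ acting $L^2(\R^2)\to L^\infty(\R^2)$, whose kernel has size $\tau^{-n/(2\alpha)}=\tau^{-1/\alpha}$. The $BMO$ bound is then immediate from the first inequality: with $u=\mathcal R^\perp\theta$ and the boundedness of the Riesz transform from $L^\infty$ to $BMO$ recalled above, $\norm{u(\cdot,t)}_{BMO}\le C\norm{\theta(\cdot,t)}_{L^\infty}\le C\norm{\theta_0}_{L^2}t^{-1/(2\alpha)}$, so it suffices to establish the pointwise estimate.

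First I would derive the truncated energy inequality. Fix a level $C\ge 0$ and test the equation against $(\theta-C)_+$. The transport term vanishes because $\int u\cdot\nabla\theta\,(\theta-C)_+\,dx = \tfrac12\int u\cdot\nabla (\theta-C)_+^2\,dx = -\tfrac12\int(\div u)(\theta-C)_+^2\,dx = 0$. For the dissipative term I symmetrise the singular-integral representation of $(-\Delta)^\alpha$ and use that $s\mapsto (s-C)_+$ is nondecreasing and $1$-Lipschitz, giving the pointwise inequality $(\theta(x)-\theta(y))\big((\theta-C)_+(x)-(\theta-C)_+(y)\big)\ge \big((\theta-C)_+(x)-(\theta-C)_+(y)\big)^2$; hence $\int (-\Delta)^\alpha\theta\,(\theta-C)_+\,dx \ge c\,[(\theta-C)_+]_{W^{\alpha,2}(\R^2)}^2$. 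This produces, for a.e. $s<\tau$, the building block
\[
\int (\theta-C)_+^2(\tau)\,dx + c\int_s^\tau [(\theta-C)_+]_{W^{\alpha,2}}^2\,d\sigma \le \int (\theta-C)_+^2(s)\,dx\,.
\]

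With this I run the iteration on the interval ending at the target time $t$. Set $T_k := t(1-2^{-k})$, $C_k := M(1-2^{-k})$ for a height $M$ to be fixed, write $\theta_k:=(\theta-C_k)_+$, and define $A_k := \sup_{\tau\in[T_k,t]}\int \theta_k^2(\tau)\,dx + \int_{T_k}^t [\theta_k]_{W^{\alpha,2}}^2\,d\sigma$. Picking a good starting time $\tau_1\in[T_{k-1},T_k]$ in the building block and averaging yields $A_k\le C\delta_k^{-1}\int_{T_{k-1}}^t\!\int\theta_k^2$ with $\delta_k=t2^{-k}$. I gain a superlinear power two ways: by the homogeneous Sobolev embedding $W^{\alpha,2}(\R^2)\hookrightarrow L^{2/(1-\alpha)}$ contained in Lemma \ref{lem:Sobolev}, interpolated against $L^\infty_\tau L^2_x$, the endpoint exponent in dimension $n=2$ is $2(1+\alpha)$ and $\int_{T_{k-1}}^t\!\int\theta_{k-1}^{2(1+\alpha)}\lesssim A_{k-1}^{1+\alpha}$; and by Chebyshev, since on $\{\theta_k>0\}$ one has $\theta_{k-1}>C_k-C_{k-1}=M2^{-k}$, the spacetime measure of this set is $\lesssim 4^kM^{-2}t\,A_{k-1}$. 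A spacetime Hölder split with exponents $1+\alpha$ and $(1+\alpha)/\alpha$ then gives, with $\beta:=\alpha/(1+\alpha)$,
\[
A_k \le C\,(2\cdot 4^{\beta})^{k}\, t^{\beta-1} M^{-2\beta}\, A_{k-1}^{1+\beta}\,.
\]
By the standard fast-geometric-decay lemma, $A_k\to0$ provided $A_0\le c_0\,(t^{\beta-1}M^{-2\beta})^{-1/\beta}=c_0\,t^{1/\alpha}M^{2}$ (using $(1-\beta)/\beta=1/\alpha$). Since the global energy inequality \eqref{e:g_energy_1} gives $A_0\le C\norm{\theta_0}_{L^2}^2$, it suffices to choose $M=C'\norm{\theta_0}_{L^2}t^{-1/(2\alpha)}$. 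Then $A_\infty=0$ forces $(\theta(t)-M)_+=0$, i.e. $\theta(t)\le M$ a.e.; applying the same argument to $-\theta$ (a solution with the same divergence-free drift) yields $\theta(t)\ge -M$, hence $\norm{\theta(\cdot,t)}_{L^\infty}\le M$, which is the claim.

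The principal difficulty is rigour at the level of weak solutions: neither the building block nor the absolute continuity of $\tau\mapsto\int(\theta-C)_+^2$ is granted by Definition \ref{def:LH} for a general Leray--Hopf solution. I would obtain them by carrying out the entire argument on the regularised problems from which the solution is constructed, where $\theta$ is smooth and all the above manipulations are licit, proving the truncated inequality and the recursion there with constants depending only on $\alpha$ (and not on $u$), and then passing to the limit; lower semicontinuity of the $L^2$ and $\dot H^\alpha$ norms preserves the final bound. The only genuinely nonlinear input, the dissipation lower bound through the symmetrised kernel, is elementary, and the whole scheme is manifestly independent of $u$ beyond $\div u=0$, which is precisely the source of the universality of the constant.
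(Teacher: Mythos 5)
Your De Giorgi level-set iteration is correct (the truncated energy inequality, the interpolation exponent $2(1+\alpha)$, the Chebyshev gain, and the resulting recursion $A_k\lesssim (2\cdot 4^\beta)^k t^{\beta-1}M^{-2\beta}A_{k-1}^{1+\beta}$ with $\beta=\alpha/(1+\alpha)$ all check out, and the choice $M\sim \norm{\theta_0}_{L^2}t^{-1/(2\alpha)}$ follows), and it is essentially the proof of the result being quoted: the paper itself gives no proof but imports this statement from \cite{ConstantinWu2009}, whose Theorem 2.1 is exactly this first De Giorgi step, using only the level-set energy inequality and $\div u=0$ as the paper's accompanying remark notes. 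Your caveat about justifying the level-set inequality via the regularized approximations is the same caveat implicit in the cited reference, so nothing further is needed.
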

\begin{remark} In \cite{ConstantinWu2009} Theorem \ref{thm:LerayConstWu} is proven for Leray--Hopf weak solutions of the coupled system \eqref{eq:SQG}--\eqref{eq:u}. However, in the proof of \eqref{eq:thetaLinfty} only the energy inequality on level sets together with the assumption $\div u=0$ is used; the structure \eqref{eq:u} is only used to deduce \eqref{eq:uBMO} from \eqref{eq:thetaLinfty}.
\end{remark}

\subsection{Suitable weak solutions} We are now ready to give our definition of suitable weak solution. Both this notion and the one of Leray-Hopf solution are given without requiring the coupling \eqref{eq:u}, since, in the proof of Theorem~\ref{prop:ereg2}, we will need to work on a larger class of equations, where $u$ is obtained from $\theta$ by means of the Riesz transform and a temporal translation. 
{
\begin{definition}\label{def:sws}
A Leray--Hopf weak solution $(\theta,u)$ of \eqref{eq:SQG}--\eqref{eq:Cauchy} 
 on $\R^2 \times (0,T)$ is a suitable weak solution if the following two inequalities 
hold for almost every $t\in (0,T)$, all nonnegative test functions\footnote{That is, the function $\varphi$ vanishes when 
$|x| + y + |t|$ is large enough and if $t$ is sufficiently close to $0$, but it can be nonzero on some regions of $\{(x,y,t): y=0\}$.} $\varphi \in C^{\infty}_c ( \R^3_+ \times (0,T))$ with $\partial_y \varphi (\cdot,0,\cdot)= 0$ in $\R^2 \times (0,T)$, for all {\cor$q \geq 2$}  and every linear transformation of the form $\eta:= (\theta-M)/L$ with scalar $L>0$ and shift $M \in \R$:
	\begin{align}\label{eqn:suit-weak-tested2}
\int_{\R^2} \varphi (x,0,t) &\eta^2 (x,t) \, dx +2 c_{\alpha}\int_0^t \int_{\R^3_+} y^b|\overline{ \nabla} \eta^* |^2 \varphi \, dx \, dy \, ds\\
	\leq&\;
	\int_0^t \int_{\R^2}( \eta^2 \partial_t \varphi|_{y=0} + u \eta^2 \cdot \nabla \varphi|_{y=0})\, dx \, ds \nonumber + c_{\alpha} \int_0^t\int_{\R^3_+} y^b  (\eta^*)^2  \overline\Delta_b \varphi \, dx \, dy \, ds \,,
	\end{align}
	\begin{align}\label{eqn:suit-weak-tested}
\int_{\R^2} \varphi (x,0,t) &\lvert \eta\rvert^q(x,t) \, dx +4 \big(1- \tfrac{1}{q}\big)  c_{\alpha}\int_0^t \int_{\R^3_+} y^b|\overline{ \nabla} \lvert \eta^* \rvert^\frac{q}{2} |^2 \varphi \, dx \, dy \, ds\\
	\leq&\;
	\int_0^t \int_{\R^2}( \lvert \eta\rvert^q \partial_t \varphi|_{y=0} + u \lvert \eta \rvert^q \cdot \nabla \varphi|_{y=0})\, dx \, ds \nonumber + c_{\alpha} \int_0^t\int_{\R^3_+} y^b \lvert \eta^*\rvert^q \overline\Delta_b \varphi \, dx \, dy \, ds \,,
	\end{align}
where the constant $c_{\alpha}$ depends only on $\alpha$ and comes from Theorem~\ref{thm:CF}. 

Correspondingly, we say that $\theta$ is a suitable weak solution of \eqref{eq:SQG}--\eqref{eq:u} if additionally \eqref{eq:u} holds. 
\end{definition}
}
\begin{remark} In the classical notion of suitable weak solutions for the (hyperdissipative) Navier-Stokes equations, the local energy inequality \eqref{eqn:suit-weak-tested2} is asked to hold only for $\theta$ and not for every linear transformation $\eta:=(\theta-M)/L$. However, it can be proved (see for instance \cite{CDLM17}) that the class of suitable weak solutions is stable under this transformation. Here on the other hand, since we use a ``nonlinear" energy inequality \eqref{eqn:suit-weak-tested}, it is no longer obvious that the class of suitable weak solutions is stable under linear transformations; hence we require it already in the definition.
The class of suitable weak solutions contains smooth solutions (see Section \ref{subsec:energyineqsmooth}) and is non-empty (see Section \ref{subsec:existence}) for any $L^2$ initial datum.
\end{remark}

\subsection{Local energy equality for smooth solutions}\label{subsec:energyineqsmooth} It is not difficult to see that \eqref{eqn:suit-weak-tested2} and \eqref{eqn:suit-weak-tested} hold with an equality for every {\em smooth} solution of \eqref{eq:SQG}--\eqref{eq:Cauchy}. Indeed, let $f\in C^2(\R)$. We multiply \eqref{eq:SQG} by $ f'(\theta) \varphi|_{y=0}$
 and integrate in space to obtain for $t\in [0,T]$
\begin{align}
\int_{\R^2} f(\theta) (x,t) \varphi|_{y=0}(x, t) \, dx - \int_0^t \int_{\R^2} [f(\theta) \partial_t \varphi|_{y=0} &+ u f(\theta) \cdot \nabla \varphi|_{y=0}] \, dx \, ds \\
&= -2 \int_0^t \int_{\R^2} (-\Delta)^\alpha \theta  f'(\theta)\varphi|_{y=0} \, dx \, ds \,. 
\end{align}
By means of the divergence theorem, we compute for fixed time $t$
\begin{align}
\int_{\R^2} (-\Delta)^\alpha &\theta(x,t) f'(\theta)(x,t) \varphi|_{y=0}(x,t) \, dx \\
&= c_\alpha \lim_{y \to 0^+} \int_{\R^2} y^b \partial_y \theta^*(x,y,t) \left(f'(\theta^*) \varphi\right)(x,y,t) \, dx \\
&= c_\alpha \int_{\R^3_+} \overline{\div} ( y^b \overline{\nabla} \theta^* f'(\theta^*)\varphi) \, dx \, dy \\
&= 
 c_\alpha \int_{\R^3_+} y^b \lvert \overline{\nabla}\theta^* \rvert^2 f''(\theta^*) \varphi \, dx \, dy - {c_\alpha}\int_{\R^3_+} y^b f(\theta^*)\overline{\Delta}_b \varphi \, dx \, dy \,,
\end{align}
where we integrated by parts in the third equality and used that the boundary terms vanish due to the hypothesis $\partial_y \varphi(\cdot, 0, \cdot) = 0\, .$ We obtain that for $f\in C^2(\R)$ 
\begin{align}
\int_{\R^2} &f(\theta)(x,t) \varphi|_{y=0} (x,t) \, dx + c_\alpha \int_0^t \int_{\R^3_+} y^b
 \lvert \overline{\nabla}\theta^* \rvert^2 f''(\theta^*) 
  \varphi \, dx \, dy \, ds \\
&= \int_0^t \int_{\R^2} \left[f(\theta) \partial_t \varphi|_{y=0} + u f(\theta) \cdot \nabla \varphi_{y=0} \right] \, dx \, ds + c_\alpha \int_0^t \int_{\R^3_+} y^b f(\theta^*) \overline{\Delta}_b \varphi \, dx \, dy \, ds \, .
\end{align}
Observe that if $f$ is moreover convex and nonnegative, both the left- and the right-hand side of the above equality have a sign.
In particular, we obtain \eqref{eqn:suit-weak-tested2} with an equality when choosing $f(x)=\big(\frac{x-M}{L}\big)^2$ (since $f''\equiv 2L^{-2}$) and  \eqref{eqn:suit-weak-tested} when choosing $f(x)= \big\lvert \frac{x-M}{L} \big\rvert^{q}$ for $q \geq 2 \, .$ 
\subsection{Existence of suitable weak solutions}\label{subsec:existence} For any $\alpha \in (0, \frac{1}{2})$ the existence of suitable weak solutions can be established from any initial datum $\theta_0 \in L^2(\R^2)$ by adding a vanishing viscosity term $\epsilon \Delta \theta$ on the right-hand side and letting $\epsilon \to 0\, .$ The key argument is a classical Aubin-Lions type compactness argument that we sketch in Appendix~\ref{a:existencesws}.  

\begin{theorem}\label{thm:existencesuitable}
For any $\theta_0\in L^2 (\R^2)$ there is a suitable weak solution of \eqref{eq:SQG}--\eqref{eq:u} on $\R^2 \times (0, \infty)$. 
\end{theorem}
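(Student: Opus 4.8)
The plan is to construct the suitable weak solution by vanishing viscosity. For $\epsilon>0$ I would consider the regularized equation
$$\partial_t \theta^\epsilon + u^\epsilon \cdot \nabla \theta^\epsilon = -(-\Delta)^\alpha \theta^\epsilon + \epsilon \Delta \theta^\epsilon, \qquad u^\epsilon = \mathcal{R}^\perp \theta^\epsilon,$$
with a mollified initial datum $\theta_0^\epsilon \in C^\infty_c(\R^2)$ converging to $\theta_0$ in $L^2$. The added Laplacian makes the problem uniformly parabolic, so for each fixed $\epsilon$ standard parabolic theory (Galerkin approximation, together with the $L^\infty$ bound recalled in Theorem~\ref{thm:LerayConstWu}, whose proof uses only $\div u^\epsilon=0$ and the energy inequality on level sets and therefore survives, $\epsilon$-uniformly, the extra nonnegative dissipation) yields a unique global smooth solution $\theta^\epsilon$. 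By Section~\ref{subsec:energyineqsmooth}, each $\theta^\epsilon$ satisfies the local energy equalities obtained by multiplying by $f'(\theta^\epsilon)\varphi|_{y=0}$; the only additional contribution is the viscous term which, after integrating by parts in $x$, splits into $-\epsilon\int \lvert\nabla\theta^\epsilon\rvert^2 f''(\theta^\epsilon)\varphi$ (sitting on the dissipative, good side) and a remainder $\epsilon\int f(\theta^\epsilon)\,\Delta_x\varphi|_{y=0}$.

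Testing the regularized equation with $\theta^\epsilon$ and using $\div u^\epsilon=0$ gives the $\epsilon$-uniform bounds $\theta^\epsilon \in L^\infty((0,T),L^2)\cap L^2((0,T),W^{\alpha,2})$ together with $\sqrt\epsilon\,\nabla\theta^\epsilon\in L^2$; by interpolation $\theta^\epsilon$ is uniformly bounded in $L^{2(1+\alpha)}(\R^2\times(0,T))$, hence so is $u^\epsilon=\mathcal{R}^\perp\theta^\epsilon$ by Calder\'on--Zygmund, and the flux $u^\epsilon\theta^\epsilon$ is uniformly bounded in $L^{1+\alpha}$. Reading $\partial_t\theta^\epsilon$ off the equation then shows it is uniformly bounded, locally in space, in $L^{1+\alpha}((0,T),W^{-1,1+\alpha})+L^2((0,T),H^{-\alpha})$ (the viscous term $\epsilon\Delta\theta^\epsilon=\sqrt\epsilon\,\div(\sqrt\epsilon\nabla\theta^\epsilon)$ is $O(\sqrt\epsilon)$ in $L^2_tH^{-1}_x$ and disappears). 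An Aubin--Lions--Simon argument on an exhaustion of $\R^2$ by balls, with tails controlled by the uniform $L^2$ energy, then yields along a subsequence a limit $\theta$ with $\theta^\epsilon\to\theta$ strongly in $L^2_{loc}$ spacetime, hence in $L^p_{loc}$ for every $p<2(1+\alpha)$, together with $\theta^\epsilon\rightharpoonup\theta$ weakly in $L^2_tW^{\alpha,2}_x$ and $u^\epsilon\to u=\mathcal{R}^\perp\theta$ strongly in $L^2_{loc}$.

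Strong convergence lets me pass to the limit in the weak formulation \eqref{eqn:weak} (the only nonlinear term $u^\epsilon\theta^\epsilon\to u\theta$ in $L^1_{loc}$) and, by weak lower semicontinuity of the dissipation together with $\theta^\epsilon(t)\to\theta(t)$ in $L^2$ for a.e.\ $t$, in the global energy inequalities \eqref{e:g_energy_1}--\eqref{e:g_energy_2}. For the local inequalities \eqref{eqn:suit-weak-tested2} and \eqref{eqn:suit-weak-tested} I would start from the regularized energy equalities, move the nonnegative term $\epsilon\int\lvert\nabla\theta^\epsilon\rvert^2 f''(\theta^\epsilon)\varphi\ge 0$ to the left and discard it, and then pass to the limit term by term: the boundary and transport terms converge by strong convergence; the extension integrals $\int y^b f((\theta^\epsilon)^*)\overline{\Delta}_b\varphi$ converge because the Caffarelli--Silvestre extension of Theorem~\ref{thm:CF} is a continuous linear map and the weighted embedding is locally compact; the remainder $\epsilon\int f(\theta^\epsilon)\Delta_x\varphi\to 0$; and the dissipative terms $\int y^b\lvert\overline{\nabla}\eta^*\rvert^2\varphi$ and $\int y^b\big\lvert\overline{\nabla}\lvert\eta^*\rvert^{q/2}\big\rvert^2\varphi$ survive with a $\liminf$ by weak lower semicontinuity, yielding the required $\le$.

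The main obstacle is precisely the passage to the limit in the \emph{nonlinear} dissipation $\int y^b\big\lvert\overline{\nabla}\lvert(\eta^\epsilon)^*\rvert^{q/2}\big\rvert^2\varphi$ for large $q$: here I must both bound $\lvert(\eta^\epsilon)^*\rvert^{q/2}$ uniformly in the weighted $H^1$ and identify the weak limit of the nonlinear composition $\lvert(\eta^\epsilon)^*\rvert^{q/2}\rightharpoonup\lvert\eta^*\rvert^{q/2}$. This is where the $\epsilon$-uniform $L^\infty$ bound of Theorem~\ref{thm:LerayConstWu} is essential; its degeneration as $t\to 0$ is harmless because the admissible test functions $\varphi$ vanish near $t=0$, so on the time-support $[\delta,T)$ of $\varphi$ one has a uniform $L^\infty$ bound, and then the strong $L^2$ convergence of $\theta^\epsilon$ pins down the weak limit while the energy equality furnishes the uniform weighted $H^1$ bound. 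The remaining care concerns the unbounded domain, handled by the exhaustion/diagonal procedure with tails controlled by the energy; the full details are deferred to Appendix~\ref{a:existencesws}.
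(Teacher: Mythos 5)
Your proposal is correct and follows essentially the same route as the paper's proof in Appendix~\ref{a:existencesws}: vanishing viscosity regularization, the $\epsilon$-uniform energy and $L^\infty$ bounds, an Aubin--Lions type compactness argument giving strong $L^p$ convergence, and passage to the limit in the local energy inequalities via strong convergence of the Caffarelli--Silvestre extensions (the paper does this through the Poisson kernel and Young's inequality, which is the concrete form of your "continuity of the extension map") combined with weak lower semicontinuity of the dissipative terms, including the nonlinear one $\int y^b\lvert\overline{\nabla}\lvert\eta^*\rvert^{q/2}\rvert^2\varphi$. The minor variations (mollified initial data, exhaustion by balls instead of global mollification plus Arzel\`a--Ascoli) do not change the substance of the argument.
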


\subsection{Compactness}\label{sec:compactness} We establish the compactness of a sequence of suitable weak solutions with vanishing excess. Let $(\theta, u)$ be a solution of \eqref{eq:SQG}--\eqref{eq:Cauchy} on $\R^2 \times (0, T)\,.$ For $r>0$ and 
$Q_r(x,t) \subseteq \mathbb{R}^2 \times (0, T)$, we define the excess as 
\begin{align}
E(\theta, u; x, t, r)&:= E^S(\theta; x,t,r) +
E^V(u; x, t,r)+ 
E^{NL} (\theta; x, t,r)
 \end{align}
where
\begin{align}
 E^S(\theta; x,t,r) &:= \bigg(\,\mean{Q_r(x,t)} \lvert \theta(z,s)- (\theta)_{Q_r(x,t)} \rvert^p \, dz \, ds \bigg)^\frac{1}{p} \\
 E^V(u; x, t,r) &:= \bigg(\, \mean{Q_r(x,t)} \lvert u(z,s)- \left[u(s)\right]_{B_r(x)} \rvert^p \, dz \, ds \bigg)^\frac{1}{p} 
 \\
 E^{NL}(\theta; x, t, r)&:= \bigg(\,\mean{t-r^{2\alpha}}^t \sup_{R \geq \frac{r}{4}} \left(\frac{r}{R} \right)^{\sigma p} \bigg(\, \mean{B_R(x)} \lvert \theta(z,s) - {\new [\theta(s)]_{B_r(x)} }\rvert^{ \frac{3}{2}} \, dz \bigg)^\frac{2p}{3}  \, ds \bigg)^\frac{1}{p}
 \, ,
\end{align}
for $p \in (3, \infty)$ and $\sigma \in (0, 2\alpha)$ yet to be chosen. Observe that both parameters serve as (hidden) parameter for now and will be chosen in the very end to close the main $\varepsilon$-regularity Theorem (see also Remark \ref{rem:parameter}). Whenever $(x,t)=(0,0)$, we will denote the excess simply by $E(\theta, u;r)\, .$

\begin{remark}[Rescaling of the excess] \label{rem:rescalingexcess} The excess behaves nicely under the natural rescaling \eqref{eq:rescaling}. Indeed, for $r>0$ we have
\begin{equation}
E(\theta, u;x,t,r)= r^{1-2\alpha} E(\theta_r, u_r ;x, t, 1)\,.
\end{equation}
\end{remark}

\begin{lemma}[Compactness]\label{lem:compactness} Let $\alpha \in (0, \frac{1}{2}]\,,$ $\sigma \in (0, 2\alpha)$ and {$p> \frac{1+\alpha}{\alpha}\, .$} Let $(\theta_k, u_k)$ be a sequence of suitable weak solutions of \eqref{eq:SQG}--\eqref{eq:Cauchy} on $\R^2 \times [-1,0]\,$ with 
\begin{itemize}
\item $\lim_{k \to \infty }E(\theta_k, u_k ;1) =0\,$
\item and $[u_k(s)]_{B_1} =0$ for all $s \in [-1, 0]\, .$
\end{itemize}
Set $E_k:=E(\theta_k, u_k; 1)$ and define $\eta_k:=(\theta_k - (\theta_k)_{Q_1})/E_k\, .$  

Then there exists $\eta \in L^{3/2}_{loc}(\R^2 \times [-1, 0])$ such that, up to subsequences, $\eta_k \rightharpoonup \eta$ weakly in $L^{3/2}_{loc}(\R^2 \times [-1,0])\,.$  Moreover, $$\eta_k \to \eta \text{ strongly in }L^p(Q_{3/4})\,$$  and $\eta$ solves $$\partial_t \eta + (-\Delta)^\alpha \eta =0 \text{ on } Q_{3/4}\,$$ with $E^S(\eta;1) +E^{NL}(\eta;1) \leq 1\, .$ 
\end{lemma}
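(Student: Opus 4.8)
The plan is to implement the linearization scheme of \cite{Lin1998, CDLM17}: extract uniform energy bounds for the normalized sequence, pass to a strongly convergent subsequence by an Aubin--Lions argument, and then send $k\to\infty$ in the weak formulation, the nonlinear term being annihilated by the vanishing factor $E_k$.

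\textbf{A priori bounds from the excess.} Since $\eta_k=(\theta_k-(\theta_k)_{Q_1})/E_k$ differs from $\theta_k/E_k$ only by a constant, the scale-one excesses are normalized: $E^S(\eta_k;1)=E^S(\theta_k;1)/E_k$ and $E^{NL}(\eta_k;1)=E^{NL}(\theta_k;1)/E_k$, and since the three summands of $E(\theta_k,u_k;1)=E_k$ are nonnegative we obtain the key bound $E^S(\eta_k;1)+E^{NL}(\eta_k;1)\le 1$. In particular $\eta_k$ is bounded in $L^p(Q_1)$; moreover the supremum over $R\ge \tfrac14$ inside $E^{NL}$ controls the $L^{3/2}$-averages of $\eta_k$ on all large balls, hence bounds $\eta_k$ in $L^{3/2}_{loc}(\R^2\times[-1,0])$ and produces (up to subsequences) the weak limit $\eta\in L^{3/2}_{loc}$. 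This global-in-space control is exactly what the nonlocal operator and the Caffarelli--Silvestre extension need to see the spatial tails. Finally, setting $v_k:=u_k/E_k$ and using $[u_k(s)]_{B_1}=0$, the bound $E^V(u_k;1)\le E_k$ gives $\|v_k\|_{L^p(Q_1)}\le C$.

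\textbf{Energy estimates and higher integrability.} I would apply the quadratic local energy inequality \eqref{eqn:suit-weak-tested2} to $\eta_k$ with a cutoff $\varphi$ equal to $1$ on (the extension of) $Q_{3/4}$ and supported in that of $Q_1$, with $\partial_y\varphi(\cdot,0,\cdot)=0$. The transport term equals $E_k\int v_k\eta_k^2\cdot\nabla\varphi|_{y=0}$, which is bounded via H\"older (using $2p'\le p$, i.e.\ $p\ge3$) by the a priori $L^p$-bounds and is in fact $o(1)$; the remaining terms are controlled by $\|\eta_k\|_{L^2}$ and, for the weighted $L^2$ extension term, by a standard weighted Poincar\'e/Caccioppoli absorption. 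Together with \eqref{eq:GagliardoToExt} this yields uniform bounds for $\eta_k$ in $L^\infty_tL^2\cap L^2_t W^{\alpha,2}$ on $Q_{3/4}$. To reach integrability above $p$ I use the genuinely \emph{nonlinear} inequality \eqref{eqn:suit-weak-tested}: choose $q$ with $\max\{2,\,p/(1+\alpha)\}<q\le p-1$, which is possible \emph{precisely because} $p>\tfrac{1+\alpha}{\alpha}$. The choice $q\le p-1$ forces $qp'\le p$, so the transport term $E_k\int v_k|\eta_k|^q\cdot\nabla\varphi|_{y=0}$ is controlled directly by the a priori $L^p$-bound (no circularity) and again vanishes; the weighted $L^q$ extension term is absorbed using that \eqref{eqn:suit-weak-tested} controls $\int y^b|\overline\nabla|\eta_k^*|^{q/2}|^2\varphi$. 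Applying Lemma \ref{lem:Sobolev} to $|\eta_k|^{q/2}$ and interpolating the resulting $L^\infty_tL^q_x$ and $L^q_tL^{q/(1-\alpha)}_x$ bounds gives
\begin{equation}
\big\||\eta_k|^{q/2}\big\|_{L^{2(1+\alpha)}(Q_{3/4})}\le C,\qquad\text{i.e.}\qquad \|\eta_k\|_{L^{(1+\alpha)q}(Q_{3/4})}\le C,
\end{equation}
with $(1+\alpha)q>p$. This step, where the new nonlinear energy inequality and the hypothesis $p>\tfrac{1+\alpha}{\alpha}$ both enter decisively, is the main obstacle.

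\textbf{Compactness and passage to the limit.} The spatial bound $\eta_k\in L^2_tW^{\alpha,2}(B_{3/4})$ (with $W^{\alpha,2}\hookrightarrow\hookrightarrow L^2$) together with a temporal bound for $\partial_t\eta_k=-\div(E_kv_k\eta_k)-(-\Delta)^\alpha\eta_k$ in a negative-order space (the transport part bounded in $L^{p/2}$ and of size $O(E_k)$, the dissipative part bounded by the $W^{\alpha,2}$-control) lets Aubin--Lions deliver $\eta_k\to\eta$ strongly in $L^2(Q_{3/4})$. Interpolating this with the $L^{(1+\alpha)q}$-bound upgrades the convergence to $\eta_k\to\eta$ strongly in $L^p(Q_{3/4})$. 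Passing to the limit in the weak form \eqref{eq:weakform3} for $\eta_k$: the linear term converges, the extensions $\eta_k^*\rightharpoonup\eta^*$ weakly in the weighted $H^1$ (by uniform Dirichlet energy and trace continuity of the extension) so that the $\int y^b\overline\nabla\eta_k^*\cdot\overline\nabla\varphi$ term converges, and the transport term $E_k\int v_k\eta_k\cdot\nabla\varphi|_{y=0}\to0$ since $v_k\eta_k$ is bounded in $L^{p/2}$ while $E_k\to0$. Hence $\eta$ solves $\partial_t\eta+(-\Delta)^\alpha\eta=0$ on $Q_{3/4}$. The excess bound $E^S(\eta;1)+E^{NL}(\eta;1)\le1$ follows from weak lower semicontinuity: the $L^{3/2}$-based $E^{NL}$ is lower semicontinuous along the weak $L^{3/2}_{loc}$ convergence, while the $L^p$-based $E^S$ is lower semicontinuous along the weak $L^p(Q_1)$ convergence (extracted from the uniform $L^p(Q_1)$-bound), and the sum of the two excesses of $\eta_k$ was shown to be $\le1$.
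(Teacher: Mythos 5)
Your proposal is correct and follows essentially the same route as the paper: normalization by $E_k$, the quadratic and the nonlinear local energy inequalities (the paper fixes the exponent $q=p-1$, you allow any $q\in(\max\{2,p/(1+\alpha)\},\,p-1]$, which is the same mechanism) to reach integrability $(1+\alpha)q>p$ precisely thanks to $p>\tfrac{1+\alpha}{\alpha}$, an Aubin--Lions compactness step, vanishing of the transport term because $u_k\to 0$ in $L^p(Q_1)$, and weak lower semicontinuity for the excess bound. The one imprecision is your claim that the weighted extension term $\int y^b|\eta_k^*|^q\,\overline\Delta_b\varphi$ is ``absorbed'' into the gradient term: in the paper it is instead estimated via the Poisson-kernel tail bound (Lemma \ref{lem:tail}) using $E^{NL}$, which is also the ingredient needed to make sense of $(-\Delta)^\alpha\eta_k$ in your time-derivative estimate --- but you correctly identified $E^{NL}$ as supplying exactly this far-field control, so the gap is one of bookkeeping rather than of ideas.
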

We will need the following auxiliary Lemma.

\begin{lemma}[Tail estimate] \label{lem:tail} Let $\alpha \in (0, \frac{1}{2}]\,,$ $\sigma \in (0, 2 \alpha)$ and $1<p<\infty\,$. Then there exists a universal constant $C=C(\alpha, \sigma, p)\geq 1$ such that for every $\theta \in L^p(B_2)$ with 
\begin{equation}
\sup_{R \geq 1} \frac{1}{R^{\sigma p}} \bigg(\, \mean{B_R} \lvert \theta(x) \rvert \, dx \bigg)^p < + \infty \, ,
\end{equation}
we have the estimate 
\begin{equation}
\int_{B_1^*} y^b \lvert \theta^*(x,y) \rvert^p \, dx \, dy \leq C \Bigg( \int_{B_2} \lvert \theta(x) \rvert^p \, dx +\sup_{R \geq 1} \frac{1}{R^{\sigma p}} \bigg( \,\mean{B_R} \lvert \theta(x) \rvert \, dx \bigg)^p\Bigg) \,.
\end{equation}
\end{lemma}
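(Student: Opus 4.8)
The plan is to work directly with the Poisson representation of the Caffarelli--Silvestre extension. For $n=2$ and $b=1-2\alpha$ one has $\theta^*(x,y)=\int_{\R^2}P_y(x-z)\,\theta(z)\,dz$ with the nonnegative kernel $P_y(x)=c_{2,\alpha}\,y^{2\alpha}\big(|x|^2+y^2\big)^{-(1+\alpha)}$ satisfying $\int_{\R^2}P_y(x)\,dx=1$ for every $y>0$; the assumed growth of the averages guarantees $\int_{\R^2}|\theta(z)|(1+|z|)^{-(2+2\alpha)}\,dz<\infty$, so this integral converges and is linear in $\theta$. I would then split $\theta=\theta\mathbf 1_{B_2}+\theta\mathbf 1_{B_2^c}$ into a near and a far part relative to $B_2$, and use $|\theta^*|\le|(\theta\mathbf 1_{B_2})^*|+|(\theta\mathbf 1_{B_2^c})^*|$ together with convexity to estimate the two contributions to $\int_{B_1^*}y^b|\theta^*|^p$ separately.

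For the near part I would exploit that, restricted to $B_2$, the measure $P_y(x-\cdot)\,dz$ is a subprobability measure, so Jensen's inequality (using $p\ge 1$ and total mass $\le 1$) gives the pointwise bound $|(\theta\mathbf 1_{B_2})^*(x,y)|^p\le\int_{B_2}P_y(x-z)\,|\theta(z)|^p\,dz$. Integrating against $y^b\,dx\,dy$ over $B_1^*$, swapping the order of integration by Fubini, and bounding the resulting inner kernel integral by $\int_0^1 y^b\big(\int_{\R^2}P_y\,dx\big)\,dy=\int_0^1 y^b\,dy=(2-2\alpha)^{-1}$ yields $\int_{B_1^*}y^b|(\theta\mathbf 1_{B_2})^*|^p\le C(\alpha)\int_{B_2}|\theta|^p$, which is the first term on the right-hand side.

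The heart of the argument, and the place where the hypothesis $\sigma<2\alpha$ is essential, is the far (tail) part. For $(x,y)\in B_1^*$ and $|z|\ge 2$ one has $|x-z|\ge|z|/2$, hence $P_y(x-z)\le c\,y^{2\alpha}|z|^{-(2+2\alpha)}$ and so $|(\theta\mathbf 1_{B_2^c})^*(x,y)|\le c\,y^{2\alpha}\int_{|z|\ge 2}|\theta(z)|\,|z|^{-(2+2\alpha)}\,dz$. I would estimate the last integral by a dyadic decomposition into annuli $2^j\le|z|<2^{j+1}$, $j\ge 1$: writing $T$ for the supremum appearing in the statement, the growth hypothesis gives $\int_{B_R}|\theta|\le C R^{\sigma+2}\,T^{1/p}$, so the $j$-th annulus contributes $\lesssim 2^{j(\sigma-2\alpha)}\,T^{1/p}$ and the series converges precisely because $\sigma-2\alpha<0$. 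This gives the pointwise bound $|(\theta\mathbf 1_{B_2^c})^*(x,y)|\le C(\alpha,\sigma)\,y^{2\alpha}\,T^{1/p}$; raising to the $p$-th power and integrating $y^b\,dx\,dy$ over $B_1^*$ (the $y$-integral is finite since $b+2\alpha p=1+2\alpha(p-1)>-1$) produces $\int_{B_1^*}y^b|(\theta\mathbf 1_{B_2^c})^*|^p\le C(\alpha,\sigma,p)\,T$.

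Combining the two bounds with the elementary inequality $(a+b)^p\le 2^{p-1}(a^p+b^p)$ gives the claim. The only genuinely delicate point is the far-field estimate: one must convert the hypothesis, which controls only $L^1$-averages of $\theta$ over balls, into control of the weighted singular integral $\int_{|z|\ge 2}|\theta(z)|\,|z|^{-(2+2\alpha)}\,dz$, and the dyadic summation converges only because of the strict inequality $\sigma<2\alpha$. The near-field estimate, the convergence of the $y$-integrals, and the final combination are routine.
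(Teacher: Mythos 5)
Your proposal is correct and follows essentially the same route as the paper: both split $\theta$ into the part supported in $B_2$ and a far field, bound the near contribution by the $L^1$-normalization of the Poisson kernel (your Jensen argument is interchangeable with the paper's use of Young's convolution inequality), and control the tail via the pointwise kernel decay $P_y(x-z)\lesssim y^{2\alpha}|z|^{-(2+2\alpha)}$ together with a dyadic summation that converges precisely because $\sigma<2\alpha$. The only difference is organizational (the paper decomposes $\theta$ itself into dyadic annular pieces and sums their extensions' $L^\infty$ norms, whereas you decompose the weighted tail integral), which does not affect the substance.
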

\begin{proof}
We set $\theta_1:= \theta \mathds{1}_{B_2}$ and $\theta_{i+1} := \theta (\mathds{1}_{B_{2^{i+1}}}- \mathds{1}_{B_{2^i}})$ for $i\geq1\, $. Recall that for $y>0$ the extension is given by $\theta^*(x,y)= (P(\cdot, y) \ast \theta)(x)$ for $P(x,y)= y^{2\alpha}/(\lvert x \rvert^2 + y^2)^{1+\alpha}\, .$ We estimate
\begin{equation}
\int_{B_1^* } y^b \lvert \theta^* \rvert^p\, dx \, dy \lesssim \int_{B_1^*} y^b \lvert \theta_1^* \rvert^p \, dx \, dy+ \int_0^1 y^b \Big( \sum_{i>1} \norm{\theta_i^*(\cdot, y)}_{L^\infty(B_1)} \Big)^p \, dy \, .
\end{equation}
The first term is estimated using Young and the fact that $\norm{P(\cdot, y)}_{L^1(\mathbb{R}^2)}=\norm{P(\cdot, 1)}_{L^1(\mathbb{R}^2)} = C_{\alpha}$ is a universal constant (see for instance the appendix of \cite{CDLM17}). Indeed, 
\begin{align}
\int_{B_1^*} y^b \lvert \theta_1^* \rvert^p \, dx \, dy= \int_0^1 y^b \norm{\theta_1 \ast P(\cdot, y)}_{L^p(B_1)}^p \, dy &\leq \int_0^1 y^b \norm{P(\cdot, y)}_{L^1(\R^2)}^p \norm{\theta_1}_{L^p(\R^2)}^p \, dy  \leq C_{\alpha}^p \int_{B_2} \lvert \theta \rvert^p \, dx \, .
\end{align}
For $i\geq1$, we estimate, using the fact that for $x \in B_1$ and $z \in B_{2^{i+1}} \setminus B_{2^i}$ we have $\lvert x-z \rvert \geq 2^{i-1}$ and thus $P(x-z,y) \leq P(2^{i-1}, y)$ uniformly in $z$, 
\begin{equation}
\norm{\theta_{i+1}^*(\cdot, y)}_{L^\infty(B_1)} \leq P(2^{i-1}, y) \int_{B_{2^{i+1}} \setminus B_{2^i}} \lvert \theta \rvert \, dx \leq \int_{B_{2^{i+1}} \setminus B_{2^i}} \frac{y^{2\alpha} \lvert \theta \rvert}{2^{2(1+\alpha)(i-1)}} \, dx \, ,
\end{equation}
so that 
\begin{align*}
\sum_{i>1} \norm{\theta_i^* (\cdot, y)}_{L^\infty(B_1)} &\leq \sum_{i\geq 1} \frac{y^{2\alpha}}{2^{(2\alpha-\sigma)(i-1)}} \int_{B_{2^{i+1}} \setminus B_{2^i}} \frac{\lvert \theta \rvert}{2^{(2+\sigma)(i-1)}} \, dx \\
&\leq C y^{2\alpha} \Big( \sum_{i\geq 1} \frac{1}{2^{(2\alpha-\sigma)(i-1)}} \Big) \sup_{R \geq 1} \frac{1}{R^\sigma} \mean{B_R} \lvert \theta \rvert \,dx \, .
\end{align*}
We obtain the claim by raising the previous inequality to the power $p$.
\end{proof}

\begin{proof}[Proof of Lemma \ref{lem:compactness}] Observe that $u_k \to 0$ in $L^p(Q_1)$ and thus, we may assume that $$\sup_{k \geq 1} \, \norm{u_k}_{L^p(Q_1)} \leq 1\, .$$ Moreover, by construction the pair $(\eta_k, u_k)$ is a distributional solution to $$\partial_t \eta_k + u_k \cdot \nabla \eta_k + (-\Delta)^\alpha \eta_k =0\,$$
with $E(\eta_k, \frac{u_k}{E_k}; 1) = 1\, .$

\textit{Step 1: We prove the uniform boundedness of $\eta_k$ in $\big(L^\infty L^2 \cap L^2 W^{\alpha,2} \cap L^{(p-1)(1+\alpha)}\big)(Q_{3/4})\,.$} \\
Fix a test function $\varphi \in C^\infty_c(\R^3_+ \times (0, \infty))$ such that $0 \leq \varphi \leq 1$, $\supp \varphi \subset Q_{7/8}^*$ and $\varphi \equiv 1$ on $Q_{27/32}^*\,.$ Moreover, we assume that $\varphi$ is constant in $y$ for small $y$, that is $\partial_y \varphi =0$ for $\{y < \frac{1}{2}\}\,.$ From the local energy inequality \eqref{eqn:suit-weak-tested2} we deduce that for $t \in [-(13/16)^{2\alpha}, 0]$
\begin{align}
\int_{B_{27/32}} &\eta_k^2(x,t) \, dx + 2 c_\alpha \int_{-\left(\frac{13}{16}\right)^{2\alpha}}^t \int_{B_{27/32}^*} y^b {\new \lvert \overline \nabla} \eta_k^*\rvert^2(x,y,s) \, dx \, dy \, ds \\
&\leq \int_{B_{7/8}}  \eta_k^2(x,t) \varphi(x,0,t) \, dx + 2 c_\alpha \int_{-\left(\frac{7}{8}\right)^{2\alpha}}^t \int_{B_{7/8}^*} y^b {\new \lvert \overline \nabla} \eta_k^*\rvert^2(x,y,s) \varphi(x,y,s) \, dx \, dy \, ds\\
&\leq \int_{-\left(\frac{7}{8}\right)^{2\alpha}}^t \int_{B_{7/8}} ( \eta_k^2 \partial_t \varphi|_{y=0}+ u_k \eta_k^2 \cdot \nabla \varphi|_{y=0})\, dx \, ds+ c_\alpha \int_{-\left(\frac{7}{8}\right)^{2\alpha}}^t \int_{B_{7/8}^*} y^b (\eta_k^*)^2 \overline{\Delta}_b \varphi \,dx \, dy \, ds \\
&\lesssim  \int_{Q_{7/8}}( \eta_k^2+ \lvert \eta_k \rvert^\frac{2p}{p-1}+ \lvert u_k \rvert^p)  \, dx \, ds  +   \int_{Q_{7/8}^*} y^b ( \eta_k^*)^2 \, dx \, dy \, ds \,.
\end{align}
Using Lemma \ref{lem:Sobolev}, the previous inequality and Lemma \ref{lem:tail}, we deduce that for $t \in [-(13/16)^{2\alpha}, 0]$
\begin{align}
\int_{B_{13/16}}  &\eta_k^2(x,t) \, dx + \int_{-\left(\frac{13}{16}\right)^{2\alpha}}^t [\eta_k(s)]_{W^{\alpha, 2}(B_{13/16})}^2 \, ds\\
&\lesssim \int_{B_{27/32}}  \eta_k^2(x,t) \, dx + 2 c_\alpha \int_{-\left(\frac{13}{16}\right)^{2\alpha}}^t \int_{B_{27/32}^*} y^b\big( \lvert \overline{\nabla}\eta_k^* \rvert^2 +(\eta_k^*)^2\big) \, dx \, dy \, ds \\
&\lesssim \int_{Q_1}( \eta_k^2+ \lvert \eta_k \rvert^\frac{2p}{p-1}+ \lvert u_k \rvert^p )\, dx \, ds + \int_{-1}^0 \sup_{R \geq 1} \frac{1}{R^{2\sigma}} \bigg(\, \mean{B_R} \lvert \eta_k(x,s) - [\eta_k(s)]_{B_1} \rvert \, dx \bigg)^2\, ds\\
&\lesssim 1 \,,
\end{align}
where we used in the last inequality that $\frac{2p}{p-1} \leq p \,$ together with the fact $E(\eta_k, \frac{u_k}{E_k};1) =1 \,.$ Taking the supremum over $t \in [-(13/16)^{2\alpha}, 0]\,,$ we deduce that uniformly in $k \geq 1$
\begin{align}\label{eq:uniformbound1}
\sup_{t \in [-(13/16)^{2\alpha}, 0]} \int_{B_{13/16}}  \eta_k^2(x,t) \, dx &+ \int_{-\left(\frac{13}{16}\right)^{2\alpha}}^0 [\eta_k(s)]_{W^{\alpha,2}(B_{13/16})}^2 \, ds \leq C \,.
\end{align}
We now consider $\psi_k := \lvert \eta_k \rvert^\frac{p-1}{2} \,$. 
Using Lemma~\ref{lem:Sobolev} applied with $r=\frac{13}{16}$, $s= \frac{27}{32}$, $g(x)= |x|^{\frac{p-1}{2}}$, the local energy inequality \eqref{eqn:suit-weak-tested} for $\psi_k$ and proceeding as before, using also that $p>3$, we thus have for any $t \in [-(13/16)^{2\alpha}, 0]$ that
\begin{align}
&\int_{B_{13/16}} \psi_k^2(x,t) \, dx + \int_{-\left(\frac{13}{16}\right)^{2\alpha}}^t [\psi_k(s)]_{W^{\alpha,2}(B_{13/16})} \, ds
\\
&\lesssim 
\int_{B_{27/32}}  \lvert \eta_k \rvert^{p-1}(x,t) \, dx + \int_{-\left(\frac{27}{32}\right)^{2\alpha}}^t \int_{B_{27/32}^*} y^b { \lvert \overline \nabla} \lvert \eta_k^* \rvert^\frac{p-1}{2} \rvert^2 \, dx \, dy \, ds
 +\int_{Q_{27/32}^*} y^b \rvert \eta_k^*\lvert^{p-1} \, dx \, dy \, ds
\\
&\lesssim  \int_{Q_{7/8}}\big( \lvert \eta_k \rvert^{p-1}+ \lvert \eta_k \rvert^p+ \lvert u_k \rvert^p \big)  \, dx \, ds  +\int_{Q_{7/8}^*} y^b \rvert \eta_k^*\lvert^{p-1} \, dx \, dy \, ds \\
&\lesssim \int_{Q_1}\big( \lvert \eta_k \rvert^{p-1}+ \lvert \eta_k \rvert^p+ \lvert u_k \rvert^p \big) \, dx \, ds +  \int_{-1}^0 \sup_{R \geq 1} \frac{1}{R^{(p-1)\sigma}} \bigg(\,\mean{B_R} \lvert \eta_k(x,s) - [\eta_k(s)] \rvert \, dx \bigg)^{p-1}\, ds \\
&\lesssim 1 \, .
\end{align}
Taking the supremum over $t \in [-(13/16)^{2\alpha}, 0]$, we obtain as before the uniform-in-$k$ bound
\begin{align}\label{eq:uniformbound2}
\sup_{t \in [-(13/16)^{2\alpha}, 0]} \int_{B_{13/16}}  \psi_k^2(x,t) \, dx &+ \int_{-\left(\frac{13}{16}\right)^{2\alpha}}^0 [\psi_k(s)]_{W^{\alpha,2}(B_{13/16})}^2 \, ds \leq C \,.
\end{align}
From Sobolev embedding $\dot W^{\alpha, 2} \hookrightarrow L^\frac{2}{1-\alpha}\,,$ we obtain by interpolation that $\norm{\psi_k}_{L^{2(1+\alpha)}(Q_{13/16})} \leq C \,$ uniformly in $k\geq 1$ and hence in particular
$\sup_{k \geq 1} \norm{\eta_k}_{L^{(p-1)(1+\alpha)}(Q_{13/16})} \leq C \, .$

\textit{Step 2: We use an Aubin-Lions type compactness argument to deduce strong convergence of $\eta_k$ in $L^q(Q_{3/4})$ for every $1 \leq q< (p-1)(1+\alpha)\,.$ Since $(p-1)(1+\alpha) >p$ by hypothesis, we deduce in particular that $\eta_ k \to \eta$ strongly in $L^p(Q_{3/4})\,.$ 
 }\\
We may assume $q \in [2, (p-1)(1+\alpha))$. Since the excess uniformly bounds the $L^{3/2}_{loc}$-norm of $\eta_k$, there exists by Banach-Alaoglu a limit $\eta \in L^{3/2}_{loc}(\R^2 \times [-1,0])$ such that $\eta_k \rightharpoonup \eta$ weakly in $L^{3/2}_{loc}(\R^2\times [-1,0])$, up to extracting a subsequence.
By the uniform boundedness established in Step 1, we may assume, up to extracting a further subsequence, that $\eta_k \rightharpoonup \eta$ weakly in $L^q(Q_{13/16})$. We now claim that the latter convergence is in fact strong on the slightly smaller cube $Q_{3/4}$. Indeed, fix $\varepsilon>0$ and a family $\{\phi_\delta \}_{\delta >0}$ of mollifiers in the space variable. For $k, j\geq 1$ we estimate
\begin{equation}
\norm{\eta_k-\eta_j}_{L^q(Q_{3/4})} \leq \norm{\eta_k-\eta_k \ast \phi_\delta}_{L^q(Q_{3/4})} + \norm{\eta_j-\eta_j \ast \phi_\delta}_{L^q(Q_{3/4})} + \norm{(\eta_k-\eta_j) \ast \phi_\delta}_{L^q(Q_{3/4})} \, .
\end{equation}
We claim that the first two contributions converge to $0$ as $\delta\to 0$, uniformly in $k$ and $j$. 
Indeed, we compute for $\delta$ small enough by H\"older and the uniform boundedness of $\eta_k$ in $L^2 W^{\alpha,2}(Q_{13/16})$ 
\begin{align}\label{eqn:tocopy}
\norm{\eta_k-\eta_k &\ast \phi_\delta}_{L^2(Q_{3/4})}^2 = \int_{-\left(\frac{3}{4}\right)^{2\alpha}}^0 \int_{B_{3/4}} \bigg \lvert \int (\eta_k(x)-\eta_k(y)) \phi_\delta(x-y) \, dy \bigg \rvert^2 \, dx \, dt\nonumber\\
&\leq  \int_{-\left(\frac{3}{4}\right)^{2\alpha}}^0 \int_{B_{3/4}} \bigg( \int \frac{\lvert \eta_k(x)-\eta_k(y) \rvert^2}{\lvert x-y \rvert^{2+2\alpha}} \mathds{1}_{\lvert x-y \rvert \leq \delta} \, dy\bigg) \bigg( \int \phi_\delta^2(x-y) \lvert x-y \rvert^{2+2\alpha} \, dy \bigg) \, dx \,dt\nonumber \\
&\leq {\new \pi} \delta^{2\alpha} \norm{\phi}_{L^\infty}^2  \int_{-\left(\frac{3}{4}\right)^{2\alpha}}^0 \int_{B_{{3/4}+\delta}} \int_{B_{{3/4}+\delta}} \frac{\lvert \eta_k(x)-\eta_k(y) \rvert^2}{\lvert x-y \rvert^{2+2\alpha}}  \,dy\, dx \, dt \nonumber \\
&\leq {\new \pi} \delta^{2\alpha} \norm{\phi}_{L^\infty}^2   \int_{-\left(\frac{13}{16}\right)^{2\alpha}}^0[\eta_k(t)]_{W^{\alpha,2}(B_{13/16})}^2 \, dt \leq C \delta^{2\alpha} \, ,
\end{align}
where $C$ does not depend on $k\geq 1$ by Step 1. Since $\eta_k$ is uniformly bounded in $L^{(1+\alpha)(p-1)}(Q_{13/16})$, we have by interpolation for some $\vartheta\in (0, 1]$ and $C \geq 1$ that
\begin{equation}
\norm{\eta_k-\eta_k \ast \phi_\delta}_{L^q(Q_{3/4})} \leq C \delta^{\alpha \vartheta } \, .
\end{equation}
We now fix $\delta$ small enough, independently of $k$, such that this contribution does not exceed $\frac{\varepsilon}{3}\,.$ As for the third term, we consider for fixed $\delta>0$ small, the family of curves $\{t \mapsto \eta_k \ast \phi_\delta\}_{k\geq 1}$.  From the equation, we have the identity
\begin{equation}
\partial_t( \eta_k \ast \phi_\delta) = - (u_k \cdot \nabla \eta_k) \ast \phi_\delta  - (-\Delta)^\alpha \eta_k \ast \phi_\delta \, .
\end{equation}
Observe that $u_k \cdot \nabla \eta_k = \div( u_k \eta_k)$ so that
\begin{equation}
\norm{(u_k \cdot \nabla \eta_k) \ast \phi_\delta }_{L^\frac{pq}{p+q}([-(3/4)^{2\alpha}, 0],W^{1, \infty}(B_{3/4}))} \leq \norm{u_k}_{L^p(Q_{3/4})} \norm{\eta_k}_{L^q(Q_{3/4})} \norm{\phi_\delta}_{W^{2, \frac{pq}{pq-p-q}}(B_{3/4})} \, .
\end{equation}
As for the last term, we have that for $x\in B_{3/4}$
\begin{align*}
\lvert (-\Delta)^\alpha \phi_\delta (x) \rvert &=c_\alpha \left \lvert  \int \frac{\phi_\delta(x)-\phi_\delta(y)}{\lvert x-y \rvert^{2+2\alpha}} \, dy \right \rvert \lesssim \norm{\phi_\delta}_{C^2} \int_{B_1} \frac{\, dy}{\lvert y \rvert^{2\alpha}} + \norm{\phi_\delta }_{L^\infty} \int_{B_1^c} \frac{\,dy}{\lvert x-y \rvert^{2+2\alpha}} \\
&\leq  \frac{C(\delta)}{1+\lvert x \rvert^{2+2\alpha}} \, .
\end{align*}
Analogously,  $\lvert (-\Delta)^\alpha  \nabla \phi_\delta (x) \rvert \leq \frac{C(\delta)}{1 + \lvert x \rvert^{2+2\alpha}}\, .$
We estimate the convolution on dyadic balls for fixed time. We set $\eta_{k, i}:= \eta_k (\mathds{1}_{B_{2^{i+1}}} - \mathds{1}_{B_{2^i}})$ for $i\geq 0$ and estimate
\begin{equation}
\norm{(-\Delta)^\alpha \phi_\delta \ast \eta_k}_{W^{1, \infty}(B_{3/4})} \leq  C(\delta) \bigg( \int_{B_1} \lvert \eta_k \rvert + \sum_{i \geq 0} \norm{(-\Delta)^\alpha \phi_\delta \ast \eta_{k, i}}_{W^{1,\infty}(B_{3/4})} \bigg) \ \, .
\end{equation}
For $i \geq 0$ we observe that for $x \in B_{3/4} $ and $z \in B_{2^{i+1}} \setminus B_{2^i}$ we have $\lvert x - z \rvert \geq 2^{i-2}\,,$ so that 
\begin{align}
\sum_{i \geq 0} \norm{(-\Delta)^\alpha \phi_\delta \ast \eta_{k, i}}_{W^{1,\infty}(B_{3/4})} &\leq C(\delta)\sum_{i\geq 0} \frac{1}{2^{(i-2)(2+2\alpha)}} \int_{B_{2^{i+1}}\setminus B_{2^i}} \lvert \eta_k \rvert \, dy \\ 
&\leq C(\delta) \sum_{i\geq 0} \frac{1}{2^{(i-2)(2\alpha-\sigma)}} \int_{B_{2^{i+1}}\setminus B_{2^i}} \frac{\lvert \eta_k \rvert}{2^{(2+\sigma)(i-2)}}  \, dy\\
 &\leq C(\delta) 	\bigg(  \sup_{R \geq 1} \frac{1}{R^\sigma} \mean{B_R} \lvert \eta_k - [\eta_k]_1 \rvert \, dy + \int_{B_1} \lvert \eta_k\rvert \, dy \bigg) \, .
\end{align}
We conclude by integrating in time that 
\begin{equation}
\norm{(-\Delta)^\alpha \phi_\delta \ast \eta_k}_{L^p([-(3/4)^{2\alpha},0], W^{1, \infty}(B_{3/4}))} \leq C(\delta) \left( E^S(\eta_k;1) + E^{NL}(\eta_k;1) \right) \,.
\end{equation}
Summarizing, we have shown that 
\begin{equation}
\norm{\partial_t (\eta_k \ast \phi_\delta)}_{L^\frac{pq}{p+q}([-(3/4)^{2\alpha}, 0], W^{1,\infty}(B_{3/4}))} \leq C(\delta)
\end{equation}
uniformly in $k\geq 1$. Hence the family of curves $\{t \mapsto \eta_k \ast \phi_\delta\}_{k\geq 1}$ is an equicontinuous sequence with values in a bounded subset of $W^{1, \infty}(B_{3/4})$. By Arzela-Ascoli there exists a uniformly convergent subsequence (which we don't relabel), and in particular, there exists $N=N(\delta)>0$ such that for any $k, j \geq N$ we have 
\begin{equation}
\norm{(\eta_k-\eta_j) \ast \phi_\delta}_{L^q(Q_{3/4})} \leq \frac{\varepsilon}{3} \, ,
\end{equation}
hence $\norm{\eta_k-\eta_j}_{L^q(Q_{3/4})} \leq \varepsilon$ for all $k, j \geq N$ which proves the claim.

\textit{Step 3: Conclusion.} \\
By Step 2 we can pass to the limit in the equation in $Q_{3/4}$ and deduce that $\eta \in L^p(Q_{3/4})$ is a distributional solution of $\partial_t \eta + (-\Delta)^\alpha \eta =0$ in $Q_{3/4} \, .$ Moreover, by weak lower semicontinuity $E^S(\eta;1) + E^{NL}(\eta;1) \leq 1 \, .$
\end{proof}

\section{Decay of the excess}\label{sec:excessdecay}
In this section, we prove the self-improving property of the excess, namely that if the excess is small at any given $Q_r\,,$ there exists a small, fixed scale $\mu_0 \in (0, \frac{1}{2})$, independent of $r$, at which the excess decays between $Q_r$ and $Q_{\mu_0 r}$ - provided that the velocity field has zero average on $B_r$. This requirement is crucial to guarantee the decay of the excess related to the non-local part of the velocity (see $E^V(v_k^2;\mu)$ in the proof of Proposition \ref{prop:excessdecay}). More generally, one could prove this excess decay at scale $\mu_0$ under the weaker assumption that the average of the rescaled velocity $u_r$ on $B_1$ is bounded uniformly in $r$ for $r\in (0,1)$. However, 
since all $L^p$-norms are supercritical with respect to the scaling \eqref{eq:rescaling} of the equation, we will not be able to guarantee such an assumption. In this section, we will also for the first time make use of the structure of the velocity field \eqref{eq:u}. Similar arguments should apply for velocity fields determined from $\theta$ by other singular integral operators.

\begin{prop}[Excess decay]\label{prop:excessdecay} Let $\alpha \in (0, \frac{1}{2})$, $\sigma \in (0, 2\alpha)$ and {\new $p > \max \left\{\frac{1+\alpha}{\alpha}, \frac{2\alpha}{\sigma} \right\}$}. For any $c>0$ and any $\gamma \in (0, \sigma -\frac{2\alpha}{p}) $ there exist universal $\varepsilon_0= \varepsilon_0(\alpha, \sigma, p, c, \gamma) \in (0, \frac{1}{2})$ and $\mu_0= \mu_0(\alpha, \sigma, p, c, \gamma)\in \left(0, \frac{1}{2}\right)$ such that the following holds: Let $Q_r(x,t) \subseteq \R^2 \times (0, \infty)$ 
and let $(\theta, u)$ be a suitable weak solution to \eqref{eq:SQG}--\eqref{eq:Cauchy}. We assume that the velocity field satisfies $\left[u(s)\right]_{B_r(x)} = 0$ for all $s \in [t-r^{2\alpha}, t]$ and is obtained from $\theta$ by 
\begin{equation}\label{eq:uwithf}
u(y,s) = \mathcal{R}^\perp \theta (y,s) + f(s) \,
\end{equation}
for some $f \in L^1([t-r^{2\alpha}, t])$. Then, if $E(\theta, u; x, t,r) \leq r^{1-2\alpha} \varepsilon_0\,,$ the excess decays at scale $\mu_0$, that is
\begin{equation}\label{eq:decay}
E(\theta, u; x,t, \mu_0 r) \leq c {\mu_0}^\gamma E(\theta, u; x,t,r) \, .
\end{equation}
\end{prop}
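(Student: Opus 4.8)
The plan is to prove the excess decay by a linearization (blow-up) argument in the spirit of \cite{Lin1998,CDLM17}, the genuinely new point being to treat the three pieces $E^S$, $E^V$, $E^{NL}$ of the excess — and in particular the nonlocal velocity — by hand.

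\emph{Reduction and blow-up.} Using the rescaling of the excess (Remark \ref{rem:rescalingexcess}) together with a space-time translation, I would reduce to $(x,t)=(0,0)$ and $r=1$, so that the hypothesis reads $E(\theta,u;1)\le \varepsilon_0$ and the claim becomes $E(\theta,u;\mu_0)\le c\mu_0^\gamma E(\theta,u;1)$. Fixing the scale $\mu_0$ (to be chosen at the very end), I argue by contradiction: were the statement false for every $\varepsilon_0=1/k$, one would find suitable weak solutions $(\theta_k,u_k)$ on $\R^2\times[-1,0]$ with $[u_k(s)]_{B_1}=0$, $u_k=\mathcal{R}^\perp\theta_k+f_k(s)$, $E_k:=E(\theta_k,u_k;1)\to 0$, and yet $E(\theta_k,u_k;\mu_0)>c\mu_0^\gamma E_k$. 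Setting $\eta_k:=(\theta_k-(\theta_k)_{Q_1})/E_k$, Lemma \ref{lem:compactness} supplies (up to a subsequence) a limit $\eta$ with $\eta_k\to\eta$ strongly in $L^p(Q_{3/4})$, solving $\partial_t\eta+(-\Delta)^\alpha\eta=0$ on $Q_{3/4}$ and satisfying $E^S(\eta;1)+E^{NL}(\eta;1)\le 1$.

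\emph{Decay of the linear limit.} By interior regularity for the fractional heat equation — where the tail bound $E^{NL}(\eta;1)\le 1$ is exactly the input needed to control the nonlocal part of $(-\Delta)^\alpha$ — the limit $\eta$ is smooth on $Q_{1/2}$ with universal bounds on $\overline\nabla\eta$ and $\partial_t\eta$. A first-order Taylor expansion, subtracting the constant $(\eta)_{Q_\mu}$ (resp.\ $[\eta(s)]_{B_\mu}$), then gives $E^S(\eta;\mu)\le C_0\mu^{2\alpha}$ and, splitting the supremum over $R$ into a near regime (smoothness) and a far regime (the tail bound), $E^{NL}(\eta;\mu)\le C_0\mu^\sigma$ for every $\mu\in(0,\tfrac12)$; both exponents exceed $\gamma$.

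\emph{The velocity and the tail — the crux.} The main work is to show that the normalized excess at scale $\mu_0$ passes to the limit up to an error that still decays. Since $f_k$ is $x$-independent it drops out of $E^V$, and the zero-average condition turns $u_k$ into the mean-zero-on-$B_1$ Riesz field, i.e.\ $v_k:=u_k/E_k=\mathcal{R}^\perp\eta_k-[\mathcal{R}^\perp\eta_k(s)]_{B_1}$. To estimate the oscillation of $v_k$ on $B_{\mu_0}$ I would first subtract the constant $[\eta_k(s)]_{B_{2\mu_0}}$ and then split $\eta_k-[\eta_k(s)]_{B_{2\mu_0}}$ into its restriction to $B_{2\mu_0}$ and to the complement; subtracting this constant \emph{before} splitting is what makes the non-decaying term $\mathcal{R}^\perp(\mathbf 1_{B_{2\mu_0}})$ cancel between the two pieces. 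The local piece is handled by the $L^p$-boundedness of $\mathcal{R}$ (the factor $\mu_0^{-2/p}$ from averaging beaten by $\mu_0^{1+2/p}$ from the $L^p$-size of the centered $\eta_k$) and passes to the limit by the strong $L^p(Q_{3/4})$ convergence; on the far piece the kernel is smooth with $\abs{\overline\nabla K}\lesssim\abs{y}^{-3}$, so the oscillation is $\lesssim\mu_0\int_{\abs y\ge 2\mu_0}\abs{\eta_k-[\eta_k(s)]_{B_{2\mu_0}}}\abs y^{-3}\,dy$, which a dyadic decomposition controls through $E^{NL}$. The decisive point is the far contribution to $E^{NL}(\eta_k;\mu_0)$ itself: there $\eta_k$ converges only weakly, so it must be bounded uniformly through $\sup_k E^{NL}(\eta_k;1)\lesssim 1$; but this control lives on the unit time-window, whereas at scale $\mu_0$ one averages over the shorter window of length $\mu_0^{2\alpha}$, costing a factor $\mu_0^{-2\alpha/p}$ and hence yielding only the decay $\mu_0^{\sigma-2\alpha/p}$. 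This is the \emph{binding} rate, precisely the source of the restriction $\gamma<\sigma-\tfrac{2\alpha}{p}$, and the step where the assumption $[u(s)]_{B_r}=0$ is essential; I expect this nonlocal/velocity estimate to be the main obstacle. Collecting the three bounds gives $\limsup_k E(\theta_k,u_k;\mu_0)/E_k\le C_0\,\mu_0^{\sigma-2\alpha/p}$, and choosing $\mu_0$ so small that $C_0\,\mu_0^{\sigma-2\alpha/p}\le \tfrac{c}{2}\mu_0^\gamma$ (possible since $\gamma<\sigma-\tfrac{2\alpha}{p}$) contradicts $E(\theta_k,u_k;\mu_0)>c\mu_0^\gamma E_k$; this fixes $\mu_0$, and the contradiction simultaneously produces the threshold $\varepsilon_0$.
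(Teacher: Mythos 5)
Your proposal is correct and follows essentially the same route as the paper: reduction to $r=1$, blow-up by contradiction, the compactness of Lemma \ref{lem:compactness}, interior regularity of the limiting fractional caloric function (Lemma \ref{lem:linearizedeq}), and the identification of $\mu^{\sigma-2\alpha/p}$ --- coming from measuring the uniform tail bound $E^{NL}(\eta_k;1)\le 1$, which lives on the unit time window, over the shorter window of length $\mu^{2\alpha}$ --- as the binding rate that forces $\gamma<\sigma-\frac{2\alpha}{p}$. The one place where you genuinely diverge is the velocity excess: the paper cuts off at the \emph{fixed} scale $3/8$--$1/2$, treats the near part $\mathcal R^\perp(\eta_k\chi)$ by strong $L^p$ convergence plus Schauder estimates on the limit, the far part by a pointwise Lipschitz bound on the Riesz transform of a function supported away from the origin (Lemma \ref{lem:boundfirstder}), and disposes of the possibly unbounded constant $M_k/E_k$ via spherically symmetric truncations $\chi_\rho$ whose Riesz transform vanishes at the origin and decays like $\rho^{-1}$; you instead center at $[\eta_k(s)]_{B_{2\mu_0}}$ and cut at the scale $2\mu_0$ itself. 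Both work, and your observation that centering \emph{before} splitting cancels the non-decaying term $\mathcal R^\perp(\mathds{1}_{B_{2\mu_0}})$ (whose oscillation on $B_{\mu_0}$ is $O(1)$ by scale invariance) is exactly the point that makes the $\mu_0$-scale cutoff admissible; just note that your identity $v_k=\mathcal R^\perp\eta_k-[\mathcal R^\perp\eta_k(s)]_{B_1}$ silently declares the Riesz transform of the constant $M_k/E_k$ to be zero, which is the convention the paper justifies with the $\chi_\rho$ argument. One small imprecision: by Lemma \ref{lem:linearizedeq} the limit satisfies only $\partial_t\eta\in L^p_tL^\infty_x$, not $L^\infty$, so the correct decay is $E^S(\eta;\mu)\lesssim\mu^{2\alpha(1-1/p)}$ rather than $\mu^{2\alpha}$ --- harmless, since this exponent still exceeds $\sigma-\frac{2\alpha}{p}>\gamma$.
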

\begin{remark} If in \eqref{eq:uwithf} $f=0$ we recover simply the SQG equation. We will need the freedom to subtract a function of time $f$  from the velocity field $u$ in order to satisfy the zero-average assumption (see Lemma \ref{lem:changeofvar}).
\end{remark}

We will need the following auxiliary Lemma. 

\begin{lemma}\label{lem:boundfirstder} Assume $\theta\in L^2_{loc}(\R^2)$ with $\supp \theta \subseteq \left(\overline{B}_{3/8}\right)^c$ and that for some $\sigma \in (0,1)$ we have
\begin{equation}
\sup_{R\geq 1} \frac{1}{R^\sigma} \mean{B_R} \lvert \theta (x) \rvert \, dx < + \infty \,.
\end{equation}
Then $\mathcal{R}^\perp \theta \in C^\infty(B_{3/8})$ and there exists a universal $C=C(\sigma)>0$  such that
\begin{equation}
\norm{D \mathcal{R}^\perp \theta}_{L^\infty(B_{1/4})} \leq C \sup_{R\geq 1} \frac{1}{R^\sigma} \mean{B_R} \lvert \theta(x) \rvert \,dx  \,.
\end{equation}
\end{lemma}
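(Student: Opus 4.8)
The plan is to argue directly from the singular integral representation of the Riesz transform, exploiting that $\theta$ is supported away from $\overline{B}_{3/8}$ so that the only subtlety is the convergence of the integral at spatial infinity, which is precisely what the growth hypothesis controls. First I would record a purely topological fact: since $\supp \theta$ is closed and contained in the open set $(\overline{B}_{3/8})^c$, it has \emph{strictly positive} distance from the compact ball $\overline{B}_{3/8}$. Indeed, a sequence $z_n \in \supp\theta$ with $|z_n| \to 3/8$ would be bounded, hence have a subsequential limit $z_*$ with $|z_*| = 3/8$; but $z_* \in \supp\theta$ by closedness, contradicting $\supp\theta \subseteq (\overline{B}_{3/8})^c$. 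Thus $d_0 := \dist(\overline{B}_{3/8}, \supp\theta) > 0$, the principal value is moot for $x \in B_{3/8}$, and the kernel $x \mapsto (x_i - z_i)/|x-z|^3$ together with all its $x$-derivatives is smooth and bounded uniformly for $z \in \supp\theta$ with $|z|$ bounded. Differentiation under the integral sign then yields $\mathcal{R}^\perp \theta \in C^\infty(B_{3/8})$, provided the differentiated integrals converge absolutely — which is exactly the tail estimate carried out next.

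For the quantitative bound on $B_{1/4}$, I would differentiate the representation,
\[
\partial_{x_j}\mathcal{R}_i\theta(x) = c\int_{\R^2}\partial_{x_j}\Big(\frac{x_i-z_i}{|x-z|^3}\Big)\theta(z)\,dz,
\]
and note that the differentiated kernel is homogeneous of degree $-3$, so $|\partial_{x_j}K(x-z)| \lesssim |x-z|^{-3}$. Writing $M := \sup_{R \geq 1} R^{-\sigma}\mean{B_R}|\theta(z)|\,dz$, the hypothesis gives $\int_{B_R}|\theta(z)|\,dz \leq \pi M R^{\sigma+2}$ for all $R \geq 1$. Then I would split $\R^2$ into the near region $B_1 \setminus B_{3/8}$ and the dyadic annuli $B_{2^{i+1}} \setminus B_{2^i}$ for $i \geq 0$. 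On the near region $|x-z| \geq 1/8$ for $x \in B_{1/4}$, so this contributes $\lesssim \int_{B_1}|\theta(z)|\,dz \lesssim M$ (taking $R=1$ in the supremum). On the $i$-th annulus $|x-z| \gtrsim 2^i$, whence the kernel is $\lesssim 2^{-3i}$ and the mass of $\theta$ there is $\lesssim M\,2^{(\sigma+2)i}$, giving a contribution $\lesssim M\,2^{(\sigma-1)i}$.

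Finally, summing the geometric series $\sum_{i\geq 0} 2^{(\sigma-1)i} = (1-2^{\sigma-1})^{-1} =: C(\sigma) < \infty$, which converges precisely because $\sigma < 1$, yields $\norm{D\mathcal{R}^\perp\theta}_{L^\infty(B_{1/4})} \lesssim C(\sigma)\, M$, the claimed estimate. The identical dyadic argument applied to an $m$-th derivative, whose kernel decays like $|x-z|^{-(2+m)}$ and thus produces the summable series $\sum_i 2^{(\sigma-m)i}$ for every $m \geq 1$, confirms absolute convergence of all the derivative integrals and hence closes the smoothness claim of the first paragraph. The one genuinely delicate point — and the crux of the lemma — is the matching between the decay of the \emph{first} derivative of the Riesz kernel and the growth of $\theta$: differentiating the kernel gains exactly one power of $|x-z|$, which is what turns the borderline mass growth $R^{\sigma+2}$ into a summable tail, and this works if and only if $\sigma < 1$.
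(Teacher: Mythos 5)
Your proof is correct and follows essentially the same route as the paper's: the quantitative bound is obtained by the identical decomposition into a near region plus dyadic annuli, pairing the $|x-z|^{-3}$ decay of the differentiated kernel against the mass growth $R^{\sigma+2}$ and summing the geometric series $\sum_i 2^{(\sigma-1)i}$, which converges precisely because $\sigma<1$. The only (minor and harmless) divergence is in the qualitative $C^\infty$ claim: the paper deduces it from $(-\Delta)^{1/2}\mathcal{R}^\perp\theta=0$ on $\overline{B}_{3/8}$ and interior regularity for the half-Laplacian, whereas you differentiate under the integral sign and check absolute convergence of all higher-order derivative integrals by the same dyadic argument — both are valid.
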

\begin{proof}[Proof of Lemma \ref{lem:boundfirstder}] Observe that from $(-\Delta)^\frac{1}{2} \mathcal{R}^\perp \theta =0$ on $\overline{B}_{3/8}$, we infer that $\mathcal{R}^\perp \theta  \in C^\infty(B_{3/8}) \, .$ Moreover for $i,j=1,2$ and $x \in B_{1/4}$, we notice that the integral representation is no longer singular and we can compute by integration by parts 
\begin{align}
\partial_j \mathcal{R}^i\theta(x)= \int_{\lvert z \rvert \geq \frac{3}{8}} \frac{x_i-z_i}{\lvert x-z \rvert} \partial_j \theta(z) \, dz=- \int_{\lvert z \rvert \geq \frac{3}{8}} \partial_j \left( \frac{x_i-z_i}{\lvert x-z \rvert^3} \right) \theta(z) \, dz  \,,
\end{align}
where we used that the boundary terms at $\{ \lvert z \rvert = \frac{3}{8} \}$ and at infinity vanish.
Observe that for $x \in B_{1/4}$ and $z \in B_{2^{i+1}} \setminus B_{2^i}$ we have $\lvert x-z \rvert \geq 2^i -\frac{1}{4} \geq 2^{i-1}$ for $i \geq -1 \, .$ Thus 
\begin{align}
\lvert \partial_j \mathcal{R}^i \theta(x) \rvert &\leq \int_{\frac{3}{8} \leq \lvert z \rvert \leq \frac 12} \Big| \partial_j \Big( \frac{x_i-z_i}{\lvert x-z \rvert^3} \Big)   \theta (z) \Big| \, dz +\sum_{i \geq -1} \int_{B_{2^{i+1}} \setminus B_{2^i}} \Big| \partial_j \Big( \frac{x_i-z_i}{\lvert x-z \rvert^3} \Big)  \theta(z)\Big| \, dz \\
&\leq C \bigg( \int_{B_1} \lvert \theta (z) \rvert \, dz +  \sum_{i \geq -1} \int_{B_{2^{i+1}} \setminus B_{2^i}}  \frac{ \lvert \theta (z)\rvert}{2^{3(i-1)}} \, dz  \bigg) \\
&\leq C \Big( \sum_{i \geq -1} \frac{1}{2^{(i-1)(1-\sigma)}} \Big) \bigg( \sup_{R\geq 1} \frac{1}{R^\sigma} \mean{B_R} \lvert \theta (z) \rvert  \, dz   \bigg) \,.
\end{align}
\end{proof}
\begin{proof}[Proof of Proposition \ref{prop:excessdecay}]
By translation and scaling invariance, we may assume w.l.o.g. $(x,t)=(0, 0)$ and $r=1\,.$ We argue by contradiction. Then there exists a sequence $(\theta_k, u_k)$ of suitable weak solutions to \eqref{eq:SQG}--\eqref{eq:Cauchy} such that
\begin{itemize}
\item $E(\theta_k, u_k; \mu ) > c \mu^\gamma E(\theta_k, u_k;1) $ for all $\mu \in (0, \frac{1}{2})\,,$
\item $\lim_{k \to \infty} E(\theta_k, u_k; 1) = 0\,,$
\item $[u_k(s)]_{B_1}= 0$ for all $s\in [-1,0]$ and for all $k \geq 1\,$
\item $u_k(y,s)= \mathcal{R}^\perp \theta(y,s) + f_k(s)\,$ for some $f_k \in L^1([-1, 0])\,.$
\end{itemize}
We set $E_k:=E(\theta_k, u_k; 1)$ and $M_k := (\theta_k)_{Q_1}$. We will consider the rescaled and shifted sequence
\begin{equation}
\eta_k := \frac{\theta_k - M_k}{E_k} \,
\quad  \text{and } \quad v_k := \frac{u_k }{E_k} \, .
\end{equation}
By construction, $(\eta_k)_{Q_1}=0$ and $E(\eta_k, v_k;1)=1\, .$ In particular, we have for all $\mu \in (0, \frac{1}{2})$ that
\begin{equation}\label{eq:tocontradict}
E(\eta_k, v_k; \mu )  >  c \mu^{\gamma}\,.
\end{equation}
We will now take the limit $k \to \infty$ and argue that \eqref{eq:tocontradict} contradicts the excess decay dictated by the linear limit equation. Indeed, by Lemma \ref{lem:compactness}, the sequence $\eta_k$ converges weakly to $\eta$ in $L^{3/2}_{loc}(\R^2 \times [-1,0])$ and moreover, $\eta_k \to \eta$ strongly in $L^p(Q_{3/4})\,.$ Hence we have for $\mu \in (0, \frac{1}{2})$
\begin{equation}
E^S(\eta; \mu) = \lim_{k \to \infty} E^S(\eta_k; \mu) \,.
\end{equation}
We also know from Lemma \ref{lem:compactness} that $\eta \in L^p(Q_{3/4})$ solves the fractional heat equation $\partial_t \eta +(-\Delta)^\alpha \eta=0$ on $Q_{3/4}\, $ with $E^S(\eta;1)+ E^{NL}(\eta;1) \leq 1\, .$ In particular, we deduce from Lemma \ref{lem:linearizedeq} that $\eta$ is smooth (in space) on $Q_{1/2}$ and that $\eta \in C^{ 1-1/p}(Q_{1/2}) $ with the estimate
\begin{align}\label{eq:holderconteta}
\norm{\eta}_{L^\infty([-(1/2)^{2\alpha}, 0], C^1(B_{1/2}))} + \norm{\eta}_{C^{1-\frac{1}{p}}(Q_{1/2})} \leq \bar C( E^S(\eta; 1) + E^{NL}(\eta;1))\leq  \bar C \,.
\end{align}
In particular, we infer that for $\mu \in (0, \frac{1}{2})$
\begin{equation}\label{eq:excessS}
 \lim_{k \to \infty} E^S(\eta_k; \mu) =E^S(\eta; \mu) \leq \bar C \mu^{2\alpha(1-\frac{1}{p})} \,.
\end{equation}
Let us now consider the non-local part of the excess. We split
\begin{align}
E^{NL}(\eta_k;\mu) &\leq \bigg(\, \mean{-\mu^{2\alpha}}^0 \sup_{\frac{\mu}{4} \leq R < \frac{1}{4}} \left(\frac{\mu}{R}\right)^{\sigma p} \bigg(\,\mean{B_R} \lvert \eta_k(x,t) -[\eta_k(t)]_{B_\mu} \rvert^\frac{3}{2} \, dx \bigg)^\frac{2p}{3}\, dt \bigg)^\frac{1}{p}\\
&+\bigg( \,\mean{-\mu^{2\alpha}}^0 \sup_{R \geq \frac{1}{4}} \left(\frac{\mu}{R}\right)^{\sigma p} \bigg(\,\mean{B_R} \lvert \eta_k(x,t) -[\eta_k(t)]_{B_\mu} \rvert^\frac{3}{2} \, dx \bigg)^\frac{2p}{3} \, dt \bigg)^\frac{1}{p}\, .
\end{align}
We estimate the second term by adding and subtracting $[\eta_k(t)]_{B_1}$ for fixed time $t$. In the sequel $C'=C'(\alpha, \sigma)$ will denote a universal constant which may change line by line. Using that $E^{NL}(\eta_k;1)+E^S(\eta_k;1) \leq 1$ for all $k \geq 1$, we obtain that 
\begin{align}
\bigg(\, \mean{-\mu^{2\alpha}}^0 &\sup_{R \geq \frac{1}{4}} \left(\, \frac{\mu}{R}\right)^{\sigma p} \bigg(\,\mean{B_R} \lvert \eta_k(x,t) -[\eta_k(t)]_{B_\mu} \rvert^\frac{3}{2} \, dx \bigg)^\frac{2p}{3} \, dt \bigg)^\frac{1}{p} \\
&\leq \mu^{\sigma - \frac{2\alpha}{p}}  E(\eta_k;1) +  (4\mu)^\sigma \bigg( \,\mean{-\mu^{2\alpha}}^0 \lvert [\eta_k(t)]_{B_1}- [\eta_k(t)]_{B_\mu} \rvert^p \, dt \bigg)^\frac{1}{p}
\\
&\leq \mu^{\sigma - \frac{2\alpha}{p}} + (4\mu)^\sigma  \Bigg(\bigg(\, \mean{-\mu^{2\alpha}}^0 \lvert [\eta_k(t)]_{B_1}- [\eta_k(t) ]_{B_{1/2}} \rvert^p \, dt \bigg)^\frac{1}{p} +  \bigg(\, \mean{-\mu^{2\alpha}}^0 \lvert [\eta_k(t)]_{B_{1/2}}- [\eta_k(t)]_{B_\mu} \rvert^p \, dt \bigg)^\frac{1}{p}\Bigg) \\
&\leq \mu^{\sigma - \frac{2\alpha}{p}}+ C' \mu^{\sigma - \frac{2\alpha}{p}} E^S(\eta_k;1) + (4\mu)^\sigma \bigg(\, \mean{-\mu^{2\alpha}}^0 \lvert [\eta_k(t)]_{B_{1/2}}- [\eta_k(t)]_{B_\mu} \rvert^p \, dt \bigg)^\frac{1}{p} \\
&\leq C' \mu^{\sigma-\frac{2\alpha}{p}} +(4\mu)^\sigma  \bigg(\, \mean{-\mu^{2\alpha}}^0 \lvert [\eta_k(t)]_{B_{1/2}}- [\eta_k(t)]_{B_\mu} \rvert^p \, dt \bigg)^\frac{1}{p}
\, .
\end{align}
We infer, using the strong converge of $\eta_k \rightarrow \eta$ in $L^p(Q_{3/4})$, that 
\begin{align}
\liminf_{k \to \infty} E^{NL}(\eta_k;\mu) \leq C' \mu^{\sigma-\frac{2\alpha}{p}}   &+\,  (4\mu)^\sigma  \bigg(\, \mean{-\mu^{2\alpha}}^0 \lvert [\eta(t)]_{B_{1/2}}- [\eta(t)]_{B_\mu} \rvert^p \, dt \bigg)^\frac{1}{p}\\
&+\bigg(\, \mean{-\mu^{2\alpha}}^0 \sup_{\frac{\mu}{4} \leq R < \frac{1}{4}} \left(\frac{\mu}{R}\right)^{\sigma p} \bigg(\, \mean{B_R} \lvert \eta(x,t) -[\eta(t)]_{B_\mu} \rvert^\frac{3}{2}\, dx \bigg)^\frac{2p}{3} \, dt\bigg)^\frac{1}{p} \, .
\end{align}
Using \eqref{eq:holderconteta} again, we obtain that $ \lvert [\eta(t)]_{B_\mu}-[ \eta(t)]_{B_{1/2}} \rvert  \leq \bar C$ uniformly in time as well as
\begin{align}
\Bigg(\, \mean{-\mu^{2\alpha}}^0 \sup_{\frac{\mu}{4} \leq R < \frac{1}{4}} \left(\frac{\mu}{R}\right)^{\sigma p} \bigg(\,\mean{B_R} \lvert \eta(x,t) -[\eta(t)]_{B_\mu} \rvert^\frac{3}{2} \bigg)^\frac{2p}{3} \Bigg)^\frac{1}{p} 
\leq \bar C \bigg( \,\mean{-\mu^{2\alpha}}^0 \sup_{\frac{\mu}{4} \leq R < \frac{1}{4}} \left(\frac{\mu}{R}\right)^{\sigma p} R^p  \bigg)^\frac{1}{p} \leq \bar C \mu^\sigma \, .
\end{align}
We conclude that
\begin{equation}\label{eq:excessNL}
\liminf_{k \to \infty} E^{NL}(\eta_k;\mu) \leq  C' \mu^{\sigma-\frac{2\alpha}{p}}  \,.
\end{equation}
Finally, let us consider the part of the excess which is related to the velocity $v_k\,.$ We observe that, using the structure of the velocity \eqref{eq:uwithf},
\begin{equation}
E^V(v_k;\mu) = \bigg(\, \mean{Q_\mu} \big| \mathcal{R}^\perp\big(E_k^{-1} \theta_k\big)(x,t)- \big[\mathcal{R}^\perp\big(E_k^{-1} \theta_k\big)(t)\big]_{B_\mu} \big|^p \, dx \, dt \bigg)^\frac{1}{p} \,.
\end{equation}
We write $\mathcal{R}^\perp(E_k^{-1} \theta_k)= v_k^1+ v_k^2$ where we introduce $$v_k^1:=\mathcal{R}^\perp \left(\eta_k \chi \right)$$
for some cut-off $\chi$ between $B_{3/8}$ and $B_{1/2}\,$. Correspondingly, we write
\begin{equation}
E^V(v_k; \mu) \leq \bigg(\,\mean{Q_\mu} \lvert v_k^1 - [v_k^1(t)]_{B_\mu} \rvert^p \, dx \,dt  \bigg)^\frac{1}{p} + \bigg(\,\mean{Q_\mu} \lvert v_k^2 - [v_k^2(t)]_{B_\mu} \rvert^p \, dx \, dt\bigg)^\frac{1}{p} \, .
\end{equation}
By Calderon-Zygmund estimates, we infer that $v_k^1 \rightarrow \mathcal{R}^\perp (\eta \chi)=:v^1$ strongly in $L^p(Q_{3/4})$. {Moreover by Schauder estimates \cite[Proposition 2.8]{Silvestre2007}, we have for fixed time 
$$[v^1(t)]_{C^{1-\frac{1}{p}}(\R^2)} \leq C' \norm{\eta(t)\chi}_{C^{1-\frac 1p}(\R^2)} \leq C' \norm{\eta(t)}_{C^{1-\frac 1p}(Q_\frac{1}{2})} \leq C' \bar C$$ 
uniformly in $t\in [-(1/2)^{2\alpha}, 0]$ by \eqref{eq:holderconteta}.} We conclude
that 
\begin{equation}\label{eq:excessvk1}
\lim_{k \to \infty} \bigg(\,\mean{Q_\mu} \lvert v_k^1 - [v_k^1(t)]_{B_\mu} \rvert^p \, dx \, dt \bigg)^\frac{1}{p} = \bigg(\, \mean{Q_\mu} \lvert v^1-[v^1(t)]_{B_\mu} \rvert^p \, dx \, dt \bigg)^\frac{1}{p}\leq C' \mu^{1-\frac 1 p} \, .
\end{equation}
We now come to the excess related to $v_k^2\,.$ By construction, 
$$v_k^2 = \mathcal{R}^\perp \bigg(\frac{\theta_k}{E_k} - \eta_k \chi\bigg) = \mathcal{R}^\perp \bigg( \eta_k (1-\chi) + \frac{M_k}{E_k} \bigg)\,.$$ Correspondingly, we define $w_{k,\rho}^1:=\mathcal{R}^\perp( \eta_k(1-\chi) \chi_\rho)$ and $w_{k,\rho}^2 := \mathcal{R}^\perp \left( \frac{M_k}{E_k} \chi_\rho \right)$ for some radially symmetric cut-off $\chi_\rho$ between $B_\rho$ and $B_{\rho+1}\,.$ By Calderon-Zygmund, we have for fixed time $t$ that $w_{k,\rho}^1(t) + w_{k,\rho}^2(t) \rightarrow v_k^2(t)$ as $\rho\to \infty$ strongly in $L^p(\R^2)\, .$ In particular, 
\begin{align}
\bigg(\,\mean{Q_\mu} | v_k^2 &- [v_k^2(t)]_{B_\mu} |^p \, dx \, dt \bigg)^\frac{1}{p} = \lim_{\rho \to \infty}\bigg(\,\mean{Q_\mu} \lvert w_{k,\rho}^1 + w_{k, \rho}^2 - [(w_{k,\rho}^1 + w_{k, \rho}^2)(t)]_{B_\mu} \rvert^p \, dx \, dt \bigg)^\frac{1}{p}  \\
&\leq {\new \limsup_{\rho \to \infty}}\bigg(\,\mean{Q_\mu} \lvert w_{k,\rho}^1 - [w_{k,\rho}^1(t) ]_{B_\mu} \rvert^p \, dx \, dt \bigg)^\frac{1}{p} + {\new \limsup_{\rho \to \infty}}\bigg(\,\mean{B_\mu} \lvert w_{k, \rho}^2 - [w_{k, \rho}^2(t)]_{B_\mu} \rvert^p \, dx \, dt \bigg)^\frac{1}{p}  \, .
\end{align} 
Let us consider first $w_{k,\rho}^1$. We apply Lemma \ref{lem:boundfirstder} to $w_{k,\rho}^1$ to deduce that for fixed time $t$
\begin{align}
[w_{k,\rho}^1(t)]_{\rm Lip(B_{1/4})} &\leq C' \sup_{R \geq 1} \frac{1}{R^\sigma} \mean{B_R} \lvert \eta_k \rvert(x,t) \, dx \\
&\leq C' \bigg( \sup_{R \geq 1} \frac{1}{R^\sigma} \mean{B_R} \lvert \eta_k(x,t) - [\eta_k(t)]_{B_1} \rvert \, dx + \int_{B_1} \lvert \eta_k\rvert (x,t) \, dx \bigg)
\, .
\end{align}
Integrating in time, we infer that uniformly in $\rho \geq 1$
\begin{equation}
\norm{w_{k,\rho}^1}_{L^p([-1, 0], \rm Lip(B_{1/4}))} \leq C' \left( E^S(\eta_k;1) +E^{NL}(\eta_k;1)\right) \, .
\end{equation}
We deduce that for $\mu \in (0, \frac{1}{4})$ we have
\begin{equation}\label{eq:excessvk2}
\lim_{\rho \to \infty}\bigg(\, \mean{Q_\mu} \lvert w_{k, \rho}^1(x,t) - [w_{k,\rho}^1(t)]_{B_\mu} \rvert^p \, dx \, dt \bigg)^\frac{1}{p} \leq C' \mu \, .
\end{equation}
We now come to the contribution of $w_{k, \rho}^2$. Observe that $(-\Delta)^{1/2} \, w_{k,\rho}^2 = 0$ in $B_1$ for $\rho\geq 1$ such that $w_{k,\rho}^2$ is smooth in the inside of $B_1$. Recall moreover, that we have the integral representation (in the principal value sense)
\begin{equation}
\big(w_{k, \rho}^2\big)^\perp (x) =-\,  c\, \frac{M_k}{E_k}\int \frac{x-y}{\lvert x-y \rvert^3} \chi_\rho(y) \, dy \,
\end{equation}
so that, by spherical symmetry of $\chi_\rho$, we immediately infer $w_{k,\rho}^2(0)=0 \, .$ Moreover, for $x \in B_{1/2}$ we have
\begin{align}
\lvert w_{k, \rho}^2 (x) \rvert 
&=c \frac{\lvert M_k \rvert}{E_k} \bigg \lvert \int_{\rho < \vert x-y \rvert < \rho+1} \frac{y}{\lvert y \rvert^3} \chi_\rho(x-y) \, dy \bigg \rvert \leq c \frac{\lvert M_k \rvert}{E_k}  \pi ((\rho+1)^2 - \rho^2) \left(\frac{\rho}{2}\right)^{-2}=C' \frac{\lvert M_k \rvert}{E_k} \rho^{-1} \,.
\end{align}
Thus for fixed $k\geq 1,$ we have $$\lim_{\rho \to 0} \norm{w_{k,\rho}^2}_{L^\infty(B_{1/2})} = 0\,,$$ so that excess associated to $v_{k,2}$ is controlled by \eqref{eq:excessvk2}.
Collecting the terms \eqref{eq:excessS} and \eqref{eq:excessNL}
and taking the limes inferior $k \to \infty$ in \eqref{eq:tocontradict} we have obtained, for a universal constant $C'=C'(\alpha, \sigma)>0\,,$ that 
\begin{equation}
c{\mu}^\gamma \leq C' \mu^{\sigma- \frac{2\alpha}{p}}= {\mu}^\gamma  C'\mu^{\sigma - \frac{2\alpha}{p}-\gamma} \,.
\end{equation} 
for all $\mu \in (0, \frac{1}{4}) \, .$ We reach the desired contradiction for 
\begin{equation*}
\mu \leq \left(\frac{c}{C' } \right)^\frac{1}{\sigma - \frac{2\alpha}{p}-\gamma} \, . 
\qedhere
\end{equation*}
\end{proof}
%

\section{Iteration of the excess decay}\label{sec:iter}
In this section, we prove the decay of the excess on all scales. We iteratively define shifted rescalings of $(\theta, u)$ verifying the zero average assumption of Proposition \ref{prop:excessdecay} as well as \eqref{eq:uwithf} and therefore allowing the decay of the excess when passing at scale $\mu_0$. From the decay of the excess on all scales, we deduce H\"older continuity by means of Campanato's Theorem. In contrast to  Navier-Stokes, we need our estimates to be quantitative, since it is not known whether local smoothness for SQG follows from a mere $L^\infty$-bound; instead we need to prove spatial $C^{\delta}$-H\"older continuity of the velocity for a $\delta >1-2\alpha$ (see Lemma \ref{lem:hoeldertosmooth}). The main mechanism of the iteration is the invariance of the equation under the following change of variables realizing the zero average assumption on $B_{1/4}. $ The latter has been exploited previously in \cite{CaffarelliVasseur2010, ConstantinWu2009}. 

\begin{lemma}[Change of variables at unit scale]\label{lem:changeofvar} Let $\alpha \in (\frac{1}{4}, \frac 12)$ and $\sigma \in (0, 2\alpha)$. Let $(\theta, u)$ be a suitable weak solution of \eqref{eq:SQG} on $\R^2 \times [-4^{2\alpha},0]$. 
Fix $(x, t) \in Q_1.$ Define $\theta_0(y,s):= \theta(y+x+x_0(s), s+t)\, $ and $u_0(y,s):=u(y+x+x_0(s), s+t) - \dot x_0(s)\,,$ where 
\begin{equation}\label{eq:ODE}
\begin{cases}\dot x_0(s) &= \displaystyle \mean{B_\frac{1}{4}} u(y + x_0(s)+{\new x},s+t) \,dy \\
x_0(0)&=0 \,.
\end{cases}
\end{equation}
Then $(\theta_0, u_0)$ is a suitable weak solution of \eqref{eq:SQG} on $\R^2 \times [-1, 0]$ with $[u_0(s)]_{B_{1/4}}=0$ for $s \in[-1,0]\, .$ Moreover, there exists universal $\varepsilon_1=\varepsilon_1(p, \alpha) \in (0,\frac{1}{2})$ and $C_1\geq1$  such that if 
\begin{equation}\label{eq:changeofvarhyp}
\left(\mean{Q_1(x,t)} \lvert u \rvert^p \,dy \, ds\right)^\frac{1}{p} \leq \varepsilon_1\, ,
\end{equation}
then 
\begin{equation}
E(\theta_0, u_0 ; \frac{1}{4}) \leq C_1 E(\theta, u;x,t,1) \, .
\end{equation}
\end{lemma}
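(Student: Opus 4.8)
The plan is to proceed in four stages: solve the defining ODE and control the size of the drift $x_0$, record the structural properties, establish the invariance of the local energy inequalities, and finally compare the excesses. \textbf{Stage 1: the flow $x_0$.} As a function of $x_0$, the right-hand side of \eqref{eq:ODE} is the average of $u(\cdot,s+t)$ over $B_{1/4}(x+x_0)$, which is continuous in $x_0$ (continuity of translations in $L^1$) and, for $|x_0|\le\tfrac12$, dominated by $C\int_{B_1(x)}|u(\cdot,s+t)|$, an $L^1_s$ function by \eqref{eq:changeofvarhyp}. Carath\'eodory's theorem yields a solution, and I would run a continuation/bootstrap: as long as $x_0(s)\in B_{1/2}$ one has $B_{1/4}(x+x_0(s))\subset B_1(x)$, so by H\"older
$$|x_0(s)|\le \int_{-1}^{0}\Big|\mean{B_{1/4}} u(y+x_0+x,\cdot)\,dy\Big|\,ds\le C\int_{Q_1(x,t)}|u|\le C\,|Q_1(x,t)|^{1-\frac1p}\Big(\int_{Q_1(x,t)}|u|^p\Big)^{\frac1p}\le C\varepsilon_1 .$$
Choosing $\varepsilon_1$ so small that $C\varepsilon_1<\tfrac1{16}$ closes the bootstrap, gives existence on all of $[-1,0]$, and records the crucial bound $\|x_0\|_{L^\infty([-1,0])}\le C\varepsilon_1$. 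Here $\alpha>\tfrac14$ enters precisely to guarantee $s+t\in[-2,0]\subset[-4^{2\alpha},0]$, so that $u,\theta$ are available on the whole range.

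\textbf{Stage 2/3: structure and energy inequalities.} With $z=y+x+x_0(s)$, $\tau=s+t$, a direct computation gives $\partial_s\theta_0+u_0\cdot\nabla_y\theta_0=(\partial_\tau\theta+u\cdot\nabla_z\theta)(z,\tau)$, the two $\dot x_0$-contributions cancelling, while $(-\Delta_y)^\alpha\theta_0=((-\Delta_z)^\alpha\theta)(z,\tau)$ by translation invariance; thus $(\theta_0,u_0)$ solves \eqref{eq:SQG} with $\div u_0=0$, and since $\mathcal{R}^\perp$ commutes with translations, $u_0=\mathcal{R}^\perp\theta_0-\dot x_0(s)$, i.e. exactly the form \eqref{eq:uwithf} with $f=-\dot x_0\in L^1$. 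The $L^\infty L^2\cap L^2W^{\alpha,2}$ bounds transfer under the (slicewise volume-preserving) shift, and $[u_0(s)]_{B_{1/4}}=0$ is immediate from \eqref{eq:ODE}. For the energy inequalities I would insert, into \eqref{eqn:suit-weak-tested2}--\eqref{eqn:suit-weak-tested} written for $\theta$, the test function $\varphi(z,Y,\tau):=\psi(z-x-x_0(\tau-t),Y,\tau-t)$ associated to any admissible $\psi$ for the $(\theta_0,u_0)$-problem. Since $\nabla_z\varphi=\nabla_y\psi$ and $\partial_\tau\varphi=\partial_s\psi-\dot x_0\cdot\nabla_y\psi$, and $u=u_0+\dot x_0$, the drift term contributes a spurious $\dot x_0\cdot\eta_0^2\,\nabla_y\psi$ that cancels exactly against the one from $\partial_\tau\varphi$ (the same Galilean cancellation), while the extension-gradient, boundary and $\overline{\Delta}_b$ terms are invariant because the shift is horizontal and $Y$-independent. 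As $x_0$ is only absolutely continuous in time, $\varphi$ is merely Lipschitz in $\tau$; since only the first time derivative of the test function appears, tested against $L^1$ quantities, I would regularize $x_0$ in time, apply the inequality, and pass to the limit.

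\textbf{Stage 4: excess comparison and main obstacle.} Using $\|x_0\|_\infty<\tfrac1{16}$, the image of $Q_{1/4}$ under $(y,s)\mapsto(z,\tau)$ lies in $Q_1(x,t)$ and is volume-preserving in space at each time. For $E^S$ and $E^V$ I would use that the mean minimizes the $L^p$-deviation up to a factor $2$: subtracting the $Q_1(x,t)$-mean of $\theta$ (respectively choosing the slice constant $[u(\tau)]_{B_1(x)}-\dot x_0$) and changing variables gives $E^S(\theta_0;\tfrac14)\le C\,E^S(\theta;x,t,1)$, and, since $[u_0(s)]_{B_{1/4}}=0$, $E^V(u_0;\tfrac14)\le C\,E^V(u;x,t,1)$, with $C=C(\alpha,p)$ absorbing $|Q_1|/|Q_{1/4}|$. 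The delicate term, which I expect to be \emph{the main obstacle}, is $E^{NL}(\theta_0;\tfrac14)$: after the change of variables the balls become $B_R(x+x_0(s))$ and the subtracted mean becomes $[\theta(\tau)]_{B_{1/4}(x+x_0(s))}$, so unlike $E^S,E^V$ the recentering and the change of mean are genuinely nonlocal. I would split the supremum into a bounded range $R\in[\tfrac1{16},\tfrac14)$, where the shifted balls are comparable to $B_{1/4}(x)$ and the contribution is dominated by $E^S(\theta;x,t,1)$, and a tail $R\ge\tfrac14$, where the decay factor $(r/R)^{\sigma}$ together with $B_R(x+x_0)\subset B_{2R}(x)$ lets me estimate against $E^{NL}(\theta;x,t,1)$; the mismatch between $[\theta(\tau)]_{B_{1/4}(x+x_0)}$ and $[\theta(\tau)]_{B_1(x)}$ is absorbed by telescoping over dyadic concentric balls, each difference being controlled by $E^S+E^{NL}$, crucially using both the smallness $\|x_0\|_\infty\le C\varepsilon_1$ and $\sigma>0$. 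Summing the three pieces yields $E(\theta_0,u_0;\tfrac14)\le C_1\,E(\theta,u;x,t,1)$.
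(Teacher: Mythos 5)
Your proposal follows the paper's route in all the places where the estimates happen: the H\"older bound $\lvert x_0(s)\rvert\le C\varepsilon_1$ keeping the drift inside a small ball, the Galilean cancellation showing $(\theta_0,u_0)$ solves \eqref{eq:SQG} with $u_0=\mathcal R^\perp\theta_0-\dot x_0$, the transplantation of test functions for the local energy inequalities, and the change of variables in $E^S$, $E^V$, $E^{NL}$ using $B_{1/4}+x_0(s)\subseteq B_1$ and $B_R(x_0(s))\subseteq B_{4R}$ (your dyadic telescoping for the mean mismatch is more than needed --- one comparison with $[\theta(\tau)]_{B_1(x)}$ suffices --- but it works).

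The genuine gap is that you never address \emph{uniqueness} of the solution to the ODE \eqref{eq:ODE}, which occupies roughly half of the paper's proof and is needed for the lemma to make sense: the statement \emph{defines} $\theta_0$ through ``the'' flow $x_0$, and the whole subsequent iteration (as well as the Gr\"onwall comparison of flows in the stability section) presupposes that this flow is canonically determined and depends controllably on the base point $(x,t)$. Carath\'eodory existence does not give this, and the vector field $\xi\mapsto\mean{B_{1/4}(\xi)}u(y+x,s+t)\,dy$ is not Lipschitz for a merely Leray--Hopf velocity. The paper's argument is that $u(\cdot,\tau)\in BMO$ uniformly in $\tau$ (Theorem \ref{thm:LerayConstWu} plus Calder\'on--Zygmund), and by John--Nirenberg exponential integrability the averaged field is log-Lipschitz in $\xi$, uniformly in time, so Osgood's criterion yields uniqueness; this step has no counterpart in your write-up. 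A second, minor, issue: the first conclusions of the lemma (global existence of $x_0$ on $[-1,0]$, suitability of $(\theta_0,u_0)$, zero average) are asserted \emph{without} the smallness hypothesis \eqref{eq:changeofvarhyp}, whereas your continuation argument uses \eqref{eq:changeofvarhyp} both to integrate $\dot x_0$ and to confine $x_0$ to $B_{1/2}$. This is easily repaired --- $u\in L^\infty_tL^2$ already gives $\sup_\xi\mean{B_{1/4}(\xi)}\lvert u(\cdot,\tau)\rvert\le C\norm{u(\tau)}_{L^2}$, so the ODE vector field is uniformly bounded and no bootstrap is needed for existence --- but as written your Stage 1 proves a conditional version of an unconditional claim.
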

\begin{remark}[Change of variables at scale $r$]\label{rem:changeofvar} Under the hypothesis of Lemma \ref{lem:changeofvar}, the rescaled pair ($\theta_r$, $u_r$) is still a suitable weak solution for $r \in (0,1)$ (see \eqref{eq:rescaling}) and we can apply the change of variables of Lemma \ref{lem:changeofvar} to it. More precisely, we define for $(x,t) \in Q_1$
\begin{equation}
\theta_{r,0}(y,s):=r^{2\alpha-1} \theta(r(y +  x_0(s)+ x), r^{2\alpha}(s+t))
\end{equation}
and 
\begin{equation}
u_{r,0}(y,s)= r^{2\alpha -1} u(r(y + x_0(s)+x), r^{2\alpha}(s + t))-\dot x_0(s)\,,
\end{equation}
where $\dot x_0(s)=r^{2\alpha-1} \mean{B_{1/4}} u(r(y+x_0(s)+x), r^{2\alpha}(s+ t))$ with $ x_0(0)=0\,.$ Observe that equivalently, by considering $\tilde{x}_0(s):=r^{1-2\alpha} x_0(s)$, we can write 
\begin{equation}\label{eq:rescaledchangeofvar}
\theta_{r,0}(y,s):=r^{2\alpha-1} \theta(r y + r^{2\alpha} \tilde x_0(s)+r x, r^{2\alpha}(s+t))
\end{equation}
and 
\begin{equation}\label{eq:rescaledchangeofvaru}
u_{r,0}(y,s)= r^{2\alpha -1} \left( u(r y + r^{2\alpha} \tilde x_0(s)+ rx, r^{2\alpha}(s + t))-\dot{ \tilde{x}}_0(s)\right) \,.
\end{equation}
\end{remark}

\begin{proof}
By Peano, the ODE \eqref{eq:ODE} admits a solution and we claim it is unique since the vectorfield generating the flow is log-Lipschitz and hence satisifies the Osgood uniqueness criterion \cite[Chapter II.7 and  III.12.7-8]{Walter1998}. Indeed, we know from Theorem \ref{thm:LerayConstWu} that $
\theta \in L^\infty(\R^2 \times [-2, 0])$ and
$
u \in L^\infty( [-2,0],  {BMO}(\R^2)) \,.$
 We estimate for fixed time $\tau$ as long as $\lvert \xi - \zeta \rvert \leq \frac{1}{4}$, using also the bound  $\lvert B_\frac{1}{4}(\xi) \Delta B_\frac{1}{4}(\zeta) \rvert \lesssim \lvert \xi - \zeta \rvert$ on the volume of the symmetric difference, 
\begin{align}
\bigg| \, \mean{B_\frac{1}{4}(\xi)} u(y+x, \tau) \, dy  &  - \mean{B_\frac{1}{4}( \zeta)} u(y+x,\tau) \, dy \, \bigg|  \\
&\lesssim \lvert \xi-\zeta \rvert \bigg| \, \mean{B_\frac{1}{4}(\xi)\setminus B_\frac{1}{4}( \zeta) } u(y+x,\tau) \, dy- \mean{B_\frac{1}{4}(\zeta) \setminus B_\frac{1}{4}(\xi)} u(y+x,\tau) \, dy \,  \bigg| \\
&\lesssim  \lvert \xi-\zeta \rvert  \bigg( \, \mean{B_\frac{1}{4}(\xi) \setminus B_\frac{1}{4}(\zeta)}  \big| u(y+x,\tau) - [u(\cdot+x, \tau)]_{B_{3/8}(\frac{1}{2}(\xi + \zeta))}  \big| \, dy \\
& \, \hspace{5em} +\mean{B_\frac{1}{4}(\zeta) \setminus B_\frac{1}{4}(\xi)}  \big| u(y+x, \tau) - [u(\cdot +x,\tau)]_{B_{3/8}(\frac{1}{2}(\xi + \zeta))}  \big| \, dy \bigg) \, .
\end{align}
Recall from the John-Nirenberg inequality \cite{JohnNirenberg1961} that {BMO} functions are exponentially integrable, that is for every $f \in{BMO}(\R^2)$ there exist constants $c_1, c_2 >0$ such that for any ball $B$ in $\R^2$ 
$$ \lvert\{ x \in B: \lvert f - [f]_B\rvert > \lambda \} \rvert \leq c_1 \exp(- c_2 \lambda \norm{f}_{{BMO}}^{-1}) \lvert B \rvert \, .$$
As an immediate consequence, we observe that for $A \geq C \norm{f}_{{BMO}}\,$ and any ball $B$ in $\R^2$
$$ \sup_{B } \mean{B} e^{\frac{\lvert f- [f]_B \rvert}{A}} \, dx < + \infty \,.$$
We now estimate the last two contributions, setting $z:= x + \frac 12 (\xi + \zeta)$ and using Jensen
\begin{align}
\mean{B_\frac{1}{4}(\xi) \setminus B_\frac{1}{4}(\zeta)}  &\big| u(y+x, \tau) - [u(\cdot+x,\tau)]_{B_{3/8}(\frac{1}{2}(\xi + \zeta))}  \big| \, dy \\
 &\lesssim \norm{u(\tau)}_{BMO} \, \log \bigg(\lvert B_\frac{1}{4}(\xi) \setminus B_\frac{1}{4}(\zeta) \rvert^{-1} \,\mean{B_\frac{3}{8}(z)}  e^{ \frac{1}{A} \lvert u(y, \tau) - [u(\tau)]_{B_{3/8}(z)} \rvert }\, dy  \bigg) \\
& \lesssim \norm{u(\tau)}_{ {BMO}} \log ( C \lvert \xi - \zeta\rvert^{-1} ) \, .
\end{align}
We infer that the function $\xi \mapsto \mean{B_{1/4}(\xi)} u(y, s+t)$ is log-Lipschitz in space for $s \in [-1, 0]$, uniformly in time, and there is a unique solution to \eqref{eq:ODE}. 
Observe also that the dependence with respect to the point $(x,t) \in Q_1$ is log-Lipschitz. The functions $\theta_0$ and $u_0$ as in the statement are now well-defined. We remark first that, in the sense of distributions, $\div u_0= 0$ and
\begin{align}
\partial_s \theta_0 + u_0 \cdot \nabla_y \theta_0 &= (\partial_t \theta + \dot{x}_0(s) \cdot \nabla \theta) \big \vert_{(y+x+ x_0(s),s+t)} + (u \cdot \nabla \theta- \dot{x}_0(s) \cdot \nabla \theta)\big \vert_{(y +x+ x_0(s), s+t)} \\
&= (-\Delta)^\alpha \theta(y+x+x_0(s), s+t)\\
 &= (-\Delta)^\alpha \theta_0 \,,
\end{align}
so that $(\theta_0, u_0)$ is a distributional solution of \eqref{eq:SQG}. It is straightforward to check that $(\theta_0, u_0)$ is in fact a suitable weak solution. Moreover, $u_0(y,s) =\mathcal{R}^\perp \theta_0 (y,s)- \dot{x_0}(s)$ and 
$$[u_0(s)]_{B_\frac{1}{4}}= \mean{B_\frac{1}{4}} u(y+ x+x_0(s), s+t) \, dy - \dot x_0(s)= 0$$
by construction. Assume now that \eqref{eq:changeofvarhyp} holds for an $\varepsilon_1 \in (0, \frac{1}{2})$ yet to be chosen small enough. As long as $x_0(s) \in B_{3/4}$ we estimate
\begin{align}
\lvert \dot x_0(s) \rvert \leq \bigg(\, \mean{B_\frac{1}{4}+ x_0(s)} \lvert u \rvert^p(x+y, s+t) \, dy \bigg)^\frac{1}{p} \leq 4^{\frac{2}{p}}\bigg(\, \mean{B_1} \lvert u \rvert^p(x+y, s+t) \, dy \bigg)^\frac{1}{p}\,,
\end{align}
so that for $s\in (-1,0]$
\begin{equation}\label{eq:changvarcenter}
\lvert x_0(s) \rvert \leq \norm{\dot{x_0}}_{L^p((-1,0))} \lvert s \rvert^{1-\frac{1}{p}} \leq 4^\frac{2}{p} \bigg(\,\mean{Q_1(x,t)} \lvert u \rvert^p\, dy \, d \tau \bigg)^\frac{1}{p}\lvert s \rvert^{1-\frac{1}{p}}  \, .
\end{equation}
Choosing  $\varepsilon_1\leq \frac{3}{16} 4^{-\frac{2}{p}}\,$ the assumption \eqref{eq:changeofvarhyp} guarantees that $x_0(s) \in B_{3/16}\subset B_{3/4}$ for $s\in [-1,0]\, .$ We then estimate, using again that $B_{1/4}+x_0(s) \subseteq B_1$ for $s\in [-1,0]$, 
\begin{align}
E^S(\theta_0; \frac{1}{4}) &\leq \bigg(\, \mean{Q_\frac{1}{4}} \lvert \theta(y+x+x_0(s), s+t)- (\theta)_{Q_1(x,t)} \rvert^p \, dy \, ds \bigg)^\frac{1}{p} + \lvert (\theta_0)_{Q_\frac{1}{4}} - (\theta)_{Q_1(x,t)} \rvert \\
&\leq 2 \bigg(\,\mean{-(1/4)^{2\alpha}}^0 \mean{B_\frac{1}{4}+ x_0(s)} \lvert \theta(x+y, s+t)- (\theta)_{Q_1(x,t)} \rvert^p \, dy \, ds \bigg)^\frac{1}{p} \\
&\leq 2 \,4^{(2+2\alpha)\frac{1}{p}} \bigg(\, \mean{Q_1(x,t)} \lvert \theta - (\theta)_{Q_1(x,t)} \rvert^p \, dy \, ds\bigg)^\frac{1}{p} \\
&\leq 8 \, E^S(\theta;x,t,1) \, .
\end{align}
Proceeding analogously, we have that  
$E^V(u_0; \frac{1}{4}) \leq 16\, E^V(u; x,t,1) \, .$
As for the non-local part of the excess, we observe that for $R \geq \frac{1}{16}$ we have $B_R(x_0(s)) \subseteq B_{R+3/16} \subseteq B_{4R}$ and hence
\begin{align}
&E^{NL}(\theta_0; \frac{1}{4}) \leq \bigg(\, \mean{-(1/4)^{2\alpha}}^0 \sup_{R \geq \frac{1}{16}} \bigg\{ \Big(\frac{1}{4R} \Big)^{\sigma p} \bigg(\, \mean{B_R} \lvert \theta_0 -[\theta]_{B_1(x)} \rvert^\frac{3}{2} \, dy \,\bigg)^\frac{2p}{3} + \lvert [\theta_0]_{B_\frac{1}{4}} - [\theta]_{B_1(x)} \rvert^p \bigg\} \, ds \bigg)^\frac{1}{p} \\
&\leq 8 \bigg(\, \mean{-(1/4)^{2\alpha}}^0 \sup_{R \geq \frac{1}{16}} \Big(\frac{1}{4R} \Big)^{\sigma p} \bigg(\, \mean{B_{4R}} \lvert \theta(y+x, s+t) -[\theta]_{B_1(x)} \rvert^\frac{3}{2} \, dy\bigg)^\frac{2p}{3} \, ds \bigg)^\frac{1}{p} +4^{\sigma +1} E^S(\theta;x,t, 1) \\
&\leq 16 \left( E^{NL}(\theta;x,t,1) + E^S(\theta; x,t, 1) \right) \, .
\end{align}
\end{proof}

\begin{theorem}\label{prop:ereg1} Let $\alpha \in (\frac{1}{4}, \frac{1}{2})$, $\sigma \in (0, 2\alpha)$, {\new $p > \max \left\{\frac{1+\alpha}{\alpha}, \frac{2\alpha}{\sigma}\right\}$} and $\gamma \in [1-2\alpha, \sigma - \frac{2\alpha}{p})$. There exists $\varepsilon_2 \in \left(0, \frac{1}{2}\right)$ (depending only on $\alpha, \sigma, p$ and $ \gamma$) such that the following holds: Let $(\theta, u)$ be a suitable weak solution to \eqref{eq:SQG}--\eqref{eq:u} on $\R^2 \times (-4^{2\alpha},0]$.
Assume that for any $(x,t) \in Q_1$ it holds that
\begin{equation}\label{eq:ereg1ass}
E(\theta_0, u_0; \frac{1}{4}) \leq \varepsilon_2 \,,
\end{equation}
where $(\theta_0, u_0)$ is obtained from $\theta$ through the change of variables of Lemma \ref{lem:changeofvar}
. Then
\begin{equation}\label{eqn:ctheta}
\theta \in C^{\delta, \frac{p-1}{p} \delta} (Q_1)  \qquad \mbox{where} \qquad {\new \delta:= \gamma - \frac{1}{p-1}\left[1-2\alpha + \frac{2\alpha}{p} \right]\, .}
\end{equation}
\end{theorem}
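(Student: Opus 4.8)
The plan is to upgrade the single small-excess hypothesis \eqref{eq:ereg1ass} into a decay of the excess on \emph{every} scale by iterating the excess-decay Proposition~\ref{prop:excessdecay}, and then to read off the H\"older continuity by a Campanato-type argument. The one structural obstruction to a naive iteration is that Proposition~\ref{prop:excessdecay} requires the velocity to have zero spatial average on the ball from which it decays, a property destroyed both by the decay itself and by the rescaling \eqref{eq:rescaling}; the remedy is to reinstate it at each scale through the change of variables of Lemma~\ref{lem:changeofvar}, in its rescaled form (Remark~\ref{rem:changeofvar}). This is the source of the ``tilting'' of the reference frame alluded to in the introduction.

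Fixing $(x,t)\in Q_1$, I would build inductively a sequence $(\theta_n,u_n)$ of suitable weak solutions, where $(\theta_{n+1},u_{n+1})$ is obtained from $(\theta_n,u_n)$ by rescaling with the fixed factor $\mu_0$ and then applying Lemma~\ref{lem:changeofvar}, so that $[u_n(s)]_{B_{1/4}}=0$ at every step. Writing $\mathcal E_n:=E(\theta_n,u_n;\tfrac14)$ and combining the decay $E(\theta_n,u_n;\tfrac{\mu_0}{4})\le c\mu_0^{\gamma}\mathcal E_n$ of Proposition~\ref{prop:excessdecay}, the rescaling identity of Remark~\ref{rem:rescalingexcess} (which contributes a factor $\mu_0^{2\alpha-1}$), and the bound $E(\theta_{n+1},u_{n+1};\tfrac14)\le C_1\,E(\,\cdot\,;1)$ of Lemma~\ref{lem:changeofvar}, one is led to a recursion of the form $\mathcal E_{n+1}\le C_1\,c\,4^{1-2\alpha}\,\mu_0^{\gamma+2\alpha-1}\,\mathcal E_n$. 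Here the hypothesis $\gamma\ge 1-2\alpha$ is exactly what renders the exponent $\gamma+2\alpha-1$ nonnegative, so that $\mu_0^{\gamma+2\alpha-1}\le1$; since the constant $c$ in Proposition~\ref{prop:excessdecay} is free (and $\mu_0=\mu_0(c)$ is only fixed afterwards), we may take $c\le(2C_1 4^{1-2\alpha})^{-1}$ and obtain a genuine contraction, $\mathcal E_n\le\lambda^n\varepsilon_2$ with $\lambda<1$. To close the induction one checks that $\mathcal E_n$ stays below the threshold $(\tfrac14)^{1-2\alpha}\varepsilon_0$ of Proposition~\ref{prop:excessdecay} and that the rescaled velocity stays below the threshold $\varepsilon_1$ of Lemma~\ref{lem:changeofvar}: both hold because $\mathcal E_n$ is decreasing and because, after the average has been normalized to zero, the relevant $L^p$-norm of the velocity is controlled --- with constants depending only on the now-fixed $\mu_0$ --- by the oscillation, hence by $\mathcal E_n\le\varepsilon_2$, and $\varepsilon_2$ is at our disposal. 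This last point is precisely where the supercriticality of the velocity in $L^p$ is circumvented.

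Unwinding the construction, $\theta_n$ is $\theta$ rescaled by $r_n\sim(\mu_0/4)^n$ and shifted along an accumulated trajectory $s\mapsto X_n(s)$; by \eqref{eq:changvarcenter} each increment obeys $|x_0^{(n)}(s)|\lesssim\mathcal E_n\,|s|^{(p-1)/p}$, so the total displacement converges and the limiting curve $X(\,\cdot\,)$ is H\"older-$\tfrac{p-1}{p}$ in time. Re-expressing the decay of $\mathcal E_n$ in the original variables yields a Morrey--Campanato-type bound: the $L^p$ mean oscillation of $\theta$ over the \emph{tilted} parabolic tube of spatial radius $r_n$ centred on $X$ decays as a fixed positive power of $r_n$. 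I would then invoke Campanato's characterization of H\"older continuity relative to this tilted family of tubes: the decay of the oscillation gives spatial regularity with exponent $\delta$, while the tilt relation --- a spatial displacement of order $\tau^{(p-1)/p}$ over a time-lapse $\tau$ --- converts the spatial exponent into the temporal exponent $\tfrac{p-1}{p}\delta$, producing $\theta\in C^{\delta,\frac{p-1}{p}\delta}(Q_1)$ with $\delta$ as in \eqref{eqn:ctheta}. Since \eqref{eq:ereg1ass} is assumed at every $(x,t)\in Q_1$, the construction is uniform and Campanato's theorem applies globally on $Q_1$.

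The step I expect to be the main obstacle is precisely the passage from the tilted-tube oscillation control to a genuine (Eulerian) H\"older modulus with the sharp exponents. Because $2\alpha<1$, over a time interval $r^{2\alpha}$ the tilt is of order $r^{2\alpha(p-1)/p}\gg r$, i.e.\ it dwarfs the spatial radius, so the information one controls lives intrinsically on long, thin, tilted tubes rather than on standard parabolic cylinders. Extracting from this the correct spatial and temporal H\"older exponents forces one to match three competing scales --- the spatial radius $r$, the parabolic time $r^{2\alpha}$, and the tilt time-scale implicit in $\tau^{(p-1)/p}$ --- and it is exactly this matching, together with the rescaling factor $r^{1-2\alpha}$ carried by the excess, that produces the additive correction $-\tfrac{1}{p-1}\big(1-2\alpha+\tfrac{2\alpha}{p}\big)$ relating $\delta$ to $\gamma$. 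Keeping the $C_1$-losses from infinitely many changes of variables from degrading this exponent (handled above by the smallness of $c$) is the other delicate bookkeeping point.
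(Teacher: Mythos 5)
Your proposal follows essentially the same route as the paper: iterate Proposition~\ref{prop:excessdecay} together with the rescaled change of variables of Lemma~\ref{lem:changeofvar} at each scale $\mu\sim\mu_0/4$ (with $c$ chosen small relative to $C_1$ so the recursion closes, and $\gamma\ge 1-2\alpha$ absorbing the supercritical rescaling factor), bound the accumulated translation by $|r_k(s)|\lesssim \varepsilon_2|s|^{1-1/p}$, and conclude via a Campanato argument, arriving at the correct exponents $\delta$ and $\tfrac{p-1}{p}\delta$. The obstacle you flag at the end is resolved in the paper exactly as your scale-matching suggests: one abandons the parabolic cylinders and measures the oscillation on $B_{r}(x)\times(t-r^{p/(p-1)},t]$, over which the tilt is of order $r$ and can be absorbed by enlarging the ball, the price being the Jensen factor $|I|^{-1/p}$ that produces the correction $\tfrac{1}{p-1}\big[1-2\alpha+\tfrac{2\alpha}{p}\big]$; Campanato is then applied with respect to the metric $\max\{|x-y|,|t-s|^{(p-1)/p}\}$.
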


\begin{remark}[Role of the parameters] \label{rem:parameter} The parameter $p$ is crucial since it determines the dimension of the singular set (see proof of Theorem \ref{thm:main}): the lower the power $p,$ the better the dimension estimate. All the other parameters are of technical nature; yet, the range of admissible parameters is sufficiently large to allow us to conclude the desired estimate on the size of the singular set for all fractional orders for which the latter is meaningful (i.e. for $\alpha> \alpha_0$, see Remark \ref{rem:alpha0}). We deliberately choose to leave all the parameters free to increase the readability of the paper; but one could also read the paper fixing the parameters as in the proof of Theorem \ref{prop:ereg2}. Let us now comment on the role of the single parameters in more detail:
\begin{itemize}
\item $p$ cannot go below the threshold $\frac{1+\alpha}{\alpha}\,:$ This corresponds to spacetime integrability that guarantees the compactness of the $(p-1)$-energy inequality (see Lemma \ref{lem:compactness}) which in turn is \emph{the} crucial ingredient of the excess decay. The requirement $p> \frac{2\alpha}{\sigma}$ on the other hand is purely technical and harmless for $\sigma$ close to $2\alpha\,.$
\item $\sigma$ captures the decay at infinity of the non-local part of both the fractional Laplacian and the velocity and should be thought arbitrarily close to $2\alpha \, .$
\item $\gamma$ describes the decay of the excess when passing to the smaller scale $\mu_0.$ In order to apply the excess decay of Proposition \ref{prop:excessdecay} iteratively, we have to verify its smallness requirement along a sequence of by $\mu:=\frac{\mu_0}{4}$ rescaled solutions which is possible only if the decay rate beats the supercritical scaling of the excess (see Remark \ref{rem:rescalingexcess}), i.e. $\gamma \geq 1-2\alpha \, .$
\item The exponent of the local H\"older continuity in space, $\delta\,,$ is obtained from $\gamma\,,$ but  considerably worsened. This stems form the fact that in order to use the decay of the excess on all scales to deduce H\"older continuity via Campanato's Theorem, we have to control the effect of the flow of Lemma \ref{lem:changeofvar}. This loss in the H\"older continuity exponent is peculiar to SQG and is not observed in the similar results for Navier-Stokes. 
\end{itemize}    
\end{remark}

\begin{proof} Let $\varepsilon_1>0$, $C_1\geq1$ be the universal constants from Lemma \ref{lem:changeofvar}. 

The proof relies on an iterative construction. We fix $(x,t) \in Q_1$. {\new We obtain the suitable weak solution $(\theta_0, u_0)$ by applying Lemma \ref{lem:changeofvar} to $(\theta, u)$ at the point $(x,t) \, .$ This first change of variables does two things: It translates $(x, t)$ to the origin $(0, 0)$ and it produces a new suitable weak solution whose velocity $u_0$ has zero average on $B_{1/4}\, .$ Hereafter, the excess will always be centered in $(0,0).$ 

Let $\mu:=\frac{\mu_0}{4} \in (0, \frac{1}{8})$ where $\mu_0, \,\varepsilon_0 \in (0, \frac{1}{2})$ are given by Proposition \ref{prop:excessdecay} with {$c=(16C_1)^{-1}$}. For $k\geq 1$ we iteratively define a new suitable weak solution $(\theta_k, u_k)$ which we obtain from $(\theta_{k-1}, u_{k-1})$ by first rescaling it at scale $\mu$ according to \eqref{eq:rescaling}, i.e. we set
\begin{equation}\label{eq:rescalingthetak-1}
\theta_{k-1, \mu}(y,s):= \mu^{2\alpha-1} \theta_{k-1}(\mu y, \mu^{2\alpha}s) \quad u_{k-1, \mu}(y,s):= \mu^{2\alpha-1} u_{k-1}(\mu y, \mu^{2\alpha}s)\,,
\end{equation}
and second, by applying the change of variables of Lemma \ref{lem:changeofvar} to $(\theta_{k-1, \mu}, u_{k-1, \mu})$ at the point $(0,0) \,.$ This change of variables produces a new suitable weak solution, which we call $(\theta_k, u_k)\,,$ that evolves along the flow $\tilde x_k$ and whose velocity $u_k$ has zero average on $B_{1/4}\,.$ 
Indeed, setting $x_k(s):= \mu^{2\alpha-1} \tilde x_k(s)\,$ (compare also with Remark \ref{rem:changeofvar}),} we define iteratively for $k \geq 1$ 
\begin{equation}\label{eq:theta_k}
\theta_k(y,s):= \theta_{k-1, \mu}(y + \tilde x_k(s),s) =\mu^{2\alpha-1} \theta_{k-1}(\mu y + \mu^{2\alpha} x_k(s), \mu^{2\alpha}s)\,
\end{equation}
and 
\begin{equation}\label{eq:u_k}
u_{k}(y,s):= u_{k-1, \mu}(y + \tilde x_k(s),s)- \dot{\tilde x}_k(s)=\mu^{2\alpha-1}( u_{k-1}(\mu y {\new +\mu^{2\alpha} x_k(s)}, \mu^{2\alpha}s ) - \dot x_k(s))\, ,
\end{equation}
where 
\begin{equation}
\begin{cases}\dot x_k(s) &= \displaystyle \mean{\mu^{2\alpha-1}x_k(s) +B_\frac{1}{4}} u_{k-1}(\mu y,\mu^{2\alpha}s) \,dy \\
x_k(0)&=0 \,.
\end{cases}
\end{equation}
Observe that by Lemma \ref{lem:changeofvar} and scaling invariance, $(\theta_k, u_k)$ are suitable weak solutions of \eqref{eq:SQG} for all $k\geq 0$ and $[u_k(s)]_{B_{1/4}}=0$ for all $s \in [-1,0]\, .$ 

Next, we want to deduce the H\"older continuity of $\theta$ assuming \eqref{eq:ereg1ass} is enforced. 
To this end, we break the parabolic scaling and we consider in Step 3 a new excess of $\theta_0$ made on modified cylinders. This in turn is helpful to get sharper estimates at the level of the change of variable, performed in Step 2, since the translation has less time to act. Finally, we rewrite this decay in Step 4 in terms of $\theta$ rather than $\theta_0$, and we apply Campanato's Theorem to deduce the H\"older continuity of $\theta$ in Step 5.

\textit{Step 1: excess decay on the sequence of solutions after the change of variable.
Let $\alpha$, $\sigma$, $p $ and $\gamma$ as in the statement. There exists a universal constant $\bar \varepsilon_2 \in (0, \frac{1}{2})$ (depending only on $\alpha$, $\sigma$, $\gamma$ and $p$) such that if $\varepsilon_2 \in (0, \bar \varepsilon_2]$ and if $(\theta,u)$ is a suitable weak solution to \eqref{eq:SQG} on $\R^2 \times (-2^{2\alpha},0]$ with \eqref{eq:ereg1ass},
then for every $k\geq 0$ the excess of $(\theta_k, u_k)$ (see \eqref{eq:theta_k}-- \eqref{eq:u_k}), decays at scale $\mu$:
\begin{align} \label{eq:it1}
E(\theta_k, u_k; \mu ) &\leq C_1^{-1}  { \mu^{\gamma(k+1)} \mu^{(2\alpha-1)k} \varepsilon_2 }\\
E(\theta_k, u_k ; \frac{1}{4} ) &\leq  \mu^{(\gamma-(1-2\alpha))k} \varepsilon_2 \label{eq:it2} \, ,
\end{align}
where $C_1$ is the universal constant from Lemma \ref{lem:changeofvar}.
}\\
We proceed by induction on $k \geq 0$. 

\textit{The case $k=0$.} Let $\varepsilon_2 \in (0,\bar \varepsilon_2]$ for some $\bar \varepsilon_2 \in (0, \frac{1}{2})$ to be chosen later and assume that \eqref{eq:ereg1ass} holds. We only need to show \eqref{eq:it1}. If 
$$ \bar \varepsilon_2 \leq 4^{2\alpha-1} \varepsilon_0\,,$$
then by Proposition \ref{prop:excessdecay} and \eqref{eq:ereg1ass}
\begin{equation}
E(\theta_0, u_0; \mu)=E(\theta_0, u_0; \frac{\mu_0}{4}) \leq C_1^{-1}  \left(\frac{\mu_0}{4}\right)^\gamma E(\theta_0, u_0; \frac{1}{4}) \leq C_1^{-1}  \mu^\gamma  \varepsilon_2 \,.
\end{equation}
\textit{The inductive step.} By the inductive hypothesis, we can assume that 
\begin{itemize}
\item $E(\theta_{k-1}, u_{k-1}; \mu) \leq C_1^{-1}  \mu^{k\gamma} \mu^{(k-1)(2\alpha-1)} \varepsilon_2 \,, $
\item $E(\theta_{k-1}, u_{k-1}; \frac{1}{4}) \leq  \mu^{(k-1)(\gamma-(1-2\alpha))} \varepsilon_2 \, .$
\end{itemize}
We recall that $(\theta_k, u_k)$ is obtained by applying the change of variables of Lemma \ref{lem:changeofvar} to $(\theta_{k-1, \mu}, u_{k-1, \mu})$ at the point $(0,0)$. Using the inductive assumption and that $[u_{k-1}(s)]_{B_{1/4}}=0$ for $s \in [-1,0]$, we can verify the smallness  assumption of Lemma \ref{lem:changeofvar}. Indeed,
\begin{align}
\bigg(\, \mean{Q_1}\lvert u_{{k-1}, \mu} \rvert^p \, dy \, ds\bigg)^\frac{1}{p} &= \mu^{2\alpha-1} \bigg(\, \mean{Q_\mu} \lvert u_{k-1} \rvert^p \, dy \, ds \bigg)^\frac{1}{p} \\
&\leq \mu^{2\alpha-1} \left(4\mu \right)^{-(2+2\alpha)\frac{1}{p}} \bigg(\, \mean{Q_\frac{1}{4}} \lvert u_{k-1} \rvert^p \, dy \, ds \bigg)^\frac{1}{p} \\
&=\mu^{2\alpha-1} \left(4\mu \right)^{-(2+2\alpha)\frac{1}{p}}  \bigg(\, \mean{Q_\frac{1}{4}} \lvert u_{k-1}(y,s) -[u_{k-1}(s)]_{B_\frac{1}{4}}\rvert^p \, dy \, ds \bigg)^\frac{1}{p} \\
&= \mu^{2\alpha-1} \left(4\mu \right)^{-(2+2\alpha)\frac{1}{p}}   E^V(u_{k-1}; \frac{1}{4})\\
 &\leq  \mu^{2\alpha-1} \left(4\mu \right)^{-(2+2\alpha)\frac{1}{p}}   \varepsilon_2 \, .
\end{align}
Choosing $\bar \varepsilon_2$ even smaller, namely, 
\begin{equation}
\bar \varepsilon_2 :=\min \big\{ ( 4^{2\alpha-1} \varepsilon_0 ,  \mu^{1-2\alpha} \left(4 \mu\right)^{(2+2\alpha)\frac{1}{p}}\varepsilon_1 \big\} 
\end{equation}
we have 
$$\bigg(\,\mean{Q_1}\lvert u_{{k-1}, \mu} \rvert^p \, dy \, ds \bigg)^\frac{1}{p} \leq \varepsilon_1 \, .$$
By Lemma \ref{lem:changeofvar}, Remark \ref{rem:rescalingexcess} and the inductive hypothesis, we deduce that
\begin{equation}
E(\theta_k, u_k;\frac{1}{4}) \leq C_1 E(\theta_{k-1, \mu}, u_{k-1, \mu}; 1) = C_1 \mu^{2\alpha-1} E(\theta_{k-1}, u_{k-1}; \mu) \leq \mu^{k(\gamma-(1-2\alpha))} \varepsilon_2 \,,
\end{equation}
showing the second inequality and, recalling the choice of $\varepsilon_2 \in (0, \bar \varepsilon_2)$, showing in particular that 
$$ E(\theta_k, u_k;\frac{1}{4}) \leq  4^{2\alpha-1} \varepsilon_0 \, .$$
Since by construction $[u_k(s)]_{B_{1/4}}=0$ for $s \in [-1,0]$, we infer from Proposition \ref{prop:excessdecay} and the inductive assumption that
\begin{equation}
E(\theta_k, u_k; \mu) \leq C_1^{-1} \mu^\gamma E(\theta_k, u_k; \frac{1}{4}) \leq C_1^{-1}   \mu^\gamma \mu^{k(\gamma-(1-2\alpha))} \varepsilon_2 = C_1^{-1}  \mu^{(k+1)\gamma} \mu^{k(2\alpha-1)} \varepsilon_2 \, .
\end{equation}

\textit{Step 2: bound on the translation in the change of variables.  We observe that $\theta_k$ is just a shifted and rescaled (by $\mu^k$, according to the natural scaling \eqref{eq:rescaling}) version of $\theta_0$. Indeed, notice that by construction, one can verify inductively for $k\geq 1$
\begin{equation}\label{eqn:theta-k-other}
\theta_k(y,s)= \theta_{0, \mu^k}(y+ \mu^{-k} r_k(s), s) \, ,
\end{equation}
where {\new $\theta_{0, \mu^k}(y,s):= \mu^{k(2\alpha-1)} \theta_0(\mu^k y, \mu^{2\alpha k} s)$ and }
\begin{equation}
r_k(s) := \mu^{2\alpha-1}\sum_{j=1}^k \mu^j x_j(\mu^{2\alpha(k-j)} s) \, .
\end{equation}
We claim that the center of the cylinders don't move too much, namely for $s\in [-1,0]$
\begin{equation}
\lvert r_k(s) \rvert \leq C  \varepsilon_2\lvert s\rvert^{1-\frac{1}{p}}\mu^{2\alpha k(1-\frac 1 p)-(1-2\alpha+ \frac 2 p)} \,. 
\end{equation}
}
Indeed, for $j\geq 1$ we estimate as long as $x_j(s) \in B_{1/4}$
\begin{align}
\lvert \dot x_j(s) \rvert &\leq \bigg(\,\mean{\mu^{2\alpha} x_j(s) + B_\frac{\mu}{4}} \lvert u_{j-1}(y, \mu^{2\alpha}s )\rvert^p \, dy \bigg)^\frac{1}{p}\\
&\leq \mu^{-\frac{2}{p}} \bigg(\, \mean{B_\frac{1}{4}} \lvert u_{j-1}(y, \mu^{2\alpha}s)- [u_{j-1}(\mu^{2\alpha}s)]_{B_\frac{1}{4}} \rvert^p \, dy \bigg)^\frac{1}{p} \,,
\end{align}
where we used that $[u_{j-1}(\mu^{2\alpha}s)]_{B_{1/4}} =0$ uniformly in time. In particular, 
\begin{equation}
\norm{\dot x_j}_{L^p((-1,0))} \leq \mu^{-\frac{2}{p}} 4^{-\frac{2\alpha}{p}} E^V(u_{j-1};\frac{1}{4}) \leq \mu^{-\frac{2}{p}} E(\theta_{j-1},u_{j-1};\frac{1}{4})
\end{equation}
and hence for $s\in [-1,0]$ we have, using \eqref{eq:it2}, $$ \lvert x_j(s) \rvert \leq  \mu^{-\frac{2}{p}} E(\theta_{j-1},u_{j-1};\frac{1}{4}) \lvert s \rvert^{1-\frac{1}{p}}  \leq  \varepsilon_2 \lvert s \rvert^{1-\frac{1}{p}} \mu^{-\frac{2}{p}} \, .$$
Collecting terms, we have 
\begin{align}
\lvert r_k(s) \rvert &\leq  \varepsilon_2\lvert s\rvert^{1-\frac{1}{p}} \mu^{2\alpha k(1-\frac 1 p)}\mu^{2\alpha-1} \mu^{-\frac{2}{p}} \sum_{j=1}^k \mu^{(1-2\alpha(1-\frac 1 p ))j} \leq C   \varepsilon_2\lvert s\rvert^{1-\frac{1}{p}}\mu^{2\alpha k(1-\frac 1 p)-(1-2\alpha+ \frac 2 p)}   \, .
\end{align}

\textit{Step 3: Decay of a modified excess of $\theta_0$.  We claim that for every $r\in (0, \mu^2)$
\begin{equation}\label{eq:Campanato}
 \bigg(\, \mean{- r^{\frac{p}{p-1}}}^0 \mean{B_{\frac{r}{4}}} \lvert \theta_0 - (\theta_0)_{B_{\frac{r}{4}}\times (- r^\frac{p}{p-1},0]} \rvert^p \, dy \, ds\bigg)^\frac{1}{p} \leq  8C_1^{-1}   \, \mu^{-2} r^{\gamma-\left[\frac{1-2\alpha}{p-1} +\frac{2\alpha}{p(p-1)}\right]}  \varepsilon_2 \, .
\end{equation}}\\
Observe that by the scaling of the excess $\mu^{2\alpha-1} E^S(\theta_k;\mu)= E^S(\theta_{k, \mu};1)$ and by \eqref{eqn:theta-k-other} 
\begin{align}
\theta_{k, \mu}(y,s)= \theta_{0, \mu^{k+1}}(y+ \mu^{-(k+1)} r_k(\mu^{2\alpha}s), s) \,.
\end{align}
We introduce the set  $$I_{k+1}:=\Big(-\mu^{(k+1)((1-2\alpha)\frac{p}{p-1} + \frac{2\alpha}{p-1})}, 0\Big] \,.$$
If $ s \in I_{k+1}$ we can ensure, by an appropriate choice of $\varepsilon_2$, that $r_k(\mu^{2\alpha} s) \in B_{3\mu^{k+1}/4}\,.$ Indeed
\begin{equation}
\lvert r_k(\mu^{2\alpha} s) \rvert  \leq C  \varepsilon_2 \mu^{2\alpha (k+1)(1-\frac 1 p)-(1-2\alpha+ \frac 2 p)}  \mu^{(k+1)(1-2\alpha)} \mu^{(k+1) \frac{2\alpha}{p}}=  C  \varepsilon_2 \mu^{k+1}  \mu^{-(1-2\alpha+ \frac 2 p)} \,,
\end{equation}
for $ s \in I_{k+1}$ by Step 2. It is thus enough to choose $\varepsilon_2$ (if necessary) even smaller, or more precisely, we set $$\varepsilon_2 := \min \left\{ \bar \varepsilon_2, \frac{3}{4}C^{-1}\mu^{(1-2\alpha+ \frac 2 p)} \right\} \, .$$
We now estimate, by adding and substracting $ (\theta_{k,\mu})_{Q_1}$ and H\"older
\begin{align}
\bigg(\, \mean{I_{k+1}} \mean{B_{\frac{1}{4}} } \lvert \theta_{0,\mu^{k+1}}(y,s) - (\theta_{0,\mu^{k+1}})_{B_\frac{1}{4}\times I_{k+1} } \rvert^p \bigg)^\frac{1}{p} 
\leq 2 \bigg(\, \mean{I_{k+1}} \mean{B_{\frac{1}{4}}} \lvert \theta_{0,\mu^{k+1}}(y,s) - (\theta_{k,\mu})_{Q_1} \rvert^p \, dy \, ds \bigg)^\frac{1}{p} \, .
\end{align}
Since $\mu^{-(k+1)}r_k(\mu^{2\alpha}s) \in B_{3/4}$ we have $B_{1/4} \subseteq \mu^{-(k+1)} r_k(\mu^{2\alpha}s)+B_1$ as long as $s\in I_{k+1}$, so that 
\begin{align}
 \bigg(\,\mean{I_{k+1}} \mean{B_\frac{1}{4}} \lvert \theta_{0,\mu^{k+1}}(y,s) &- (\theta_{k, \mu} )_{Q_1}  \rvert^p \bigg)^\frac{1}{p} 
\\& \leq 4^\frac{2}{p} \bigg(\,\mean{I_{k+1}} \mean{\mu^{-(k+1)} r_k(\mu^{2\alpha}s)+B_1} \lvert \theta_{0,\mu^{k+1}}(y,s) - (\theta_{k, \mu})_{Q_1}  \rvert^p \, dy \, ds \bigg)^\frac{1}{p} \\
 &= 4^{\frac{2}{p}} \bigg(\,\mean{I_{k+1}} \mean{B_1} \lvert \theta_{0,\mu^{k+1}}(y + \mu^{-(k+1)} r_k(\mu^{2\alpha}s),s) - (\theta_{k, \mu})_{Q_1}  \rvert^p \, dy \, ds \bigg)^\frac{1}{p} \
 \\
 &\leq 4^{\frac{2}{p}} \lvert I_{k+1} \rvert^{-\frac{1}{p}} \bigg(\, \mean{Q_1} \lvert \theta_{k, \mu} (y, s) - (\theta_{k, \mu})_{Q_1} \rvert^p \, dy \, ds \bigg)^\frac{1}{p} \\
 &= 4^{\frac{2}{p}} \lvert I_{k+1} \rvert^{-\frac{1}{p}}  E^S(\theta_{k, \mu};1) =  4^{\frac{2}{p}} \lvert I_{k+1} \rvert^{-\frac{1}{p}} \mu^{2\alpha-1} E^S(\theta_k;\mu)
 \, .
\end{align}
Combining the previous inequality with Step 1 and observing that $\mu^{(k+1)2\alpha} I_{k+1}= \big(- \mu^{(k+1)\frac{p}{p-1}},0\big]$, we deduce that for $k \geq 1$
\begin{align}
&\hspace{-5em}\bigg(\, \mean{\mu^{(k+1)2\alpha} I_{k+1}} \mean{B_{\frac 1 4 \mu^{k+1}}} \lvert \theta_0(y,s) - (\theta_0)_{B_{\frac{1}{4}\mu^{k+1}} \times \mu^{(k+1)2\alpha} I_{k+1} } \rvert^p \ dy \, ds\bigg)^\frac{1}{p}\\
&= \mu^{(k+1)(1-2\alpha)}\bigg(\, \mean{I_{k+1}}  \mean{B_{\frac{1}{4}} } \lvert \theta_{0,\mu^{k+1}}(y,s) - (\theta_{0,\mu^{k+1}})_{B_\frac{1}{4}\times I_{k+1}} \rvert^p \bigg)^\frac{1}{p}   \\
&\leq 4 C_1^{-1}   \lvert I_{k+1} \rvert^{-\frac{1}{p}} \mu^{(k+1)\gamma} \varepsilon_2 =  4 C_1^{-1}  \mu^{(k+1)\left(\gamma-\left[\frac{1-2\alpha}{p-1} +\frac{2\alpha}{p(p-1)}\right]\right)}  \varepsilon_2\, .
\end{align}
This gives \eqref{eq:Campanato} for $r= \mu^{k+1}$ for some $k\geq 1\,.$ For $r\in (\mu^{k+2}, \mu^{k+1})$ instead, we observe that 
\begin{align}
 \bigg(\, \mean{- r^{\frac{p}{p-1}}}^0 \mean{B_{\frac{r}{4}}} & \lvert \theta_0  -  (\theta_0)_{B_{\frac{r}{4}}\times (- r^\frac{p}{p-1},0]} \rvert^p \, dy \, ds\bigg)^\frac{1}{p} \\
& \leq  2 \left( \frac{\mu^{k+1}}{r}\right)^{\frac 1p(2+ \frac{p}{p-1})}\bigg(\, \mean{- \mu^{(k+1)\frac{p}{p-1}}}^0 \mean{B_{\frac{1}{4} \mu^{k+1}}} \lvert \theta_0 - (\theta_0)_{B_{\frac{1}{4} \mu^{k+1}}\times (- \mu^{(k+1)\frac{p}{p-1}},0]} \rvert^p \, dy \, ds\bigg)^\frac{1}{p}  \\
&\leq 8C_1^{-1}   \, \left( \frac{\mu^{k+1}}{r}\right)^{\frac 1p(2+ \frac{p}{p-1})} \mu^{(k+1)\left(\gamma-\left[\frac{1-2\alpha}{p-1} +\frac{2\alpha}{p(p-1)}\right]\right)}  \varepsilon_2\\
&\leq 8C_1^{-1}   \, \mu^{-2} r^{\left(\gamma-\left[\frac{1-2\alpha}{p-1} +\frac{2\alpha}{p(p-1)}\right]\right)}\varepsilon_2 \,.
\end{align}

\textit{Step 4: Decay of a modified excess of $\theta$. There exists a $r_0=r_0(\norm{u}_{L^{p+1}(Q_{3/2})})>0$ such that for every $r\in (0, r_0)$ and for every $(x,t) \in Q_1$
\begin{equation}
 \bigg(\, \mean{t- r^{\frac{p}{p-1}}}^t \mean{B_{\frac{r}{8}}(x)} \lvert \theta - (\theta)_{B_{\frac{r}{8}}(x)\times (t- r^\frac{p}{p-1},t]} \rvert^p \, dy \, ds\bigg)^\frac{1}{p} \leq  32 \, \mu^{-2} r^{\gamma-\left[\frac{1-2\alpha}{p-1} +\frac{2\alpha}{p(p-1)}\right]} \varepsilon_2 \,.
\end{equation}
}\\
Since by Theorem \ref{thm:LerayConstWu} $u\in L^\infty([-(3/2)^{2\alpha}, 0],  {BMO}(\R^2))\,,$ we have $u \in L^q_{loc}(\R^2 \times [-(3/2)^{2\alpha}, 0])$ for any $q \in [1, \infty)\,.$ Fix $(x,t) \in Q_1$. As long as $x_0(s) \in B_{1/4}$ and $ \lvert s \rvert <{\new \frac{1}{5}}\,,$ we have the estimate
\begin{equation}\label{eq:flowest}
\lvert x_0(s) \rvert \leq \lvert s \rvert^{1-\frac{1}{p+1}} \norm{u}_{L^{p+1}(Q_{3/2})} \,.
\end{equation}
In particular for $0 \leq \lvert s \rvert^\frac{p}{p+1}  \leq \min \left\{\frac{1}{4} \norm{u}_{L^{p+1}(Q_{3/2})}^{-1},{\new 5^{-\frac{p}{p+1}}}\right\}$ the estimate \eqref{eq:flowest} holds. Let now  $$r_0:= \min \bigg\{\mu^2, \bigg( \frac 14 \norm{u}_{L^{p+1}(Q_{3/2})}^{-1} \bigg)^{1- \frac{1}{p^2}}, \bigg( \frac{1}{8} \norm{u}_{L^{p+1}(Q_{3/2})}^{-1} \bigg)^{p^2-1} \bigg \}\,.$$ 
{\new Recalling that $\mu^2 \leq \frac{1}{64}\,,$} we observe that for all $r \in (0, r_0)$, $(x,t) \in Q_1$ and $s\in (-r^\frac{p}{p-1}, 0]$
\eqref{eq:flowest} holds and we have 
\begin{equation}\label{eq:flowest2}
\lvert x_0(s) \rvert \leq \norm{u}_{L^{p+1}(Q_{3/2})} r^\frac{p^2}{p^2-1} \leq r \left(\norm{u}_{L^{p+1}(Q_{3/2})} r_0^\frac{1}{p^2-1}\right) {\new \leq } \frac{r}{8}\,.
\end{equation}
Hence we can estimate by the triangular inequality and H\"older, by \eqref{eq:flowest2} and by Step 3
\begin{align}
\bigg(\,\mean{t-r^\frac{p}{p-1}}^t \mean{B_\frac{r}{8}(x)} \lvert \theta &-  (\theta)_{B_\frac{r}{8}(x) \times (t-r^\frac{p}{p-1}, t]} \rvert^p \,dy \, ds \bigg)^\frac{1}{p} \\
&\leq 2 \bigg(\,\mean{-r^\frac{p}{p-1}}^0 \mean{B_\frac{r}{8}(x)} \lvert \theta(y,s+t)- (\theta_0)_{B_\frac{r}{4} \times (r^\frac{p}{p-1}, 0]} \rvert^p \,dy \, ds \bigg)^\frac{1}{p} \\
&\leq  4 \bigg(\,\mean{-r^\frac{p}{p-1}}^0 \mean{B_\frac{r}{4}(x + x_0(s))} \lvert \theta(y,s+t)- (\theta_0)_{B_\frac{r}{4} \times (r^\frac{p}{p-1}, 0]} \rvert^p \,dy \, ds \bigg)^\frac{1}{p} \\
&\leq 32\, \mu^{-2} r^{\gamma - \left[ \frac{1-2\alpha}{p-1} + \frac{2\alpha}{p(p-1)}\right]} \varepsilon_2 \, .
\end{align}

\textit{Step 5: By Campanato's Theorem, we deduce that $\theta$ is H\"older continuous in $Q_1$.}\\
By a variant of Campanato's Theorem \cite[Theorem 2.9.]{Giusti2003}, we deduce from Step 4 that \eqref{eqn:ctheta} holds. {\new  Indeed, observe that the sets $B_r(x) \times (t-r^{p/(p-1)}, t]$ are nothing else but balls with respect to the metric  $d((x,t), (y,s)):= \max \{ \lvert x-y \rvert, \lvert t-s \rvert^{(p-1)/p} \}$ on spacetime where in time, as usual for parabolic equations, we only look at backward-in-time intervals. The proof of this version of Campanato's Theorem follows, for instance, line by line \cite[Theorem 1]{Schlag1996} when replacing the parabolic metric by $d\,.$}

\end{proof}

\section{$\varepsilon$-regularity results and proof of Theorem \ref{prop:ereg2}}\label{sec:ereg}
{\new
In this section we prove some $\varepsilon$-regularity results, including Theorem~\ref{prop:ereg2} and its more precise version in Corollary~\ref{prop:fixedscale}, by meeting the smallness requirement of Theorem \ref{prop:ereg1}. As a first result, we deduce in Corollary~\ref{thm:Scheffer} an $\varepsilon$-regularity criterion in terms of a spacetime integral of $\theta$ and $u$ that constitutes an analogue of Scheffer's Theorem~\cite{Scheffer77} for the Navier-Stokes system. As in the case of Navier-Stokes, it implies that the singular set of suitable weak solutions is compact in spacetime (see Step 1 in the proof of Theorem~\ref{thm:main}). In the context of the SQG equation though, in contrast to Navier-Stokes, Corollary~\ref{thm:Scheffer} cannot be used to obtain their almost everywhere smoothness (or any estimate on the dimension of the singular set): The fact that the $L^\infty$-norm is a controlled quantity 
necessitates to rely on spacetime integrals of derivatives of $\theta$ to show local smoothness. In order to pass from Theorem \ref{prop:ereg1} to an $\varepsilon$-regularity criterion involving only fractional space derivatives of $\theta\,,$which are globally controlled through the energy, we need to overcome the following  difficulties:
\begin{itemize}
\item The excess $E^S\,$ related to $\theta$ involves the spacetime average of $\theta\,.$ In particular, in order to use a standard Poincar\'e inequality \eqref{eq:PoincareStd} to pass to a differential quantity, we would need some fractional differentiability in time too. Using the parabolic structure of the equation, we will be able to circumvent this and to establish in Lemma \ref{lem:nonlinearPoincare} a Poincar\'e inequality which is nonlinear but involves only fractional space derivatives. 
\item The $\varepsilon$-regularity criterion of Theorem \ref{prop:ereg1} features the composition of $\theta$ with the flow $x_0\,,$ so that we need some control on the tilting effect of the flow. We will see that at scale $r$, the flow shifts the center of the excess in space by at most $r^{2\alpha - 2/q} \norm{u}_{L^\infty L^q}\,$ (see \eqref{eq:effectflow}). As a consequence, at scale $r$, all quantities related to the excess of $\theta$  will no longer live on parabolic cylinders but rather on modified cylinders $\mathcal{Q}(x,t;r)$ in spacetime, approximately of radius $r^{2\alpha}$ in time and $r^{2\alpha-2/q}$ in space.
Morally $q= \infty$; however, since the Riesz-transform is bounded from $L^\infty \rightarrow {BMO}$ and not from $L^\infty \rightarrow L^\infty$, we introduce the parameter $q$ which should be thought to be arbitrarily large. 
\item We set the excess in $L^p$ for $p > \frac{1+\alpha}{\alpha}$ in order to gain the compactness of the $(p-1)$-energy inequality. In order to exploit the $L^2 W^{\alpha,2}$-control given by the energy via the nonlinear Poincar\'e inequality described in the first point, we are lacking some higher integrability in time. We bypass this issue by factoring out $p-2$ powers of $\theta$ in $L^\infty$.
\end{itemize} 
}

\subsection{An analogue of Scheffer's theorem}\label{sec:Scheffer}{\new We provide a first $\varepsilon$-regularity result featuring spacetime integrals of $\theta$ and $u\,.$ Observe that in agreement with the previous discussion, smooth solutions of \eqref{eq:SQG}--\eqref{eq:u} do, in general, not verify the $\varepsilon$-regularity criterion \eqref{eq:eregcritscheffer} at any small scale.}
\begin{corollary}\label{thm:Scheffer} Given $\alpha \in (\frac{1}{4}, \frac{1}{2})$ there exists  $\varepsilon=\varepsilon(\alpha)>0$ such that if $p=p(\alpha):= \frac{6}{4\alpha-1}$ and $\theta$ is a suitable weak solution of \eqref{eq:SQG}--\eqref{eq:u} on $\R^2 \times(t-(4r)^{2\alpha} , t+ r^{2\alpha}/4]$ satisfying 
\begin{equation}\label{eq:eregcritscheffer}
\frac{1}{r^{(1-2\alpha)p + 2+2\alpha}} \int_{Q_{4r} (x, t + r^{2\alpha}/4)} \left({\mathcal M}\theta^2 \right)^{\frac{p}{2}}(z,s) + \lvert u \rvert^p(z,s) \, dz \, ds<\varepsilon\,,
\end{equation}
then $\theta\in C^{\frac{1}{2}}\left(Q_r (x, t + r^{2\alpha}/4)\right)\,.$ In particular, $\theta$ is smooth in the interior of $Q_{r/2}(x, t + r^{2\alpha}/4)\supseteq B_{r/2}(x) \times (t-r^{2\alpha}/4, t + r^{2\alpha}/4)$ and hence $(x,t)$ is a regular point. 
\end{corollary}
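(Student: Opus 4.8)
The plan is to deduce Corollary~\ref{thm:Scheffer} from the $\varepsilon$-regularity Theorem~\ref{prop:ereg1} by verifying its excess hypothesis \eqref{eq:ereg1ass} at every point of a unit cylinder. First I would remove all parameters by translating $(x,t+r^{2\alpha}/4)$ to the origin and rescaling by $r$ according to \eqref{eq:rescaling}: set $\bar\theta(y,s):=r^{2\alpha-1}\theta(x+ry,\,t+r^{2\alpha}/4+r^{2\alpha}s)$ and analogously for $\bar u$. Since the maximal function commutes with this scaling and the normalizing exponent $(1-2\alpha)p+2+2\alpha$ is exactly the scaling weight of the integrand, the assumption \eqref{eq:eregcritscheffer} becomes the scale-invariant bound $\int_{Q_4}\big[(\mathcal M\bar\theta^2)^{p/2}+|\bar u|^p\big]\,dz\,ds<\varepsilon$, while $\bar\theta$ is again a suitable weak solution on $\R^2\times(-4^{2\alpha},0]$. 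The goal then reduces to showing that, for $\varepsilon=\varepsilon(\alpha)$ small, the excess $E(\bar\theta,\bar u;x',t',1)\le \varepsilon_2/C_1$ holds \emph{uniformly} for all $(x',t')\in Q_1$, where $C_1,\varepsilon_1,\varepsilon_2$ are the constants of Lemma~\ref{lem:changeofvar} and Theorem~\ref{prop:ereg1}.

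Because $Q_1(x',t')\subseteq Q_4$ and $(\mathcal M\bar\theta^2)^{p/2}\ge|\bar\theta|^p$ pointwise, the velocity smallness $(\mean{Q_1(x',t')}|\bar u|^p)^{1/p}\le(\varepsilon/|Q_1|)^{1/p}\le\varepsilon_1$ required by Lemma~\ref{lem:changeofvar} holds for $\varepsilon$ small, so that $E(\bar\theta_0,\bar u_0;\tfrac14)\le C_1\,E(\bar\theta,\bar u;x',t',1)$ for the associated change of variables $(\bar\theta_0,\bar u_0)$. The local parts are immediate: bounding the oscillations by twice the averages gives $E^S(\bar\theta;x',t',1)+E^V(\bar u;x',t',1)\lesssim(\mean{Q_1(x',t')}(|\bar\theta|^p+|\bar u|^p))^{1/p}\lesssim\varepsilon^{1/p}$, uniformly in $(x',t')\in Q_1$.

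The main difficulty is the nonlocal term $E^{NL}(\bar\theta;x',t',1)$, and this is precisely where the maximal function in \eqref{eq:eregcritscheffer} is needed. I would split the supremum over $R\ge\frac14$ into a near régime $\frac14\le R\le2$ and a far régime $R>2$. In the near régime $B_R(x')\subseteq B_3$, so Jensen together with $|[\bar\theta(s)]_{B_1(x')}|\le(\mean{B_1(x')}\bar\theta^2)^{1/2}$ bounds this contribution by a local $L^p$-integral over $B_3$, uniformly small as above. In the far régime the decay factor $(1/R)^{\sigma p}$ is available, but the spatial average reaches the whole plane; the key observation is that for $R\ge2$ and any $z\in B_1$ one has $B_R(x')\subseteq B_{R+2}(z)$ with comparable volumes, whence
\[
\mean{B_R(x')}\bar\theta^2\le C\,\mean{B_{R+2}(z)}\bar\theta^2\le C\,\mathcal M\bar\theta^2(z,s)\qquad\text{for all }z\in B_1.
\]
Averaging this pointwise inequality over $z\in B_1$ replaces the single far-field value by $\mean{B_1}\mathcal M\bar\theta^2(z,s)\,dz$, which is \emph{independent of} $x'$; raising to the power $p/2$ (Jensen, $p\ge2$) and integrating in $s$ controls it by $\int_{Q_4}(\mathcal M\bar\theta^2)^{p/2}<\varepsilon$. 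This yields $E^{NL}(\bar\theta;x',t',1)\lesssim\varepsilon^{1/p}$ uniformly in $(x',t')\in Q_1$, the estimate that a naive pointwise bound by $\mathcal M\bar\theta^2(x',\cdot)$ alone would fail to provide.

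Collecting the three bounds gives $E(\bar\theta,\bar u;x',t',1)\lesssim\varepsilon^{1/p}$ for every $(x',t')\in Q_1$, so choosing $\varepsilon=\varepsilon(\alpha)$ small enforces \eqref{eq:ereg1ass}, and Theorem~\ref{prop:ereg1} applies with the prescribed $p=\frac{6}{4\alpha-1}$. A direct computation shows that this is exactly the exponent for which, letting $\sigma\uparrow2\alpha$ and $\gamma\uparrow\sigma-\frac{2\alpha}{p}$, both H\"older exponents $\delta$ and $\frac{p-1}{p}\delta$ in \eqref{eqn:ctheta} exceed $\frac12$; hence $\bar\theta\in C^{1/2}(Q_1)$ and, undoing the scaling, $\theta\in C^{1/2}(Q_r(x,t+r^{2\alpha}/4))$. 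Since $\frac12>1-2\alpha$ for $\alpha>\frac14$, the H\"older-to-smoothness bootstrap (Lemma~\ref{lem:hoeldertosmooth}) upgrades this to smoothness in the interior of $Q_{r/2}(x,t+r^{2\alpha}/4)$, and the inclusion $(x,t)\in\interior Q_{r/2}(x,t+r^{2\alpha}/4)$ (valid because $2^{-2\alpha}>\frac14$) shows that $(x,t)$ is a regular point. I expect the uniform control of the nonlocal tail $E^{NL}$ to be the only genuine obstacle, the remaining steps being bookkeeping built on Theorem~\ref{prop:ereg1} and Lemma~\ref{lem:changeofvar}.
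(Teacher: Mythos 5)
Your proposal is correct and follows essentially the same route as the paper: rescale to unit scale, verify the excess hypothesis of Theorem \ref{prop:ereg1} uniformly over $(x',t')\in Q_1$ (using Lemma \ref{lem:changeofvar} for the change of variables and the averaging-over-a-small-ball trick to control $E^{NL}$ by $\int_{Q_4}(\mathcal M\theta^2)^{p/2}$ independently of the center), and conclude via Lemma \ref{lem:hoeldertosmooth}. The only cosmetic difference is that you split the nonlocal supremum into near and far regimes, whereas the paper handles all $R\geq\frac14$ at once with the same inclusion $B_R(x')\subseteq B_{2R}(z')$ for $z'\in B_{1/4}(x')$; the substance is identical.
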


\begin{proof}[Proof of Corollary \ref{thm:Scheffer}]
Let $\alpha$ and $p$ as in the statement. By translation and scaling invariance, we can assume w.l.o.g. that $(x, t + r^{2\alpha}/4)=(0,0)$ and $r=1\,,$ so that we assume that
\begin{equation}\label{eq:schefferass}
\int_{Q_4} (\mathcal{M} \theta^2)^\frac{p}{2}(x,t) + \lvert u \rvert^p(x,t)\, dx \, dt \leq \varepsilon  \,.
\end{equation}
 for an $\varepsilon$ yet to be chosen small enough. We observe that $p > \max \{\frac{1+\alpha}{\alpha}, \frac{2\alpha}{\sigma} \}\,$ for any $\sigma > \frac{1}{6} \, .$  We define accordingly $\sigma:= \frac{4\alpha}{3} + \frac{1}{6}$ and  $\gamma:=\frac{2\alpha}{3}+\frac{1}{3}\,.$ Since $\sigma \in (\frac{1}{6}, 2\alpha)$ and $\gamma \in [1-2\alpha , 2\alpha - \frac{2\alpha}{p}) \,,$ this is an admissible choice of the parameters of Theorem \ref{prop:ereg1} and we infer from the latter that $\theta \in C^{\delta, \frac{p-1}{p} \delta}(Q_1)\,,$ with $\delta$ given by \eqref{eqn:ctheta}, provided the smallness requirement \eqref{eq:ereg1ass} holds for any $(x,t) \in Q_1$. Since for $\alpha \in (\frac{1}{4}, \frac{1}{2})$
$$\delta=1- 2\alpha +(4\alpha-1) \left( \frac{2}{3}-\frac{4\alpha^2-7\alpha+3}{3(7\alpha-1)} \right) >1-2\alpha  \,,$$
we deduce from Lemma \ref{lem:hoeldertosmooth} that $\theta$ is smooth in the interior of $Q_{1/2} \, .$ We observe that for any $(x,t) \in Q_1$ we have $Q_1(x,t) \subseteq Q_4$ and hence
\begin{equation}
 \bigg( \, \mean{Q_1(x,t)} \lvert u \rvert^p \,dz \, ds \bigg)^\frac{1}{p} \leq \lvert Q_1 \rvert^{-\frac{1}{p}} \bigg(\int_{Q_4} \lvert u \rvert^p \, dz \,ds \bigg)^\frac{1}{p} \leq \varepsilon^\frac{1}{p}\, .
\end{equation}
Requiring $\varepsilon \leq \varepsilon_1^p\,,$ 
we thus deduce from Lemma  \ref{lem:changeofvar} that $E(\theta_0, u_0; \frac{1}{4}) \leq C_1 E(\theta,u; x,t, 1)$ and hence \eqref{eq:ereg1ass} is enforced for any $(x,t) \in Q_1$ if 
\begin{equation}\label{eq:varepsilon2}
\sup_{(x,t) \in Q_1} \, E(\theta,u; x,t, 1) \leq C_1^{-1} \varepsilon_2\,,
\end{equation}
where $\varepsilon_2>0$ is given by Theorem \ref{prop:ereg1}. Using $  \theta ^2
\leq\mathcal{M}  \theta ^2 $ pointwise almost everywhere, we have
\begin{equation}
E^S(\theta; x, t, 1) + E^V(u; x, t, 1) \leq 2 \left( \int_{Q_4} \lvert \theta \rvert^p \, dz \,ds \right)^\frac{1}{p}+ 2 \left( \int_{Q_4} \lvert u \rvert^p \, dz \, ds \right)^\frac{1}{p} \leq 4\varepsilon^\frac{1}{p} \,.
\end{equation}
As for the non-local part of the excess, we estimate, using again $\theta^2 \leq \mathcal{M} \theta^2$ almost everywhere,
\begin{align}
E^{NL}(\theta; x, t, 1) &\leq \Bigg( \,\mean{t-1}^{t} \sup_{R \geq \frac 14} \frac{1}{R^{\sigma p}} \bigg(\, \mean{B_R(x)} \theta^2 \, dz\bigg)^\frac{p}{2} \, ds \Bigg)^\frac{1}{p} + C\lvert (\theta)_{Q_1(x, t)} \rvert \\
&\leq C \Bigg(\, \mean{t-1}^{t} \sup_{R \geq \frac 14} \frac{1}{R^{\sigma p}} \bigg(  \int_{B_\frac{1}{4}(x)}\mean{B_{2R}(z')} \theta^2(z,s)  \, dz \, dz' \bigg)^\frac{p}{2} \, ds \Bigg)^\frac{1}{p} + C \bigg(\int_{Q_4} \lvert \theta \rvert^p \, dz \, ds \bigg)^\frac{1}{p} \\
&\leq C \Bigg( \mean{t-1}^{t} \sup_{R \geq \frac 14} \frac{1}{R^{\sigma p}} \bigg(  \int_{B_\frac{1}{4}(x)}\mathcal{M}\theta^2(z', s) \, dz' \bigg)^\frac{p}{2} \, ds \Bigg)^\frac{1}{p} + C \varepsilon^\frac{1}{p} \leq C^{NL} \varepsilon^\frac{1}{p} \, .
\end{align}
Hence we reach \eqref{eq:varepsilon2} by choosing $
\varepsilon \leq \min \left\{ ( \varepsilon_1)^p ,  (4+ C^{NL})^{-1}  C_1^{-1}\varepsilon_2)^p \right\} \,.$
\end{proof}

{\new 
\subsection{Nonlinear Poincar\'e inequality of parabolic type}\label{sec:poincare}
We introduce the following scaling-invariant quantity which should be understood as a localized version of the dissipative part of the energy:
\begin{align}
\mathcal{E}(\theta; x,t,r) &:= \frac{1}{r^{2(1-2\alpha) + 2}} \int_{Q_r^*(x,t)} y^b \lvert \overline \nabla \theta^* \rvert^2(z,y,s)\, dz \, dy \, ds \, .
\end{align}
The following Lemma and its proof is inspired by \cite{Struwe1981}, where a parabolic Poincar\'e inequality is obtained for the classical, linear heat equation, and by \cite{OzanskiRobinson2019}, where  a nonlinear Poincar\'e inequality of similar nature is also crucially used in a $\varepsilon$-regularity result. 
\begin{lemma}[Nonlinear Poincar\'e inequality of parabolic type]\label{lem:nonlinearPoincare} Let $\alpha \in (0,1) \,.$ There exists a constant $C=C(\alpha)\geq 1$ such that the following holds: Let $Q_r(x,t) \subseteq \R^2 \times (0, \infty)$ and let $(\theta, u)$ be a Leray-Hopf weak solution of \eqref{eq:SQG}--\eqref{eq:Cauchy}. We assume that the velocity field is obtained by 
\begin{equation}
u(z,s) = \mathcal{R}^\perp \theta (z,s) + f(s) \, 
\end{equation}
for some $f \in L^1_{loc}(\R)$ and that it satisfies $\left[u(s)\right]_{B_r(x)} = 0$ for all $s \in [t-(2r)^{2\alpha}, t]$. Then we have for any $q \in \big[2, \frac{2}{1-\alpha} \big]$ that
\begin{align}
\frac{1}{r^{(1-2\alpha) + \frac{2}{q} + \alpha}} &\Bigg(\int_{t- r^{2\alpha}}^{t} \bigg(\int_{B_r(x)} \lvert \theta(z,s) - (\theta)_{Q_r(x,t)} \rvert^q \, dz \bigg)^\frac{2}{q} \, ds \Bigg)^\frac{1}{2} 
\leq  C \big( \mathcal{E}(\theta; x,t, 3r)^\frac{1}{2} \\
&+\bigg( \frac{1}{r^{2(1-2\alpha)+2+2\alpha}} \int_{Q_{2r}(x,t)} \lvert u (z,s)- [u(s)]_{B_{2r}(x)} \rvert^2 \, dz \, ds \bigg)^\frac{1}{2} \mathcal{E}(\theta; x,t, 3r)^\frac{1}{2}  \big) \, .
\end{align}
\end{lemma}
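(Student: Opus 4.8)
This is a nonlinear parabolic Poincaré inequality, and the strategy follows the Struwe/Ozański–Robinson scheme: reduce the space-time oscillation to a fixed-time spatial Poincaré inequality plus a bound on the time-oscillation of the spatial average of $\theta$, where the latter is controlled by testing the equation. First I would normalize by scaling: using Remark~\ref{rem:rescalingexcess}-type invariance, reduce to $r=1$ and $(x,t)=(0,0)$, so that the claim becomes a bound on $\big(\int_{-1}^{0}\|\theta(s)-(\theta)_{Q_1}\|_{L^q(B_1)}^2\,ds\big)^{1/2}$ in terms of $\mathcal E(\theta;3)^{1/2}$ and the velocity-oscillation factor. Throughout, the admissible range $q\in[2,\frac{2}{1-\alpha}]$ is exactly the interval where the fractional Sobolev embedding $\dot W^{\alpha,2}\hookrightarrow L^{2/(1-\alpha)}$ and hence the spatial Poincaré inequality \eqref{eq:PoincareStd} holds with $n=2$, $p=2$.

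\emph{Step 1: split the oscillation into a spatial and a temporal part.} I would write, for each fixed time $s$,
\[
\|\theta(s)-(\theta)_{Q_1}\|_{L^q(B_1)} \le \|\theta(s)-[\theta(s)]_{B_1}\|_{L^q(B_1)} + C\,\big|[\theta(s)]_{B_1}-(\theta)_{Q_1}\big|,
\]
and handle the two terms separately. The first term is purely spatial: by the Poincaré inequality \eqref{eq:PoincareStd} (with $q\le 2/(1-\alpha)=p^*$) it is bounded by $C[\theta(s)]_{W^{\alpha,2}(B_1)}$, and integrating the square in time and invoking \eqref{eq:GagliardoToExt} from Lemma~\ref{lem:Sobolev} converts the Gagliardo seminorm into the extension integral, giving exactly $\mathcal E(\theta;3)^{1/2}$ after accounting for the enlargement of the ball.

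\emph{Step 2: control the time-oscillation of the spatial average by testing the equation.} This is the heart of the proof and where the parabolic structure and the nonlinearity enter. I would estimate $\sup_{s,s'}\big|[\theta(s)]_{B_1}-[\theta(s')]_{B_1}\big|$ (or, more carefully, the contribution $\int |[\theta(s)]_{B_1}-(\theta)_{Q_1}|^2\,ds$) by plugging a test function $\varphi$ into the weak formulation \eqref{eq:weakform3}, where $\varphi|_{y=0}$ approximates the indicator of $B_1$ in space and is a suitable bump in time. The transport term $\int (u\theta)\cdot\nabla\varphi$ is the nonlinear term: writing $u=(u-[u(s)]_{B_{2}})+[u(s)]_{B_2}$ and using $[u(s)]_{B_1}=0$ together with $\operatorname{div}u=0$ to discard the constant-in-space piece of the velocity against $\nabla\varphi$, the remaining term factorizes as $\|u-[u]_{B_2}\|_{L^2(Q_2)}$ times an $L^2$-oscillation of $\theta$, which is itself re-absorbed via Step~1 into $\mathcal E^{1/2}$; this produces precisely the product term $\big(\tfrac{1}{r^{\cdots}}\int_{Q_{2r}}|u-[u]_{B_{2r}}|^2\big)^{1/2}\mathcal E^{1/2}$ in the statement. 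The dissipative term on the right of \eqref{eq:weakform3}, namely $\int y^b\,\overline\nabla\theta^*\cdot\overline\nabla\varphi$, is controlled directly by $\mathcal E(\theta;3)^{1/2}$ using Cauchy–Schwarz in the weighted space and the bound $\int_{Q_3^*}y^b|\overline\nabla\varphi|^2\lesssim 1$.

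\emph{The main obstacle.} The delicate point is Step~2: getting the nonlinear transport term to produce \emph{exactly} the claimed product $\|u-[u]_{B_2}\|_{L^2}\cdot\mathcal E^{1/2}$ rather than a worse combination. The key leverage is the hypothesis $[u(s)]_{B_1}=0$ (and the representation $u=\mathcal R^\perp\theta+f(s)$, which guarantees $\operatorname{div}u=0$ so that the mean-zero subtraction is legitimate against $\nabla\varphi$): subtracting the spatial mean of $u$ costs nothing because $\int_{B}\nabla\varphi\cdot c\,dx=0$ for constant $c$, yet it replaces $u$ by its oscillation, which is the genuinely controlled velocity quantity. One must also track the enlargement of balls ($B_1\to B_2\to B_3$) carefully so that every spatial Poincaré/extension estimate lives on a ball strictly inside the domain on which the energy quantity $\mathcal E(\cdot;3)$ is measured, and check that the powers of $r$ reassemble into the stated scaling-invariant prefactors; these are routine once the structure above is fixed, but they are where sign and exponent bookkeeping is easiest to get wrong.
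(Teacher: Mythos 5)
Your proposal is correct and follows essentially the same route as the paper: a spatial Poincar\'e inequality for the oscillation around the spatial mean, plus control of the time-oscillation of that mean by testing the weak formulation \eqref{eq:weakform3}, with the transport term handled exactly as you describe (the constant is subtracted from $\theta$ via $\div u=0$, while $u$ itself is already the oscillation thanks to $[u(s)]_{B_r(x)}=0$). The only difference is cosmetic: since the test function cannot be the indicator of $B_1$, the average that naturally appears in Step 2 is a weighted one, which the paper accommodates by running Step 1 with the weighted Poincar\'e inequality \eqref{eq:PoincareWgt} instead of \eqref{eq:PoincareStd}; in your version this costs one extra triangle inequality comparing the plain and weighted averages, controlled again by the spatial Poincar\'e inequality.
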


\begin{proof} By translation and scaling invariance (with respect to \eqref{eq:rescaling}), we may assume $(x,t)=(0,0)\,$ and $r=1\,.$

\textit{Step 1: By means of the weighted Poincar\'e inequality \eqref{eq:PoincareWgt}, we reduce the Lemma to an estimate on weighted space averages computed at two different times. To this aim, let $\omega \in C^\infty_{c}(\R^{3}_+  )$ be a weight such that $\omega|_{y=0}$ is a radial non-increasing function, $0 \leq \omega \leq 1$ and $\omega \equiv 1$ on $\overline B_{1} \times [0, 1]$ and $\omega \equiv 0$ outside $B_{2} \times [0, 2)\,.$}\\ We estimate for fixed time
\begin{align}
\left( \int_{B_1} \lvert \theta(x,t) - (\theta)_{Q_1} \rvert^q \, dx \right)^\frac{1}{q} &\leq \left( \int_{B_{2}} \lvert \theta(x,t) -[\theta(t)]_{\omega|_{y=0}, B_2} \rvert^q \omega(x,0) \,dx \right)^\frac{1}{q} \\
&+ \pi^\frac{1}{q} \left(\big| [\theta(t)]_{\omega|_{y=0}, B_2}-(\theta)_{\omega|_{y=0}, Q_2} \big|+ \big|  (\theta)_{\omega|_{y=0}, Q_2} -(\theta)_{Q_1} \big| \right) \,,
\end{align} 
where we used $\omega(\cdot, 0) \equiv 1$ on $\overline B_1\,.$ Reusing this fact and H\"older, we bound the last term by 
\begin{align}
\big| &(\theta)_{\omega|_{y=0}, Q_2} - (\theta)_{Q_1}\big|  \leq \left(\int_{-1}^0 \left( \mean{B_1} \lvert \theta -  (\theta)_{\omega|_{y=0}, Q_2} \rvert^q \,dx \right)^\frac{2}{q} \, dt\right)^\frac{1}{2}  \\
&\leq \pi^{-\frac{1}{q}} \left(\int_{-1}^0 \left( \int_{B_{2}} \lvert \theta -  [\theta(t)]_{\omega|_{y=0}, B_2} \rvert^q \omega(x,0) \, dx \right)^\frac{2}{q} \, dt \right)^\frac{1}{2}  + \left(\int_{-1}^0 \lvert [\theta(t)]_{\omega|_{y=0}, B_2}- (\theta)_{\omega|_{y=0}, Q_2} \rvert^2 \, dt\right)^\frac{1}{2} \,,
\end{align}
so that we deduce by the weighted Poincar\'e inequality \eqref{eq:PoincareWgt} 
\begin{align}
\left(\int_{-1}^0 \left( \int_{B_1} \lvert \theta(x,t)- (\theta)_{Q_1} \rvert^q \, dx \right)^\frac{2}{q} \, dt \right)^\frac{1}{2} \leq C   \Bigg[ &\left( \int_{-1}^0 [\theta(t)]_{W^{\alpha, 2}(B_{2})}^2 \, dt \right)^\frac{1}{2} \\
&+  \left( \int_{-1}^0 \lvert [\theta(t)]_{\omega|_{y=0}, B_2}- (\theta)_{\omega|_{y=0},Q_2} \rvert^2 \, dt \right)^\frac{1}{2}\Bigg] \, .
\end{align}
The first term on the right-hand side can be expressed in terms of the extension by \eqref{eq:GagliardoToExt}. Since the weight $\omega$ is independent of time, the second term can be estimated by
\begin{equation}
\left( \int_{-1}^0 \lvert [\theta(t)]_{\omega|_{y=0}, B_2}- (\theta)_{\omega|_{y=0}, Q_2} \rvert^2 \, dt \right)^\frac{1}{2} \leq \left( \int_{-2^{2\alpha}}^0 \int_{-2^{2\alpha}}^0 \lvert [\theta(t)]_{\omega|_{y=0}, B_2} - [\theta(s)]_{\omega|_{y=0}, B_2} \rvert^2\, ds\, dt \right)^\frac{1}{2} \,.
\end{equation}

\textit{Step 2: We use the equation to estimate the difference of two weighted space averages computed at different times.}\\
We use the weak formulation \eqref{eq:weakform3} of the equation with time-independent test function $\varphi(x,y):= \sign([\theta(t)]_{\omega|_{y=0}, B_2} -[\theta(s)]_{\omega|_{y=0}, B_2}) \omega(x, y)\,.$ We estimate the right-hand side of \eqref{eq:weakform3} from below and the left-hand side from above for $s,t \in [-2^{2\alpha}, 0]$. As for the lower bound, we have
\begin{align}\label{eq:poincare0}
\int (\theta(x,t) - \theta(x,s)) \varphi(x, 0) \, dx &=\big| [\theta(t)]_{\omega|_{y=0}, B_2} -[\theta(s)]_{\omega|_{y=0}, B_2}\big| \int_{B_{2}} \omega(x,0) \, dx \nonumber \\
&\geq \pi  \big| [\theta(t)]_{\omega|_{y=0}, B_2} -[\theta(s)]_{\omega|_{y=0}, B_2}\big|
\end{align}
since $\omega(\cdot, 0) \equiv 1$ on $\overline B_1\,.$ As for the right-hand side, we estimate by H\"older 
\begin{align} \label{eq:poincare1}
\bigg  \lvert \int_{s}^t \int_{\R^3_+} y^b \overline \nabla \theta^* \cdot \overline \nabla \varphi\, dx \, dy \, d\tau \bigg \rvert &\leq 2^{1-\alpha} \norm{\omega}_{C^1} \lvert Q_{2} \rvert^\frac{1}{2} \left( \int_{Q_2^*} y^b \lvert \overline \nabla \theta^*\rvert^2 \, dx \, dy \, d\tau \right)^\frac{1}{2} \, .
\end{align}
Since $u$ is divergence-free and $[u(\tau)]_{B_1}=0$ for $\tau \in [ -2^{2\alpha}, 0]$ by assumption, we can estimate the nonlinear term by H\"older and the Poincar\'e inequality \eqref{eq:PoincareStd} combined with \eqref{eq:GagliardoToExt}
\begin{align}\label{eq:poincare2}
\left \lvert \int_{s}^t \int u\theta \cdot \nabla \varphi|_{y=0} \, dx \, d \tau \right \rvert  &=\left \lvert \int_{s}^t \int (u- [u(\tau)]_{B_1})(\theta- [\theta(\tau)]_{B_2}) \cdot \nabla \varphi|_{y=0}  \, dx \, d \tau \right \rvert \nonumber\\
&\leq \norm{\omega}_{C^1} \left( \int_{Q_{2}} \lvert u- [u(\tau)]_{B_1} \rvert^2 \, dx \, d\tau \right)^\frac{1}{2} \left( \int_{Q_{2} } \lvert \theta - [\theta(\tau)]_{B_2} \lvert^2 \, dx \, d\tau \right)^\frac{1}{2} \nonumber \\
&\leq C \left( \int_{Q_{2}} \lvert u- [u(\tau)]_{B_2} \rvert^2 \, dx \, d\tau  \right)^\frac{1}{2} \left( \int_{Q_3^*} y^b \lvert \overline{\nabla} \theta^* \rvert^2 \, dx \, dy \, d\tau \right)^\frac{1}{2} \, .
\end{align}
Collecting terms, we have for almost every $s,t \in [-2^{2\alpha}, 0]$ that
\begin{align}
\big| [\theta(t)]_{\omega|_{y=0}, B_2} &- [\theta(s)]_{\omega|_{y=0}, B_2} \big| \lesssim \Bigg(\int_{Q_{3}^*} y^b \lvert \overline \nabla \theta^* \rvert^2 \, dx \, dy \, d \tau \Bigg)^\frac{1}{2}\Bigg(1 + \bigg(\int_{Q_{2}} \lvert u - [u(\tau)]_{B_2} \rvert^2 \,dx \, d\tau \bigg)^\frac{1}{2} \Bigg) \, .
\end{align}
Combining this estimate with Step 1, we conclude.
\end{proof}

{\new
\subsection{The non-local part of excess}\label{sec:nonloc}
We recall from the proof of Proposition~\ref{prop:excessdecay} that the excess related to the velocity can be estimated in terms of $\theta\,.$ More precisely, we have the following:
\begin{lemma}\label{lem:excessofu}  Let $\alpha \in (\frac{1}{4}, \frac{1}{2}),$ $Q_r(x,t) \subseteq \R^2 \times (0, \infty)$ and $\theta \in L^p(\R^2 \times [t-(3r/2)^{2\alpha}, t])\,.$ Consider a velocity field of the form $u(z,s)= \mathcal{R}^\perp \theta(z,s) + f(s)$ for some $f \in L^1_{loc}(\R)\,.$ There exists $C=C(p)\geq 1$ such that
\begin{equation}\label{eq:excessofu}
E^V(u; x, t, r) \leq C \Bigg( \bigg( \, \mean{Q_{\frac{3r}{2}}(x,t)} \lvert \theta(z,s)- [\theta(s)]_{B_{\frac{3r}{2}}(x)} \rvert^p \, dz \,ds \bigg)^\frac{1}{p}+  E^{NL}(\theta; x, t, \frac  32 r) \Bigg)\,.
\end{equation}
\end{lemma}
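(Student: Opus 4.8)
The plan is to isolate and reuse the argument already carried out for the velocity excess $E^V(v_k;\mu)$ in the proof of Proposition~\ref{prop:excessdecay}. By translation invariance and the scaling of Remark~\ref{rem:rescalingexcess} — under which $E^V$, $E^{NL}$ and the $L^p$ average on the right-hand side all carry the same homogeneity — I would first reduce to $(x,t)=(0,0)$ and $r=1$, so that it suffices to bound $E^V(u;0,0,1)$ by the $L^p$ average of $\theta$ on $Q_{3/2}$ and by $E^{NL}(\theta;0,0,3/2)$.

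The key structural remark is that the quantity $u(z,s)-[u(s)]_{B_1}$ entering $E^V$ is insensitive both to the additive function of time $f(s)$ and to adding any \emph{spatially} constant function to $\theta$: indeed $f(s)$ and $\mathcal{R}^\perp$ of a spatial constant are themselves spatially constant (the latter being justified through the symmetric-truncation computation carried out for $w_{k,\rho}^2$ in the proof of Proposition~\ref{prop:excessdecay}) and hence drop out after subtracting the spatial average on $B_1$. Writing $g(z,s):=\theta(z,s)-[\theta(s)]_{B_{3/2}}$, this reduces the problem to estimating $\norm{\mathcal{R}^\perp g(\cdot,s)-[\mathcal{R}^\perp g(s)]_{B_1}}_{L^p(B_1)}$, where $g$ is now centered exactly as in the definition of $E^{NL}(\theta;0,0,3/2)$.

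Next I would split $g=g\chi+g(1-\chi)$ with a cut-off $\chi\in C^\infty_c$ satisfying $\chi\equiv1$ on $B_{5/4}$ and $\supp\chi\subseteq B_{3/2}$, and treat the two pieces separately. For the near field $g\chi\in L^p(\R^2)$ with support in $B_{3/2}$, the Calderón--Zygmund $L^p$-boundedness of the Riesz transform gives
\[
\norm{\mathcal{R}^\perp(g\chi)(s)-[\mathcal{R}^\perp(g\chi)(s)]_{B_1}}_{L^p(B_1)}\leq 2\,\norm{\mathcal{R}^\perp(g\chi)(s)}_{L^p(\R^2)}\leq C\,\norm{g(s)}_{L^p(B_{3/2})};
\]
raising to the power $p$, integrating over $s\in(-1,0]$ and normalising produces exactly the first term $\big(\mean{Q_{3/2}}\lvert\theta-[\theta(s)]_{B_{3/2}}\rvert^p\big)^{1/p}$ up to a universal constant. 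For the far field $g(1-\chi)$, supported in $B_{5/4}^c$ and hence at distance $\tfrac14$ from $B_1$, the integral kernel is no longer singular on $B_1$, and arguing exactly as in Lemma~\ref{lem:boundfirstder} (dyadic decomposition of the tail together with the decay of $\partial_j(\tfrac{x_i-z_i}{\lvert x-z\rvert^3})$) I would bound the Lipschitz seminorm of $v^2:=\mathcal{R}^\perp(g(1-\chi))$ on $B_1$, and therefore its oscillation, by $C\sup_{R\geq1}R^{-\sigma}\mean{B_R}\lvert g(\cdot,s)\rvert$.

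Finally I would convert this far-field bound into the nonlocal excess. By Jensen $\mean{B_R}\lvert g\rvert\leq(\mean{B_R}\lvert g\rvert^{3/2})^{2/3}$, and since $R^{-\sigma}\leq(\tfrac{3/2}{R})^{\sigma}$ for $R\geq1$, the far-field quantity is pointwise in time dominated by the integrand of $E^{NL}(\theta;0,0,3/2)$; integrating the $p$-th power over $(-1,0]\subseteq(-(3/2)^{2\alpha},0]$ yields $\lesssim E^{NL}(\theta;0,0,3/2)$. Combining the near- and far-field contributions and undoing the rescaling gives \eqref{eq:excessofu}. The main obstacle I anticipate is the bookkeeping of the second paragraph: rigorously justifying that passing from $\theta$ to the centered $g$ leaves $u-[u]_{B_1}$ unchanged — i.e.\ that the Riesz transform of the only locally meaningful spatial constant $[\theta(s)]_{B_{3/2}}$ contributes a purely spatially-constant, hence negligible, term — which is precisely the delicate point handled via symmetric truncations in Proposition~\ref{prop:excessdecay}, together with keeping the cut-off radii consistent with both the Calderón--Zygmund step and the non-singular far-field estimate.
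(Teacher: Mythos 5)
Your proposal is correct and follows essentially the same route as the paper, which simply reduces to $(x,t)=(0,0)$, $r=1$ and then repeats line-by-line the estimate of $E^V$ from the proof of Proposition~\ref{prop:excessdecay}: a near-field/far-field splitting of $\mathcal{R}^\perp\theta$ by a spatial cut-off, Calder\'on--Zygmund for the compactly supported piece, the dyadic tail estimate of Lemma~\ref{lem:boundfirstder} for the far field, and symmetric truncations $\chi_\rho$ to dispose of the spatially constant part $[\theta(s)]_{B_{3r/2}}$ and of $f(s)$. Your adjustment of the cut-off radii to match the $Q_{3r/2}$ and $E^{NL}(\cdot;\tfrac32 r)$ appearing on the right-hand side, and the Jensen step converting $\sup_{R\ge1}R^{-\sigma}\mean{B_R}|g|$ into the $E^{NL}$ integrand, are exactly what is needed.
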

After reducing the Lemma to $r=1$ and $(x,t)=(0,0)$, the proof follows line-by-line the estimate of $E^V$ in the proof of Proposition \ref{prop:excessdecay}. We now bound the quantity $E^{NL}$ in terms of a variant of the sharp maximal function introduced in Section \ref{sec:maximal}. 
\begin{lemma}\label{lem:non-localexcess} Let $\alpha \in (\frac{1}{4}, \frac{1}{2})\,$, $Q_r(x,t) \subseteq \R^2 \times (0, \infty)\,$ and $\theta \in L^\infty(\R^2 \times [t-r^{2\alpha}, t]) \cap L^2([t-r^{2\alpha}, t], W^{\alpha,2}(\R^2))\,.$ If $(p, q, \sigma) \in (\frac{1+\alpha}{\alpha}, \infty)  \times [2, \infty) \times (0, 2\alpha)$ satisfy the admissibility criterion
\begin{equation}\label{eq:psigmaq}
\sigma p-(2+ 2\alpha) - \frac{2}{q^2-1} \geq 0 \, ,
\end{equation}
then there exists a constant $C= C(p)\geq 1$ such that
\begin{equation}
\frac{1}{r^{p(1-2\alpha)}}E^{NL}(\theta; x, t, r)^p \leq C \frac{\norm{\theta}_{L^\infty([t-r^{2\alpha}, t] \times \R^2)}^{p-2}}{r^{p(1-2\alpha)+2}} \int_{t-r^{2\alpha}}^{t} \int_{B_\frac{r}{4}(x)} \lvert \theta^\#_{\alpha, q}(z,s) \rvert^{1+\frac{1}{q^2-1}}\, dz \,ds \, .
\end{equation}
\end{lemma}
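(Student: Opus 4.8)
The plan is to establish the inequality by pure real analysis on $\theta$: the hypotheses impose no condition on the velocity, so $u$ plays no role here and everything reduces to the behaviour of the sharp fractional maximal function $\theta^\#_{\alpha,q}$. By the translation and scaling invariance of all quantities involved (one checks, exactly as in Remark~\ref{rem:rescalingexcess}, that under $\tilde\theta(z,s):=r^{2\alpha-1}\theta(rz+x,r^{2\alpha}s+t)$ both sides carry the \emph{same} power of $r$), I would first reduce to $r=1$ and $(x,t)=(0,0)$, so that it suffices to prove
\[
E^{NL}(\theta;1)^p=\int_{-1}^0\sup_{R\geq 1/4}R^{-\sigma p}\Big(\mean{B_R}|\theta(z,s)-[\theta(s)]_{B_1}|^{3/2}\,dz\Big)^{2p/3}ds\leq C\,\|\theta\|_{L^\infty}^{p-2}\int_{-1}^0\int_{B_{1/4}}|\theta^\#_{\alpha,q}(z,s)|^{1+\frac1{q^2-1}}\,dz\,ds.
\]

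The heart of the argument is a pointwise bound, for fixed time $s$, scale $R\geq 1/4$, and an arbitrary \emph{probe point} $z_0\in B_{1/4}$, of the inner deviation $A(R):=\mean{B_R}|\theta(\cdot,s)-[\theta(s)]_{B_1}|^{3/2}$. Writing $\beta:=2(1-1/q^2)\in[3/2,2)$ (so $q\geq 2$ gives $3/2\leq\beta$ and $2/\beta=1+\frac1{q^2-1}$), I would produce two competing estimates. The first is a differential bound: since $|z_0|\leq 1/4\leq R$ one has $B_R(0)\subseteq B_{2R}(z_0)$ with comparable volumes, so after re-centering at $z_0$, splitting $\theta-[\theta]_{B_1}=(\theta-[\theta]_{B_{2R}(z_0)})+([\theta]_{B_{2R}(z_0)}-[\theta]_{B_1})$, applying Jensen ($3/2\leq\beta$) to the first part and a dyadic telescoping of the ball-averages $[\theta]_{B_{2^j}(z_0)}$ to the second, the definition \eqref{defn:fsharpq} of $\theta^\#_{\alpha,q}$ yields $A(R)\leq C\,R^{3\alpha/2}\big(\theta^\#_{\alpha,q}(z_0,s)\big)^{3/(2\beta)}$ (the geometric sum converges because $\alpha>0$). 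The second is the trivial bound $A(R)\leq(2\|\theta\|_{L^\infty})^{3/2}$.

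I would then interpolate the two bounds with weight $\lambda=2/p\in(0,1)$ (this uses $p>\frac{1+\alpha}\alpha>2$), obtaining $A(R)\leq C\,\|\theta\|_{L^\infty}^{3(p-2)/(2p)}R^{3\alpha/p}\big(\theta^\#_{\alpha,q}(z_0,s)\big)^{3/(\beta p)}$. Raising to the power $2p/3$ and multiplying by $R^{-\sigma p}$ gives
\[
R^{-\sigma p}A(R)^{2p/3}\leq C\,\|\theta\|_{L^\infty}^{p-2}\,R^{2\alpha-\sigma p}\big(\theta^\#_{\alpha,q}(z_0,s)\big)^{1+\frac1{q^2-1}}.
\]
The admissibility criterion \eqref{eq:psigmaq} ensures $\sigma p\geq 2+2\alpha+\frac2{q^2-1}\geq 2\alpha$, so $2\alpha-\sigma p\leq 0$ and the supremum over $R\geq 1/4$ is attained at $R=1/4$, contributing only a constant (bounded by $4^p$). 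Crucially this estimate holds for \emph{every} $z_0\in B_{1/4}$ while its left-hand side is independent of $z_0$; averaging over $z_0\in B_{1/4}$ and integrating in $s\in[-1,0]$ then produces exactly the claimed inequality (the average $\mean{B_{1/4}}$ differs from $\int_{B_{1/4}}$ only by the fixed volume $|B_{1/4}|$).

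The main obstacle is the tension at large $R$: the differential bound grows like $R^{3\alpha/2}$ and on its own cannot beat the weight $R^{-\sigma p}$, since $\sigma<2\alpha$ is permitted. The interpolation against the scale-invariant $L^\infty$-bound is precisely the device that dampens this growth to $R^{2\alpha}$ (after the power $2p/3$), at which point admissibility closes the estimate; the re-centering to the probe point $z_0$ is the secondary device that converts the origin-centered quantity appearing in $E^{NL}$ into the spatial integral $\int_{B_{r/4}}|\theta^\#_{\alpha,q}|$ on the right-hand side. I would finally note that, as in \eqref{eq:pointwiseestforsharpmaximalfunction}, these sharp-maximal-function integrals are globally controlled through Lemma~\ref{lem:sharpmax}, which is what makes the right-hand side a usable quantity in the subsequent covering argument.
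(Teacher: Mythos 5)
Your proof is correct and follows essentially the same route as the paper's: reduction to unit scale by the scaling \eqref{eq:rescaling}, recentring at a probe point $z_0\in B_{1/4}$, Jensen to pass from the $3/2$-average to the $2(1-1/q^2)$-average controlled by $\theta^\#_{\alpha,q}$, extraction of $p-2$ powers of $\|\theta\|_{L^\infty}$ (which the paper performs at the outset and you phrase as an interpolation with weight $2/p$ --- the two are identical), control of the supremum over $R$ via the admissibility criterion, and averaging over $z_0$. The only substantive variation is your dyadic telescoping of $|[\theta]_{B_{2R}(z_0)}-[\theta(s)]_{B_1}|$, which costs only $R^{\alpha}$ where the paper's cruder Jensen absorption of the average-difference term costs an extra $R^{2+2/(q^2-1)}$, so your argument in fact closes under the weaker requirement $\sigma p\geq 2\alpha$; this is harmless, since \eqref{eq:psigmaq} implies it.
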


\begin{proof} By translation and scaling invariance, we may assume $(x,t)=(0,0)$ and $r=1$. By factoring out $p-2$ powers in $L^\infty\,,$ by adding and subtracting $[\theta(s)]_{B_{1/4}}$ for fixed time $s$ and radius $R$, and by reabsorbing $\lvert [\theta(s)]_{B_1}- [\theta(s)]_{B_{1/4}} \rvert$ in the supremum, we have
\begin{align}
E^{NL}(\theta; 1)^p &\leq (2 \norm{\theta}_{L^\infty(\R^2 \times [-1, 0])})^{p-2} \,  \int_{-1}^0 \sup_{R \geq \frac{1}{4}} \left( \frac{1}{R} \right)^{\sigma p} \left(\, \mean{B_R} \lvert \theta(x,s)- [\theta(s)]_{B_1}\rvert^\frac{3}{2} \, dx \right)^{\frac{4}{3} } \, ds \\
&\lesssim    \norm{\theta}_{L^\infty(\R^2 \times [-1, 0])}^{p-2} \, \int_{-1}^0 \sup_{R \geq \frac 14} \left( \frac{1}{R} \right)^{\sigma p} \left(\,\mean{B_R} \lvert \theta(x,s)- [\theta(s)]_{B_\frac{1}{4}}\rvert^\frac{3}{2} \, dx \right)^\frac{4}{3} \, ds \, .
\end{align}
We estimate the argument of the supremum for fixed time $s$ and radius $R \geq \frac{1}{4} \, $ by the triangular inequality and H\"older
\begin{align}\label{eq:bli}
\bigg(\, \mean{B_R} \lvert \theta - [\theta(s)]_{B_\frac{1}{4}} \rvert^\frac{3}{2} \, dx \bigg)^\frac{4}{3} \nonumber
&\leq 2 \Bigg( \bigg( \,\mean{B_R} \lvert \theta - [\theta(s)]_{B_R} \rvert^{2(1- 1/q^2)} \, dx \bigg)^{1+ \frac{1}{q^2-1}}  + \lvert [\theta(s)]_{B_\frac{1}{4}}- [\theta(s)]_{B_R} \rvert^2 \Bigg)  \nonumber \\
&\leq 4 \left( (4R)^2 \mean{B_R} \lvert \theta- [\theta(s)]_{B_R} \rvert^{2(1- 1/q^2)} \, dx \right)^{1+ \frac{1}{q^2-1}} \,.
\end{align}
For $z \in B_{1/4}$ it holds $B_R \subseteq B_{2R}(z)$, so that by the triangular inequality and by averaging over $z \in B_{1/4}\,,$ we have 
\begin{align}\label{eq:bla}
\mean{B_R} \lvert \theta(x,s)- [\theta(s)]_{B_R}\rvert^{2(1-1/q^2)}\, dx &\leq 4 \, \, \mean{B_\frac{1}{4}} \mean{B_{2R}(z)} \lvert \theta(x,s)- [\theta(s)]_{B_{2R}(z)}\rvert^{2(1-1/q^2)}\, dx \, dz \nonumber \\
&\leq  32 \, R^{2\alpha(1-1/q^2)}\, \mean{B_\frac{1}{4}} \theta^\#_{\alpha, q}(z,s) \, dz \, .
\end{align}
We combine \eqref{eq:bli}--\eqref{eq:bla} and use H\"older to bring the power $1+\frac{1}{q^2-1}$ inside the integral. We obtain 
\begin{equation}
E^{NL}(\theta;1) \leq C \norm{\theta}_{L^\infty(\R^2 \times [-1, 0])}^{p-2} \,  \int_{-1}^0  \int_{B_\frac{1}{4}} \lvert \theta^\#_{\alpha, q}(z,s) \rvert^{1+\frac{1}{q^2-1}} \, dz \, ds \, ,
\end{equation}
provided 
\begin{equation}\label{eq:supfinite}
\sup_{R \geq \frac{1}{4}} \left(\frac{1}{R}\right)^{\sigma p - (2+2\alpha) -\frac{2}{q^2-1}} < + \infty\,.
\end{equation}
Observe that this is ensured through \eqref{eq:psigmaq} and that the supremum can be estimated from above by $4^{p}$ such that the constant of the Lemma depends only on $p\,.$
\end{proof}
}

\subsection{Proof of the Theorem \ref{prop:ereg2}}\label{sec:fixedscale} The following Corollary of Theorem \ref{prop:ereg1} gives a different version of the $\varepsilon$-regularity criterion in terms of $\theta$ rather than its composition with the flow $\theta_0\,.$ Theorem \ref{prop:ereg2} will be an immediate consequence. To this aim, we introduce the following modified balls and cylinders (backwards and centered in time respectively) which are enlarged in space in accordance with the ``intrinsic effect" of the flow (see \eqref{eq:effectflow}):
\begin{align}
&\mathcal{B}(x;r):= B_{K_q r^{2\alpha- 2/q}}(x) \, \text{ and } \mathcal{B}^*(x;r):= \mathcal{B}(x;r) \times [0, r) \,, \\
&\mathcal{Q}(x,t;r) := \mathcal{B}(x;r)\times (t-r^{2\alpha}, t] \, \text{ and } \mathcal{Q}^*(x,t;r) := \mathcal{B}^*(x;r)\times (t-r^{2\alpha}, t]\,,\\
&\mathcal{C}(x,t;r) := \mathcal{B}(x;r)\times (t-r^{2\alpha}, t+r^{2\alpha}) \, \text{ and } \mathcal{C}^*(x,t;r) := \mathcal{B}^*(x;r) \times (t-r^{2\alpha}, t+ r^{2\alpha}) \,,
\end{align}
where 
\begin{equation}\label{eq:Kq}
K_q=K_q(u; x,t,r) : = 2 \max \left \{\norm{u}_{L^\infty([t-r^{2\alpha}, t+r^{2\alpha}], L^q(\R^2))}, r^{1-2\alpha+ 2/q} \right\} \, \,.
\end{equation}
To shorten notation, we will often omit the dependence of $K_q$ on $u$ and $r\,,$ and whenever $(x,t)=(0,0)\,,$ we will omit to specify the center of the balls and cylinders. The following remark justifies that one should really think of $\mathcal{B}(x;r)$ as a enlarged balls of radius $r^{2\alpha - 2/q}\,. $ 

\begin{remark}[Upper bound on $K_q$]\label{rem:upperboundKq} For $0 < r \leq (t/2)^{\frac{1}{2\alpha}}$ by Calderon-Zygmund, Theorem \ref{thm:LerayConstWu} and the energy inequality \eqref{e:g_energy_1} of Leray-Hopf weak solutions
\begin{equation}
\norm{u}_{L^\infty([t-r^{2\alpha}, t+r^{2\alpha}], L^q(\R^2))} \leq C \norm{\theta}_{L^\infty([t-r^{2\alpha}, t+r^{2\alpha}], L^q(\R^2))} \leq C_2 \norm{\theta_0}_{L^2} t^{-\frac{1}{2\alpha}(1- 2/q)}
\end{equation}
such that for $0<r \leq r_0:= \min \left \{  (t/2)^{\frac{1}{2\alpha}},  \left(  C_2 \norm{\theta_0}_{L^2} t^{-\frac{1}{2\alpha}(1- 2/q)}\right)^{1/(1-2\alpha + 2/q)} \right \}$ and any $x \in \R^2$
\begin{equation}\label{eq:upperboundKq}
K_q(u; x,t,r) \leq 2C_2 \norm{\theta_0}_{L^2} t^{-\frac{1}{2\alpha}(1- 2/q)} \,.
\end{equation}
\end{remark}
{\new 

\begin{corollary}\label{prop:fixedscale} Let $\alpha_0:=  \frac{1+\sqrt{33}}{16}$, $\alpha \in [ \alpha_0, \frac 12)\,,$ $q \geq 8 \,$ and  $p=p(q):= \frac{1+\alpha}{\alpha} + \frac 1q \,.$ There exists a universal $\varepsilon=\varepsilon(\alpha)\in (0,1)$ such that the following holds: If $(\theta, u)$ is a suitable weak solution to \eqref{eq:SQG}--\eqref{eq:u} on $\R^2 \times [t- (4r)^{2\alpha}, t]$ satisfying
\begin{equation}\label{eq:hypfixedscale}
\frac{\norm{\theta}_{L^\infty(\R^2 \times [t-(4r)^{2\alpha}, t])}^{p-2}}{(4r)^{p(1-2\alpha)+ 2}}   \left( \int_{\mathcal Q^*(x,t;4r)} y^b \lvert \overline \nabla \theta^* \rvert^2 \, dz \, dy \,ds + \int_{\mathcal Q(x,t;4r)} \lvert \theta^\#_{\alpha,q}  \rvert^{1 + \frac{1}{q^2-1}} \, dz \, ds \right) \leq \varepsilon \,,
\end{equation}
then $\theta$ is smooth in the interior of $Q_{r/2}(x,t)\,.$ 
\end{corollary}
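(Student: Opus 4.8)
The plan is to verify the smallness hypothesis \eqref{eq:ereg1ass} of Theorem~\ref{prop:ereg1} at every point of a suitable cube and then upgrade the resulting H\"older continuity to smoothness. After translating and rescaling according to \eqref{eq:rescaling} so that $(x,t)=(0,0)$ and $r=1$ (so that the $4r$-cylinder becomes the $4$-cylinder), I would first fix the free parameters of Theorem~\ref{prop:ereg1}: keep $p=\frac{1+\alpha}{\alpha}+\frac1q$ as prescribed, take $\sigma$ slightly below $2\alpha$ and $\gamma$ slightly below $\sigma-\frac{2\alpha}{p}$, and check that for $q\geq 8$ the admissibility constraint \eqref{eq:psigmaq} of Lemma~\ref{lem:non-localexcess} and the range $q\in[2,\frac{2}{1-\alpha}]$ needed for Lemma~\ref{lem:nonlinearPoincare} (where I would simply use the exponent $2$) are met, while the H\"older exponent $\delta$ of \eqref{eqn:ctheta} satisfies $\delta>1-2\alpha$. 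This last inequality is precisely the bottleneck that forces $\alpha\geq\alpha_0$: optimizing $\sigma\to 2\alpha$, $\gamma\to\sigma-\frac{2\alpha}{p}$ in $\delta=\gamma-\frac{1}{p-1}(1-2\alpha+\frac{2\alpha}{p})$ and imposing $\delta>1-2\alpha$ reduces to the quadratic condition $8\alpha^2-\alpha-1\geq 0$, i.e. $\alpha\geq\alpha_0=\frac{1+\sqrt{33}}{16}$. Once $\delta>1-2\alpha$ is secured, Theorem~\ref{prop:ereg1} followed by Lemma~\ref{lem:hoeldertosmooth} delivers smoothness in the interior, hence on $Q_{r/2}(x,t)$.

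The core of the argument is to bound the three parts of the excess $E=E^S+E^V+E^{NL}$, at unit scale and centered at every admissible point, by the two differential quantities in \eqref{eq:hypfixedscale}. For the non-local part I would invoke Lemma~\ref{lem:non-localexcess}, which controls $r^{-p(1-2\alpha)}(E^{NL})^p$ by $\norm{\theta}_{L^\infty}^{p-2}\int\lvert\theta^\#_{\alpha,q}\rvert^{1+1/(q^2-1)}$, exactly the second term of \eqref{eq:hypfixedscale}. The velocity excess $E^V$ is reduced, via Lemma~\ref{lem:excessofu} and the coupling \eqref{eq:u}, to an $E^S$-type term plus $E^{NL}$. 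Finally $E^S$ is handled by the nonlinear parabolic Poincar\'e inequality of Lemma~\ref{lem:nonlinearPoincare}: factoring out $p-2$ powers of $\theta$ in $L^\infty$, so that $\mean{Q}\lvert\theta-(\theta)_Q\rvert^p\lesssim\norm{\theta}_{L^\infty}^{p-2}\mean{Q}\lvert\theta-(\theta)_Q\rvert^2$, reduces the $L^p$ spacetime excess to the $L^2$ quantity controlled by Lemma~\ref{lem:nonlinearPoincare}, namely by $\mathcal{E}(\theta)^{1/2}(1+\textrm{vel}^{1/2})$ with $\mathcal{E}$ built from $\int y^b\lvert\overline\nabla\theta^*\rvert^2$. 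Crucially, the $\norm{\theta}_{L^\infty}^{p-2}$ produced by factoring cancels against the one dividing \eqref{eq:hypfixedscale}, yielding $E^S\lesssim\varepsilon^{1/p}(1+\textrm{vel})^{1/p}$; the velocity fluctuation factor is in turn of higher order in $\mathcal{E}$, being controlled through Calder\'on--Zygmund and \eqref{eq:GagliardoToExt}, so the three bounds combine to give $E(\theta,u;\cdot,1)\lesssim\varepsilon^{c}$ for some $c>0$.

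The main obstacle, and the reason for the enlarged, time-centered cylinders $\mathcal{Q},\mathcal{Q}^*$, is twofold. First, Lemma~\ref{lem:nonlinearPoincare} requires the velocity to have zero spatial average on the relevant balls; since subtracting a time-dependent mean $[u(s)]_B$ changes neither $E^S$ nor $E^V$ and keeps $u$ of the admissible form $\mathcal{R}^\perp\theta+f(s)$, I would enforce the zero-average condition this way. Second, passing from $E(\theta,u;\cdot,1)$ to $E(\theta_0,u_0;\frac14)$ through Lemma~\ref{lem:changeofvar} introduces the flow $x_0(s)$, whose displacement I would estimate by $\lvert x_0(s)\rvert\lesssim r^{2\alpha-2/q}\norm{u}_{L^\infty L^q}\lesssim K_q\,r^{2\alpha-2/q}$; this is exactly why every integral quantity must be placed on the spatially enlarged balls $\mathcal{B}(\cdot;r)=B_{K_q r^{2\alpha-2/q}}$ rather than on $B_r$, so that the tilted cylinders produced by the change of variables remain inside $\mathcal{Q}(0,0;4)$. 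Keeping track of this geometric bookkeeping, in particular ensuring that the factor $4$ leaves enough room for both the range of admissible centers and the flow displacement, is the delicate point of the proof.

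To close the argument I would additionally verify the smallness $\big(\mean{Q_1(x,t)}\lvert u\rvert^p\big)^{1/p}\leq\varepsilon_1$ required by Lemma~\ref{lem:changeofvar}, again obtained from the $L^\infty L^q$-bound on $u$ together with H\"older upon choosing $\varepsilon$ small. This gives $E(\theta_0,u_0;\frac14)\leq C_1 E(\theta,u;\cdot,1)\leq\varepsilon_2$ uniformly over the required centers; Theorem~\ref{prop:ereg1} then yields $\theta\in C^{\delta,\frac{p-1}{p}\delta}$, and since $\delta>1-2\alpha$, Lemma~\ref{lem:hoeldertosmooth} upgrades this to smoothness in the interior of $Q_{r/2}(x,t)$, which is the assertion.
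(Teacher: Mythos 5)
Your proposal follows the paper's own route essentially step for step: reduce to unit scale, tune $\sigma$ and $\gamma$, bound $E^{S}$ via the nonlinear parabolic Poincar\'e inequality of Lemma~\ref{lem:nonlinearPoincare} after factoring out $p-2$ powers of $\theta$ in $L^\infty$, bound $E^{V}$ via Lemma~\ref{lem:excessofu}, bound $E^{NL}$ via Lemma~\ref{lem:non-localexcess}, control the tilting of the flow by $K_q$ so that everything lands in the enlarged cylinders $\mathcal Q(x,t;4r)$, and then conclude with Theorem~\ref{prop:ereg1} and Lemma~\ref{lem:hoeldertosmooth}. Two points in your write-up are, however, incorrect as stated. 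First, the origin of $\alpha_0$: optimizing $\sigma\to 2\alpha$ and $\gamma\to\sigma-\frac{2\alpha}{p}$, the condition $\delta>1-2\alpha$ is equivalent to $p(4\alpha-1)>4\alpha$, which with $p=\frac{1+\alpha}{\alpha}+\frac1q$ amounts (for $q$ large) to $3\alpha>1$, not to $8\alpha^2-\alpha-1\geq 0$. The quadratic $8\alpha^2-\alpha-1\geq0$ is instead the condition that the dimension bound $\frac{1}{2\alpha}\big(\frac{1+\alpha}{\alpha}(1-2\alpha)+2\big)$ of Theorem~\ref{thm:main} does not exceed $3$ (see Remark~\ref{rem:alpha0}); this is where $\alpha_0$ comes from. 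Since $\alpha_0>\frac13$ (and also $\alpha_0>\sqrt2-1$, the threshold the paper actually verifies with its concrete choice $\gamma=2\alpha-\frac{4\alpha^2}{1+\alpha}$), your conclusion that $\delta>1-2\alpha$ holds on $[\alpha_0,\frac12)$ is still true, but your stated reason for it is not.

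Second, the zero-average hypothesis of Lemma~\ref{lem:nonlinearPoincare} cannot be enforced by ``subtracting a time-dependent mean $[u(s)]_B$'' from the velocity alone: the pair $(\theta,\,u-[u(s)]_B)$ no longer solves the transport equation, and the proof of that lemma uses the weak formulation in an essential way to compare spatial averages at different times. The zero average must be produced by the change of variables of Lemma~\ref{lem:changeofvar}, which simultaneously translates $\theta$ along the flow $x_0$ and yields a genuine suitable weak solution $(\theta_0,u_0)$ with $[u_0(s)]_{B_{1/4}}=0$. For the same reason one should estimate $\sup_{(x,t)\in Q_1}E(\theta_0,u_0;\frac14)$ directly in terms of the differential quantities (transferring the integrals of $\theta_0$ back to integrals of $\theta$ on the enlarged cylinders via the bound $\lvert x_0(s)\rvert\leq K_q$), rather than first bounding $E(\theta,u;x,t,1)$ and then invoking the comparison $E(\theta_0,u_0;\frac14)\leq C_1E(\theta,u;x,t,1)$ of Lemma~\ref{lem:changeofvar}: the Poincar\'e step is simply not available for the untranslated pair $(\theta,u)$, whose velocity has no reason to have zero average on $B_1(x)$. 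Your final paragraph conflates these two routes; the rest of your outline is consistent with the correct one.
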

\begin{remark}[Justification of $\alpha_0$]\label{rem:alpha0} $\alpha_0$ is the threshold until which both the smallness hypothesis of Corollary \ref{prop:fixedscale} is verified, at sufficiently small scale, for smooth solutions at any point $(x,t)$ in spacetime and the dimension estimate of Theorem  \ref{thm:main} is non-trivial. Indeed, for $\alpha> \alpha_0$ it holds
\begin{equation}\frac{1}{2\alpha} \Big(\frac{1+\alpha}{\alpha}(1-2\alpha)+ 2 \Big) < 3 \,.
\end{equation}
\end{remark}

Before proceeding with the proof, let us show that Theorem \ref{prop:ereg2} is an immediate consequence of Corollary \ref{prop:fixedscale}. 

\begin{proof}[Proof of Theorem \ref{prop:ereg2}] Let $\alpha\,, p$ and $q$ as in the statement and assume that \eqref{eq:blabla} holds. Observe that $\mathcal{C}(x,t; r) \supseteq \mathcal{Q}(x, t + r^{2\alpha}/16; r)\,.$ By \eqref{eq:pointwiseestforsharpmaximalfunction} and H\"older we deduce the pointwise estimate
\begin{equation}\label{eq:bliblablu}
\theta^\#_{\alpha, q}(z,s) \leq\mathcal{M}\big( \left(D_{\alpha, 2} \,\theta \right)^{2(1-1/q^2)}\big)(z,s) \leq \big[\mathcal{M}\big( \left(D_{\alpha, 2} \,\theta \right)^2 \big)\big]^{1-1/q^2}(z,s) \,.
\end{equation}
We infer that $\theta$ satisfies \eqref{eq:hypfixedscale} at the radius $r/4$ and the point $(x, t+ r^{2\alpha}/16)\,.$ We deduce from Corollary \ref{prop:fixedscale} that $\theta$ is smooth in the interior of $Q_{r/8}(x, t+r^{2\alpha}/16)$ which contains the open ball $B_{r/8}(x) \times (t-r^{2\alpha}/16, t+r^{2\alpha}/16) \, .$
\end{proof}

\begin{proof}[Proof of Corollary \ref{prop:fixedscale}] By translation and scaling invariance, we assume $(x,t)=(0,0)$ and $r=1\, .$

\textit{Step 1: We tune the free parameters $\sigma$ and $\gamma$.}\\
We define $\sigma := 2\alpha - \frac{1}{q^2}\,.$ Observe that with this choice the triple $(\sigma, p, q)$ satisfies the hypothesis of Lemma \ref{lem:non-localexcess}. Indeed, recalling that by assumption $\alpha \geq \alpha_0 > \frac{2}{5}$, we have for $q \geq 8$ that 
\begin{align}
\sigma p - (2+2\alpha) - \frac{2}{q^2-1} =  \frac{1}{q}\left(2\alpha- \frac{1}{q}\left( \frac{1+\alpha}{\alpha} + \frac{1}{q} +\frac{2q^2}{q^2-1}\right)\right) > \frac{1}{q}\left(2\alpha - \frac{5}{4}\right) >0 \, .
\end{align}
We introduce also $\gamma:=2\alpha - \frac{4\alpha^2}{1+\alpha} \in [1-2\alpha, \sigma - 2\alpha/p)$, so that the triple $(\sigma, p, \gamma)$ satisfies the hypothesis of Theorem \ref{prop:ereg1}. We conclude from the latter that $\theta \in C^{\delta, (1-1/p) \delta}(Q_r) \subseteq L^\infty ( (-1,0); C^{\delta}(B_1))\,,$  with $\delta$ given by \eqref{eqn:ctheta},
provided
\begin{equation}\label{eq:smallnessreq}
\sup_{(x,t) \in Q_1} E(\theta_{0}, u_{0}; \frac{1}{4}) \leq \varepsilon_2\,,
\end{equation}
where for $(x,t) \in Q_1$ fixed, we define $\theta_{0}(z,s)= \theta(z+x_0(s)+x, s+t)\,,$ $u_{0}(z,s)=u(z+x_0(s)+x, s+t) - \dot x_0(s)$ and $x_0$ is the flow given by Lemma \ref{lem:changeofvar}. We have
$$\delta= \gamma - \frac{1}{p-1} \left[ 1-2\alpha + \frac{2\alpha}{p} \right] > 2\alpha -\frac{4\alpha^2}{1+\alpha} -  \alpha \left[ 1-2\alpha + \frac{2\alpha^2}{1+\alpha} \right]\,, $$
where the right-hand side exceeds $1-2\alpha$ for $\alpha\geq \sqrt{2}-1$, so in particular for $\alpha \geq \alpha_0\,.$
We deduce from Lemma \ref{lem:hoeldertosmooth} that $\theta$ is smooth in the interior of $Q_{1/2}\,.$ We are thus left to verify that \eqref{eq:smallnessreq} can be enforced by requiring \eqref{eq:hypfixedscale}.

\textit{Step 2: We bound the full excess $\sup_{(x,t) \in Q_1}E(\theta_0, u_0; \frac 14)\,.$}\\
For $(x,t) \in Q_1\,$ fixed, we estimate by factoring out $p-2$ powers in $L^\infty$, by Lemma \ref{lem:nonlinearPoincare}, by H\"older and Young
\begin{align}
E^S(\theta_{0}; \frac{1}{4})^p &\lesssim \norm{\theta_{0}}_{L^\infty(Q_\frac{1}{4})}^{p-2} \mathcal{E}(\theta_{0}; \frac{3}{4}) + \norm{\theta_{0}}_{L^\infty(Q_\frac{1}{4})}^{p-2} \mathcal{E}(\theta_{0}; \frac{3}{4}) E^V(u_{0}; \frac{1}{2})^2 \\
&\lesssim \norm{\theta_{0}}_{L^\infty(\R^2 \times [-1, 0])}^{p-2} \mathcal{E}(\theta_{0}; 1) +\left( \norm{\theta_{0}}_{L^\infty(\R^2 \times [-1, 0])}^{p-2} \mathcal{E}(\theta_{0}; 1) \right)^{1+\frac{2}{p-2}} 
+ E^V(u_0; \frac{1}{2})^p \, .
\end{align}
Moreover, by Lemma \ref{lem:excessofu} and \eqref{eq:PoincareStd} combined with \eqref{eq:GagliardoToExt}, recalling that  $u_{0}=\mathcal{R}^\perp \theta_{0}- \dot x_0(s)$, we have
\begin{align}
E^V(u_{0}; \frac{1}{2})^p &\lesssim \int_{Q_\frac{3}{4}} \lvert \theta_{0} - [\theta_0(s)]_{B_\frac{3}{4}} \rvert^p\, dz \, ds + E^{NL}(\theta_{0}; \frac{3}{4})^p \lesssim  \norm{\theta_{0}}_{L^\infty(Q_\frac{3}{4})}^{p-2} \mathcal{E}(\theta_{0}; 1) + E^{NL}(\theta_{0}; \frac{3}{4})^p \, .
\end{align}
Collecting terms and applying Lemma \ref{lem:non-localexcess}, we deduce that for every $(x,t) \in Q_1$ with a constant $C=C(\alpha)\geq 1$ ( observe that $p$ cannot exceed $\frac{1+\alpha}{\alpha}+1$)
\begin{align}\label{eq:Step1}
E(\theta_{0}, u_{0}; \frac 14)^p 
\leq C \bigg(  \norm{\theta_{0}}_{L^\infty(\R^2 \times [-1, 0])}^{p-2} &\mathcal{E}(\theta_{0}; 1) +\left( \norm{\theta_{0}}_{L^\infty(\R^2 \times [-1, 0])}^{p-2} \mathcal{E}(\theta_{0}; 1) \right)^{1+\frac{2}{p-2}} \nonumber \\
&+ \norm{\theta_0}_{L^\infty(\R^2 \times [-1, 0])}^{p-2} \int_{-1}^0 \int_{B_\frac{1}{4}} |(\theta_0)^\#_{\alpha, q}|^{1 + \frac{1}{q^2-1}} \, dz \, ds \bigg)
\,.
\end{align}

\textit{Step 3: We estimate the tilting effect of the flow. To this aim, introduce a parameter $q \geq 8$ (to ensure that $2\alpha -2/q\geq 1/4>0$).\\}
For $(x,t) \in Q_1$ and $s \in [-1, 0]$ we have by the definition of the flow \eqref{eq:ODE}
\begin{equation}
\lvert \dot x_0(s) \rvert \leq \lvert B_\frac{1}{4} \rvert^{-\frac{1}{q}} \norm{u}_{L^\infty( [t-1, t ], L^q(\R^2))} \leq 2 \norm{u}_{L^\infty([ -4^{2\alpha},0], L^q(\R^2))}\leq  K_q(u;4) \, .
\end{equation}
Hence the center of the excess in space can be shifted by at most
\begin{equation}\label{eq:effectflow}
\sup_{(x,t) \in Q_1} \sup_{s \in [-1, 0]} \lvert x_0(s) \rvert \leq K_q(u;4)  \,.
\end{equation} 
Since $\theta_{0}$ is just a spatial translation of $\theta$, we estimate  
$\norm{\theta_{0}}_{L^\infty(\R^2 \times [-1, 0])} \leq \norm{\theta}_{L^\infty(\R^2 \times [- 4^{2\alpha},0])}  \,$ for $(x,t) \in Q_1\,.$ Recall that the extension is obtained by convolution with a Poisson kernel. Since translation and convolution commute, we have $(\theta_0)^*(z,y,s)= \theta^*(z+ x_0(s)+x, y,s+t)$ and hence
\begin{align}\label{eq:newdiffquant}
\mathcal{E}(\theta_{0}; 1)
&=\int_{Q_1^*} y^b \lvert \overline \nabla \theta^* \rvert^2 (z+ x_0(s)+x, y,s+t) \, dz \, dy \, ds \leq \int_{\mathcal{Q}^*(4)} y^b \lvert \overline{\nabla} \theta^* \rvert^2 (z,y, s) \, dz \, dy \, ds \,.
\end{align}
We used in the last inequality that for $(x,t) \in Q_1$ it holds $t-1\geq - 4^{2\alpha}$ and
\begin{equation}\label{eq:runningoutofnames}
 B_1(x) + x_0(s) \subseteq B_{2} + B_{K_q(u;4)} \subseteq \frac 14 \mathcal{B}(4) + \frac{1}{4^{2\alpha- 2/q}} \mathcal{B}(4) \subseteq   \mathcal{B}(4)
\end{equation}
for any $s \in [-1, 0]$ by \eqref{eq:effectflow}. As for the remaining term in \eqref{eq:Step1}, we observe that $(\theta_0)^\#_{\alpha,q} (z,s) = \theta^\#_{\alpha, q}(z+x_0(s)+x, s+t)$ and we reuse \eqref{eq:runningoutofnames} to estimate 
\begin{align}\label{eq:newtails}
\int_{-1}^0 \int_{B_\frac{1}{4}} |(\theta_0)^\#_{\alpha, q}|^{1 + \frac{1}{q^2-1}} \, dz \, ds
&\leq \int_{\mathcal{Q}(4)}|\theta^\#_{\alpha, q}|^{1 + \frac{1}{q^2-1}} \, dz \, ds \,.
\end{align}
Combining \eqref{eq:Step1}, \eqref{eq:newdiffquant} and \eqref{eq:newtails}, we reach \eqref{eq:smallnessreq} by requiring \eqref{eq:hypfixedscale}\,.
\end{proof}
}
}

\section{The singular set and proof of Theorem \ref{thm:main}}\label{sec:dimension}

We recall the box-counting dimension of a (compact) set $\mathcal S \subseteq \R^3\,:$ For every $\delta \in (0,1)$ we denote by $N(\delta)$ the minimal number of sets of diameter $\delta$ needed to cover $\mathcal S\,.$ We then define
$$\dim_b(\mathcal{S}) := \limsup_{\delta \to 0} - \log_\delta ( N(\delta)) \, .$$
It is well-known that the box-counting dimension controls the Hausdorff dimension $\dim_\mathcal{H}\,,$ i.e. $\dim_\mathcal{H} \mathcal{S} \leq \dim_b \mathcal{S} \, .$

\begin{proof}[Proof of Theorem \ref{thm:main}] \textit{Step 1: Fix $t>0$ and define $\mathcal{S}:= {\rm Sing} \, \theta \cap \left[\R^2 \times [t, \infty) \right]\,.$ We claim that $\mathcal{S} $ is a compact set in spacetime. }\\
From the definition it is clear that $\mathcal{S}$ is closed and we claim that it is also bounded. Indeed, let $p$ be as in Corollary \ref{thm:Scheffer} and let $p' \geq p$. From the maximal function estimate and Calderon-Zygmund 
\begin{align}\label{eqn:torecall}
\int_t^\infty \int (\mathcal{M} \theta^2)^\frac{p'}{2} + \lvert u \rvert^{p'} \, dx \, ds &\lesssim \int_t^\infty \int \lvert \theta \rvert^{p'} \, dx \, ds \lesssim \norm{\theta}_{L^\infty(\R^2 \times [t, \infty))}^{p'-2(1+\alpha)} \norm{\theta}_{L^{2(1+\alpha)}(\R^2 \times [t, \infty))}^{2(1+\alpha)} 
\\&\leq C(\|\theta_0\|_{L^2}) t^{- \frac{p'-2(1+\alpha)}{2\alpha}} \,,
\end{align}
by Theorem \ref{thm:LerayConstWu} and the global energy inequality \eqref{e:g_energy_1} of Leray-Hopf weak solutions. By the absolute continuity of the integral we deduce that for every $\varepsilon>0$ there exists $M=M(\theta, \varepsilon)>0$ and $T^*=T^*(\|\theta_0\|_{L^2}, \varepsilon)>0$ such that
\begin{equation}
\int_t^\infty \int_{\R^2 \setminus B_M}(\mathcal{M} \theta^2)^\frac{p}{2} + \lvert u \rvert^p \, dx \, ds  +\int_{T^*}^\infty \int_{\R^2}(\mathcal{M} \theta^2)^\frac{p}{2} + \lvert u \rvert^p \, dx \, ds  < \varepsilon \, ,
\end{equation}
which, by choosing $\varepsilon$ as in Corollary \ref{thm:Scheffer}, means that $\mathcal{S} \subset B_{M+4}(0) \times [0, T^*+4^{2\alpha}] \, .$

{\new
\textit{Step 2:  Let $\alpha \in (\alpha_0, \frac{1}{2})$ (otherwise the dimension estimate is trivial by Remark \ref{rem:alpha0}). We show that for every $q\geq 8$ we have 
\begin{equation}\label{eq:betaq}
\dim_{b} \mathcal S \leq \frac{1}{2\alpha - \frac{2}{q}} \left(\left( \frac{1+\alpha}{\alpha} + \frac{1}{q}\right)(1-2\alpha) +2  \right)=: \beta (q)  \, .
\end{equation}}
Indeed, fix $q \geq 8$ and define $p_q:= \frac{1+\alpha}{\alpha} + \frac{1}{q} \, .$  From Corollary \ref{prop:fixedscale}, we know that if  $(x,s) \in \mathcal{S}\,,$ then for every $r \in (0, (t/2)^\frac{1}{2\alpha})$ it holds
\begin{equation}
 \frac{1}{r^{p_q(1-2\alpha)+2}} \int_{\mathcal{C}^*(x,s; r)} y^b \lvert\overline{\nabla} \theta^* \rvert^2 \, dz \, dy \, d\tau + \int_{\mathcal C(x,s;r)} \lvert \theta^\#_{\alpha, q} \rvert^{1+\frac{1}{q^2-1}} \, dz \, d \tau  > \varepsilon \norm{\theta}_{L^\infty( \R^2 \times [t/2, \infty))}^{-(p-2)} =: 2\varepsilon_3 \,,
\end{equation}
where $\varepsilon=\varepsilon(\alpha)>0$ is universal and in particular independent of $r$. By Theorem \ref{thm:LerayConstWu}, the threshold $\varepsilon_3$ depends on $t>0$ and $\norm{\theta_0}_{L^2}$ only.
 Following Remark \ref{rem:upperboundKq}, we observe that with the notation of Remark \ref{rem:upperboundKq}
\begin{equation}\label{eq:Lq}
K_q(u; x,s,r) \leq \max\left  \{ 2C_2 \norm{\theta_0}_{L^2} t^{-\frac{1}{2\alpha}(1-2/q)}, 1 \right\}=: L_q
\end{equation}
 for $r \in (0, \delta_0] \, ,$ where
\begin{equation}
\delta_0:= \min \left \{ (t/2)^\frac{1}{2\alpha}, (L_q/2)^{1/(1-2\alpha+2/q)} ,1 \right \} \, .
\end{equation}
Observe that $L_q$ depends only on $\norm{\theta_0}_{L^2}$ and $t>0\,.$ For $\delta \in (0, \delta_0)\,,$ we define the collection $\Gamma_\delta$ containing balls $ B_{\sqrt{2} L_q \delta^{2\alpha-2/q}}(x,s)$ centered at some point $(x,s) \in \R^2 \times [t, \infty)$ satisfying 
$$\int_{ \mathcal{C}^*(x,s;\delta)} y^b \lvert\overline{\nabla} \theta^* \rvert^2 \, dz \, dy \, d\tau + \int_{\mathcal{C}(x,s;\delta)} \lvert \theta^\#_{\alpha, q} \rvert^{1 + \frac{1}{q^2-1}} \, dz \, d\tau \geq \varepsilon_3\, \delta^{p_q(1-2\alpha)+2} \,.$$
Observe that $\{\Gamma_\delta \}_\delta$ is a family of coverings of $\mathcal{S} \,$ consisting of Euclidean balls in spacetime. By the Vitali covering Lemma, there exists for every $\delta$  a countable, disjoint family $\{ B^i  \}_{i \in I}$ such that 
$$\mathcal{S} \subseteq \bigcup_{ i \in I } 5B^i \,$$
and $B^i \in \Gamma_\delta\,,$ in particular $B^i=B_{\sqrt{2} L_q \delta^{2\alpha- 2/q}}(x_i,s_i)$  for some $(x_i, s_i) \in \R^2 \times [t, \infty) \,.$ Observe that by Lemma \ref{lem:sharpmax}, Theorem \ref{thm:CF} and the global energy inequality \eqref{e:g_energy_1} for Leray-Hopf weak solutions, we have the global control 
\begin{equation}
\int_0^\infty \int_{\R^3_+} y^b \lvert \overline{\nabla} \theta^\ast \rvert^2 \, dz \, dy \, ds + \int_0^\infty \int_{\R^2} \lvert \theta^\#_{\alpha, q} \rvert^{1+ \frac{1}{q^2-1}} \, dz \, ds \leq C \int_0^\infty  [\theta(s)]_{W^{\alpha,2}(\R^2)}^2 \, ds \leq C \norm{\theta_0}_{L^2}^2 \, .
\end{equation}
Therefore, setting $\eta:= 2\sqrt{2} L_q \delta^{2\alpha - 2/q}$ and using the disjointness of $\{B^i\}_{i \in I}\,,$ we can estimate the minimal number $N(\eta)$ of sets of diameter $\eta$ needed to cover $\mathcal{S}$ by
\begin{equation}
N(\eta) \leq \mathcal{H}^0(I) \leq \frac{C \norm{\theta_0}_{L^2}^2}{\varepsilon_3 \delta^{p_q(1-2\alpha) +2}} = \frac{C \norm{\theta_0}_{L^2}^2(\sqrt 2 L_q)^{\frac{p_q(1-2\alpha) +2}{2\alpha - 2/q}}}{\varepsilon_3 \eta^{\frac{p_q(1-2\alpha) +2}{2\alpha - 2/q}}} \,.
\end{equation}
We conclude that 
\begin{equation}
\dim_{b} \mathcal{S} =\limsup_{\eta \to 0} -\log_{\eta} N(\eta) \leq  \frac{p_q(1-2\alpha) +2}{2\alpha - 2/q} \, .
\end{equation}

\textit{Step 3: Conclusion.\\}
By taking the limit $q \to \infty\,$ in Step 2, we conclude that
 $$\dim_{b}({\rm Sing} \, \theta \cap [t, \infty)) \leq \frac{1}{2\alpha} \left( \frac{1+\alpha}{\alpha} (1-2\alpha) + 2\right) \, $$ 
  for every $t>0$.
Writing ${\rm Sing} \, \theta = \bigcup_{n \geq 1} {\rm Sing} \, \theta \cap \big[\R^2 \times \big[\frac{1}{n}, \infty\big)\big]\,,$ we deduce that 
$$\dim_\mathcal{H}({\rm Sing}\, \theta) \leq \sup_{n \geq 1} \, \dim_\mathcal{H}\left({\rm Sing} \, \theta \cap \big[\R^2 \times \big[\tfrac 1n, \infty\big)\big] \right) \leq \frac{1}{2\alpha} \left( \frac{1+\alpha}{\alpha} (1-2\alpha) + 2\right) \, .\qedhere$$
}
\end{proof}

\section{Stability of the singular set}\label{sec:stability}
This section is devoted to the proof of Corollary \ref{cor:stability}. 
We observe that the $\varepsilon$-regularity criterion is ``continuous" under strong $L^p$-convergence. This convergence result together with the observation that smooth solutions satisfy the $\varepsilon$-regularity criterion of Theorem \ref{prop:ereg1} will allow to deduce the required stability.

\begin{lemma}\label{lem:continuity} Let $\alpha_n \in (\frac{1}{4}, \frac{1}{2})$ be such that $\alpha_n \rightarrow \alpha \in (\frac{1}{4}, \frac{1}{2}]$ and consider a sequence of suitable weak solutions $\theta_n$ to \eqref{eq:SQG}--\eqref{eq:u} with $\alpha= \alpha_n$ on $\R^2 \times [-2, 0]$ such that $\theta_n \rightarrow \theta$ strongly in $L^p( \R^2 \times [-2, 0])\,$ and assume that $\theta$ is a classical solution to \eqref{eq:SQG}--\eqref{eq:u} on $ \R^2 \times [-2, 0] $. Then, there exists a universal constant $C>0$ such that uniformly for any $(x,t) \in Q_1$ 
\begin{equation}
\lim_{n \to \infty} E(\theta_{n,0}, u_{n,0}; \frac{1}{4} ) \leq C E(\theta_0, u_0; \frac{1}{4})
\end{equation}
where we denote by $\theta_{n,0}$ and $u_{n,0}$ (and $\theta_0$ and $u_0$ respectively) the change of variables of Lemma \ref{lem:changeofvar} as applied to $\theta_n$ (and $\theta$ respectively).
\end{lemma}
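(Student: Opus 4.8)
The plan is to treat the three pieces $E^S$, $E^V$, $E^{NL}$ of the excess separately, exploiting that each is a \emph{seminorm} in its argument (for a fixed centering cylinder), and to pass to the limit by a triangle-inequality argument. Two preliminary ingredients are needed: convergence of the velocities and of the flows. Since the Riesz transform is bounded on $L^p(\R^2)$, the hypothesis $\theta_n \to \theta$ in $L^p(\R^2 \times [-2,0])$ gives $u_n = \mathcal{R}^\perp\theta_n \to u = \mathcal{R}^\perp\theta$ in $L^p(\R^2 \times [-2,0])$; in particular $\{\theta_n\}$ and $\{u_n\}$ are precompact in $L^p$, so translations act \emph{equicontinuously} on them, a fact I will use repeatedly.

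First I would establish that the flows $x_0^n$ of Lemma \ref{lem:changeofvar} (applied to $\theta_n$) converge to $x_0$, uniformly on $[-1,0]$ and uniformly in $(x,t)\in Q_1$. Writing the defining ODE \eqref{eq:ODE} as $\dot x_0^n(s) = V_n(x_0^n(s),s)$ with $V_n(\xi,s):=\mean{B_{1/4}} u_n(y+\xi+x,s+t)\,dy$ and analogously $V$ for $u$, one has, for $\xi\in B_{3/4}$ and $x\in B_1$, the bound $|V_n(\xi,s)-V(\xi,s)|\le C\norm{(u_n-u)(\cdot,s+t)}_{L^p(B_2)}$, so that $g_n(s):=\sup_\xi|V_n-V|\to 0$ in $L^1_s$, uniformly in $(x,t)$. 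As in the proof of Lemma \ref{lem:changeofvar} (John--Nirenberg together with the $L^\infty_t BMO_x$ bound of Theorem \ref{thm:LerayConstWu}, using that $\theta$ is classical), the limiting field $V$ is log-Lipschitz in $\xi$ with a modulus $\omega$ uniform in $(x,t)$. Then $\delta(s):=|x_0^n(s)-x_0(s)|$ satisfies $\dot\delta\le g_n(s)+\omega(\delta)$ with $\delta(0)=0$, and Osgood's comparison lemma yields $\sup_{s\in[-1,0]}\delta(s)\to 0$ uniformly. Note I only use the log-Lipschitz regularity of the \emph{limit} $V$, so no uniform $BMO$ bound on the $u_n$ is required.

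With these in hand I would pass to the limit in each piece. Fixing $(x,t)\in Q_1$, I introduce the intermediate fields $\tilde\theta_{n,0}(z,s):=\theta_n(z+x_0(s)+x,s+t)$ and $\tilde u_{n,0}(z,s):=u_n(z+x_0(s)+x,s+t)$, which use the \emph{limiting} flow. For each piece $E^\bullet$ the triangle inequality for the corresponding $\alpha_n$-seminorm gives $E^\bullet(\theta_{n,0})\le E^\bullet(\tilde\theta_{n,0})+E^\bullet(\theta_{n,0}-\tilde\theta_{n,0})$. The error term is controlled by $\norm{\theta_{n,0}-\tilde\theta_{n,0}}_{L^p}=\norm{\theta_n(\cdot+x_0^n)-\theta_n(\cdot+x_0)}_{L^p}\to 0$, by translation equicontinuity of the precompact family $\{\theta_n\}$ together with $\sup_s|x_0^n(s)-x_0(s)|\to 0$; here one uses the elementary domination $E^S(\phi),E^V(\phi),E^{NL}(\phi)\le C\norm{\phi}_{L^p(\R^2\times[-2,0])}$, the last of which follows from H\"older and the summable weight $(r/R)^{\sigma p}$ in the tail. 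Since $\tilde\theta_{n,0},\tilde u_{n,0}$ differ from $\theta_0,u_0$ only by the ($L^p$-isometric) shift of $\theta_n-\theta$, one has $\tilde\theta_{n,0}\to\theta_0$ and $\tilde u_{n,0}\to u_0$ in $L^p$; combined with the convergence of the $\alpha_n$-cylinders to the $\alpha$-cylinder (absolute continuity of the integral, $\alpha_n\to\alpha$), this gives $E^\bullet(\tilde\theta_{n,0})\to E^\bullet(\theta_0)$, and likewise for $u$ — where the spatially constant $\dot x_0^n(s)$ in $u_{n,0}$ simply drops out of the oscillation $E^V$. Summing the three pieces yields $\limsup_n E(\theta_{n,0},u_{n,0};\tfrac14)\le E(\theta_0,u_0;\tfrac14)$, which implies the claim for any $C\ge1$, uniformly in $(x,t)$ since every error bound above is controlled by a global $L^p$ quantity.

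The hard part will be the nonlocal piece $E^{NL}$ interacting with the time-dependent flow shift: its supremum over $R$ reaches all of $\R^2$, so the flow error cannot be absorbed by a purely local estimate. This is exactly where strong $L^p$ convergence (as opposed to the weak convergence used in the compactness Lemma \ref{lem:compactness}) is indispensable, since it furnishes the precompactness behind the equicontinuity of translations, and where the $L^p$-domination of the tail seminorm (through the summable weight) converts the uniform smallness of the shift into smallness of $E^{NL}(\theta_{n,0}-\tilde\theta_{n,0})$.
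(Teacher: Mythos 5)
Your proposal is correct and follows the same overall strategy as the paper's proof: first show that the flows $x_0^n\to x_0$ uniformly on $[-1,0]$ (and uniformly in $(x,t)\in Q_1$) via an ODE stability estimate, then upgrade the strong $L^p$ convergence of $\theta_n$ to strong $L^p$ convergence of the composed fields, and finally pass to the limit in each piece of the excess using that $E^S$, $E^V$, $E^{NL}$ are seminorms dominated by the global $L^p$ norm (with the spatially constant $\dot x_0^n(s)$ dropping out of $E^V$). Two implementation differences are worth recording. For the flow stability the paper uses that $u$ is locally Lipschitz in space --- available because $\theta$ is classical --- and closes with Gr\"onwall, whereas you use only the log-Lipschitz modulus coming from the $BMO$ bound and close with Osgood; this is a weaker input that yields the same conclusion here. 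For the error created by replacing $x_0^n$ by $x_0$, the paper puts the flow difference on the limit, writing $\theta_n(\cdot+x_0^n)-\theta(\cdot+x_0)=[\theta_n-\theta](\cdot+x_0^n)+[\theta(\cdot+x_0^n)-\theta(\cdot+x_0)]$ and using the Lipschitz regularity of $\theta$ on a large ball together with absolute continuity of the integral for the far field; you instead put it on $\theta_n$ and invoke uniform continuity of translations on the precompact family $\{\theta_n\}$. The latter is valid, but the standard Kolmogorov--Riesz equicontinuity statement concerns a \emph{fixed} translation vector, while your shift $x_0^n(s)-x_0(s)$ depends on time, so a word of justification is needed: approximate each element of a finite $\varepsilon$-net by a uniformly continuous compactly supported function, for which time-dependent spatial shifts of size at most $\delta$ are harmless, and use that such shifts preserve the $L^p(\R^2\times I)$ norm. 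With that remark supplied, both routes deliver the conclusion; your seminorm triangle inequality even gives the constant $1$ up to the $\alpha_n\to\alpha$ cylinder correction, which you correctly flag and which the paper leaves implicit.
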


\begin{proof} We fix $(x,t) \in Q_1$ and apply the change of variables of Lemma \ref{lem:changeofvar} to $\theta_n$ and $\theta$ respectively. We denote the corresponding flow by $x_{n,0}$ and $x_0$. 
%
Moreover, we estimate for $s\in [-1,0]$ 
\begin{align}
| x_{n,0}(s) - & x_0(s) | \leq \lvert s \rvert^{1-\frac{1}{p}} \left( \int_{s}^0 \mean{B_\frac{1}{4}(x+x_{n,0}(\sigma))} \lvert u_n(y, \sigma+t)- u(y, \sigma+t) \rvert^p  \, dy \, d\sigma \right)^\frac{1}{p}\\ 
&+ \int_{s}^0 \mean{B_\frac{1}{4}(x)} \lvert u(y+x_{n,0}(\sigma), \sigma+t) - u(y+ x_0(\sigma), \sigma+t) \rvert\, dy \,d\sigma\\
&\leq 4^{\frac{2}{p}}  \lvert s \rvert^{1-\frac{1}{p}} \left(\int_{t-1}^t \int \lvert u_n-u \rvert^p  \, dy \, d\sigma \right)^\frac{1}{p} +  \sup_{\sigma \in [t-1,t]}[u(\sigma)]_{\rm Lip(\R^2)}\int_{s}^0 \lvert x_{n,0}(\sigma)- x_{0}(\sigma) \rvert \, d\sigma \,.
\end{align}
By Calderon-Zygmund, the strong convergence of $\theta_n$ implies that $u_n \rightarrow u$ strongly $L^p$.  Calling $C= \sup_{\sigma \in [t-1,t]}  [u(\sigma)]_{\rm Lip(\R^2)}$ we deduce by Gr\"onwall's inequality that uniformly in $s \in [-1,0]$ 
\begin{equation}\label{eq:gronwall}
\lim_{n\to \infty}\lvert x_{n,0}(s)-x_0(s) \rvert \leq \lim_{n\to \infty} 4^{\frac{2}{p}} 
(1+Ce^C)\left(\int_{t-1}^t \int \lvert u_n-u \rvert^p \, dy \, d\sigma \right)^\frac{1}{p}
=0\,.
\end{equation}
We now claim that $\theta_{n,0} \rightarrow \theta_0$ 
 strongly in $L^p(\R^2 \times [-(1/4)^{2\alpha}, 0])\,.$ Fix $\varepsilon >0 \, .$
We split as before
\begin{align}
\int_{-\left( \frac 14\right)^{2\alpha}}^0 \int \lvert \theta_{n,0}- \theta_0 \rvert^p (y,s) \, dy \, ds &\leq \int_{-\left( \frac 14\right)^{2\alpha}}^0 \int \lvert \theta_{n}(x_{n,0}(s)+y, s+t) - \theta(x_{n,0}(s)+y, s+t ) \rvert^p \, dy \, ds \\
&+ \int_{-\left( \frac 14\right)^{2\alpha}}^0 \int \lvert \theta(x_{n,0}(s)+y, s+t) - \theta(x_0(s)+y, s+t) \rvert^p \, dy \, ds \,.
\end{align}
Using the strong convergence of $\theta_n$ in $L^p(\R^2 \times [-2, 0])$, the first term on the right-hand side doesn't exceed $\frac \varepsilon 3$ for $n$ big enough. Using \eqref{eq:gronwall} and absolute continuity, there exists $R\geq 1$ such that for all $n$ big enough
\begin{equation}
\int_{-\left( \frac 14\right)^{2\alpha}}^0 \int_{\lvert y \rvert \geq R} \lvert \theta(x_{n,0}(s)+y, s+t) - \theta(x_0(s)+y, s+t) \rvert^p \, dy \, ds \leq \frac \varepsilon 3\,.
\end{equation}
By the regularity of $\theta$ and \eqref{eq:gronwall}, we estimate the remaining contribution of the integral over the set $\{\lvert y \rvert \leq R\}$ by $ [\theta]_{{\rm Lip}(\R^2 \times [t- (1/4)^{2\alpha}), t])}^p\norm{u_n-u}_{L^p(\R^2 \times [t-1,t])}^p  R^p\,,$ which by the strong $L^p$-convergence does not exceed $\frac \varepsilon 3$ for $n$ big enough.
The strong convergence of $\theta_{n,0} \rightarrow \theta_0$ in $L^p(\R^2 \times [-(1/4)^{2\alpha}, 0])$ implies also that  $u_{n,0} \rightarrow u_0$ strongly in $L^p(\R^2 \times [-(1/4)^{2\alpha}, 0])\,.$  
As an immediate consequence we obtain
\begin{equation}
\lim_{n \to \infty} E^S(\theta_{n,0}; \frac{1}{4}) + \lim_{n \to \infty} E^V(u_{n,0}; \frac{1}{4})= E^S(\theta_0; \frac{1}{4}) + E^V(u_0; \frac{1}{4})\,.
\end{equation}
and
\begin{align}
 \lim_{n \to \infty} E^{NL}(\theta_{n,0}; \frac{1}{4}) &\lesssim  \lim_{n \to \infty}  \Bigg\{ E^{NL}(\theta_0; \frac{1}{4}) + \left( \mean{-(1/4)^{2\alpha}}^0 \sup_{R \geq \frac{1}{4}} \frac{1}{(4R)^{\sigma p}} \mean{B_R} \rvert \theta_{n,0}- \theta_0 \rvert^p \right)^\frac{1}{p}  \\
  & \hspace{5em} + \left( \mean{-(1/4)^{2\alpha}}^0 \lvert [\theta_0]_{B_\frac{1}{4}}- [\theta_{n,0}]_{B_\frac{1}{4}} \rvert^p \right)^\frac{1}{p} \Bigg\}
 \\
 &= E^{NL}(\theta_0; \frac{1}{4})\,.
\end{align}
\end{proof}

\begin{proof}[Proof of  Corollary \ref{cor:stability}] We argue by contradiction and we let $p:=\frac{6}{4\alpha-1}$ and $\sigma := \frac{4\alpha}3+\frac16$ as in the proof of Corollary \ref{thm:Scheffer}. We may assume that there is a sequence of orders $\alpha_n \in (\frac{1}{4}, \frac{1}{2})$ and initial data $\bar \theta_{n,0} \in H^2$ such that
\begin{itemize}
\item $\lim_{n \to \infty} \alpha_n = \frac{1}{2} \,,$
\item $\norm{\bar \theta_{n, 0}}_{H^2(\R^2)} \leq R$ for all $n\geq 1\,,$
\item the local smooth solutions $\theta_n$ to \eqref{eq:SQG}--\eqref{eq:u} with $\alpha= \alpha_n$ and initial data $\bar \theta_{n,0}$, which exist on an interval $[0, T_1]$, with $T_1 = C \norm{\bar \theta_{n,0}}_{H^2}^{-2} \geq C R^{-2}\,$  bounded from below uniformly in $n$ (see \cite{CotiZelatiVicol2016}), blow-up in finite time.
\end{itemize}
{\new Since $ \theta_n \in L^\infty([0, T_1], H^2)$ implies, for instance, that  $\nabla \theta_n \in L^2([0, T_1], L^{4/\alpha})\,,$ $\theta_n$ satisifes the weak-strong uniqueness criterion of \cite{DongChen2006}\footnote{\new The authors of \cite{DongChen2006} state their result in the form of an asymptotic stability result with respect to perturbations of the initial datum and the right-hand side; however, in absence of any perturbation their energy yield the corresponding weak-strong uniqueness statement, as previously observed for instance in \cite{LiuJiaDong2012}.}} and hence $\theta_n$ coincides on $[0,T_1]$  with the unique suitable weak solution to \eqref{eq:SQG}--\eqref{eq:u} with $\alpha= \alpha_n$ and initial data $\bar \theta_{n,0}$. We have the uniform bound
\begin{equation}\label{eq:unifbound}
\sup_{n \geq 1} \norm{\theta_n}_{L^\infty(\R^2 \times [T_1/2, \infty))} + \sup_{n \geq 1}\norm{\theta_n}_{L^\infty([T_1/2, \infty), L^2(\R^2))} \leq C 
\end{equation}
given by \eqref{e:g_energy_1} and Theorem \ref{thm:LerayConstWu}\,. 
Up to subsequence, we have that $\bar \theta_{n,0} \rightarrow \bar \theta_0 \, $ strongly in $L^2(\R^2) \,.$ Fix $T>0\,.$ By \eqref{eq:unifbound}, $\theta_n$ is uniformly bounded in $L^2(\R^2 \times [T_1/2, T])$ and hence $\theta_n \rightharpoonup \theta$ weakly in $L^2\,.$ The strong convergence in $L^p(\R^2 \times [T_1/2,T])$ is established as in the proof of Lemma \ref{lem:compactness}, Step 2, using an Aubin-Lions type argument. The strong convergence for any $T>0$ is enough to pass to the limit the equation as well as the global energy inequalities \eqref{e:g_energy_1}--\eqref{e:g_energy_2} and hence $\theta$ is a Leray--Hopf solution to \eqref{eq:SQG}--\eqref{eq:u} with $\alpha= \frac 12$ and initial datum $\bar \theta_0$. This Leray--Hopf solution is smooth by \cite{CaffarelliVasseur2010}. 
The blow-up of the strong solutions means that there exists $(x_n, t_n) \in {\rm Sing} \, \theta_n\,$ for $n\geq 1\,.$ 
By Theorem~\ref{thm:main} (and more precisely, noticing that Step 1 in its proof can be made uniform in $n$), 
there exists $M>0$ such that for all $n\geq 1$ 
$$(x_n, t_n ) \in \overline B_M \times [T_1, M] \,.$$
Up to subsequence, we may assume that $(x_n, t_n) \rightarrow (\bar x, \bar t) \in \overline B_M \times [T_1, M]\,.$
Rescaling the sequence of solutions by one single factor (related to $T_1$) and translating them, we may assume that $T_1 =- 2$ and that $(\bar x, \bar t)=(0,0) \in Q_1\,.$ By a further $n$-dependent temporal translation (by $2t_n\to 0$), we may assume that $t_n<0$. By the continuity of translations in $L^p$, we still have in this way that $\theta_n \to \theta$ strongly in  $L^p(\R^2 \times [-2, 0])$.
We claim that there exists $r \in (0, 1]$ such that for any $(x,t) \in Q_1$ 
\begin{equation}\label{eq:smoothsolexcess}
E(\theta_{r,0}, u_{r,0}; \frac{1}{4}) < \frac{\varepsilon_2}{2} \,,
\end{equation}
where we denote by $\theta_{r,0}$ and $u_{r,0}$ the change of variables of Lemma \ref{lem:changeofvar} applied to $(\theta_r, u_r)\,$ (see Remark \ref{rem:changeofvar}) and  by $\varepsilon_2$ the constant from Theorem \ref{prop:ereg1}. This now gives rise to a contradiction: Indeed, by Lemma \ref{lem:continuity}, uniformly for every $(x,t) \in Q_1$, we have the lower semicontinuity $\lim_{n \to \infty} E((\theta_{n})_{r,0},( u_n)_{r,0}; \frac{1}{4}) \leq C E(\theta_{r,0}, u_{r,0}; \frac{1}{4}) $ such that for every $n $ big enough, the smallness requirement of Theorem \ref{prop:ereg1} holds and we deduce that $\theta_{n,r}$ is smooth in $Q_{1/2}$, namely $\theta_n$ is smooth in $Q_{r/2}\,.$ This contradicts the fact that the singular points $(x_n, t_n) \in Q_{r/2}$ for $n$ big enough. We are thus left to prove \eqref{eq:smoothsolexcess}. Indeed, 
\begin{align}
E^S(\theta_{r,0}; \frac{1}{4}) &= r^{2\alpha-1}\left( \mean{r^{2\alpha}t-\left(\frac{r}{4}\right)^{2\alpha}}^{r^{2\alpha}t} \mean{B_\frac{r}{4}(rx+rx_0(r^{2\alpha}(s-t))} \lvert \theta(y, s) - r^{1-2\alpha} (\theta_{r,0})_{Q_\frac{1}{4}}\rvert^p \, dy \,ds \right)^\frac{1}{p}\\
&\leq r^{4\alpha-1} [\theta]_{{\rm Lip(\R^2 \times [-(2r)^{2\alpha},0])}}\,,
\end{align}
where we used again, as in the proof of Lemma \ref{lem:continuity}, the fact that the flow $x_{0}$ is Lipschitz for a regular solution $\theta$. Similarly, 
$E^V(u_{r,0}; \frac{1}{4}) 
\leq  \, r^{2\alpha} [u]_{L^\infty ( [-(2r)^{2\alpha},0]; \rm Lip(\R^2))}\,.$
Finally for the non-local part of the excess, we rewrite
\begin{align}
\, &E^{NL}(\theta_{r,0}; \frac{1}{4}) \\
&= r^{2\alpha-1} \left( \mean{- \left(\frac{r}{4}\right)^{2\alpha}}^{0} \sup_{R \geq \frac{r}{16}} \left(\frac{r}{4R}\right)^{\sigma p} \left(\mean{B_R(x)} \lvert \theta(y+x_0(r^{2\alpha}s), s+r^{2\alpha}t)  - r^{1-2\alpha} [\theta_{r,0}]_{B_\frac{1}{4}} \rvert^\frac{3}{2} \, dy  \right)^\frac{2p}{3} \, ds \right)^\frac{1}{p} \,.
\end{align} 
For fixed time $s\in [-(r/4)^{2\alpha}, 0]\,,$ we estimate the supremum splitting it on the two sets $\{\frac{1}{4} \geq R \geq \frac{r}{16} \}$ and $\{ R \geq \frac{1}{4} \}$. We get that
\begin{equation*}
E^{NL}(\theta_{0, r}; \frac{1}{4}) \leq C 
r^{\sigma p}([\theta]_{{\rm Lip}(\R^2 \times [-(2r)^{2\alpha}, 0])}+\norm{\theta}_{L^\infty(\R^2 \times [-(2r)^{2\alpha}, 0])})\,.\qedhere
\end{equation*}
\end{proof}

\appendix 
\section{Local spacetime regularity of the fractional heat equation}

\begin{lemma}\label{lem:linearizedeq} Let $\alpha \in (0, 1)$ and $p\geq 2$. Consider $\eta \in L^{3/2}_{loc}(\R^2 \times [-1,0])$ with $(\eta)_{Q_1}=0$ and $E^S(\eta;1)+E^{NL}(\eta; 1)< +\infty$ which solves $\partial_t \eta + (-\Delta)^\alpha \eta = 0$ in $Q_{3/4}$. Then $\eta$ is smooth with respect to the space variable and $\eta \in C^{1-1/p}(Q_{1/2})$ in spacetime. Moreover, there exists $\bar C>1$ such that
\begin{equation}
\norm{\eta}_{L^\infty([-(1/2)^{2\alpha}, 0], C^1(B_\frac{1}{2}))} +\norm{\eta}_{C^{1-\frac{1}{p}}(Q_\frac{1}{2})} \leq \bar C ( E^S(\eta;1) + E^{NL}(\eta;1)) \, .
\end{equation} 
The constant $\bar C$ can be chosen uniform in $\alpha$ as long as $\alpha$ is bounded away from $0$.
\end{lemma}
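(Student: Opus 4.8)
The strategy is to exploit that the equation is \emph{linear} and that the fractional heat semigroup $S(\tau):=e^{-\tau(-\Delta)^\alpha}$, i.e. convolution with the kernel $p_\tau(x):=\tau^{-n/(2\alpha)}p_1(\tau^{-1/(2\alpha)}x)$ (with $p_1\in C^\infty$, $0\le p_1(x)\lesssim (1+|x|)^{-(n+2\alpha)}$, here $n=2$), is strongly smoothing. Since $\eta$ solves the homogeneous equation only on $Q_{3/4}$, I would first localize in space: fix $\chi\in C^\infty_c(B_{5/8})$ with $\chi\equiv1$ on $B_{9/16}$ and set $v:=\eta\chi$. Using $\partial_t\eta=-(-\Delta)^\alpha\eta$ on $\supp\chi$ one checks that $v$ solves, globally in space and for $s\in(-(3/4)^{2\alpha},0]$,
\[
\partial_t v+(-\Delta)^\alpha v=F,\qquad F:=(-\Delta)^\alpha(\chi\eta)-\chi(-\Delta)^\alpha\eta=c_{n,\alpha}\int \frac{\eta(w)\,(\chi(\cdot)-\chi(w))}{|\cdot-w|^{n+2\alpha}}\,dw .
\]
The point of this reduction is that the commutator $F$ is a genuine function (it gains $1-2\alpha$ derivatives off the regularity of $\eta$), that on $B_{1/2}$ it reduces to the pure far field $c_{n,\alpha}\int_{|w|\ge 9/16}\eta(w)\,|\cdot-w|^{-(n+2\alpha)}\,dw$, and that, after a dyadic decomposition in $w$ and subtracting the harmless constant $[\eta(s)]_{B_1}$ (which $(-\Delta)^\alpha$ annihilates and whose size is controlled in $L^p_t$ by $E^S$ since $(\eta)_{Q_1}=0$), all the resulting $w$-integrals converge and are bounded by $E^S(\eta;1)+E^{NL}(\eta;1)$. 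Convergence of these tails is exactly where the decay $\sigma<2\alpha$ built into $E^{NL}$ is used, since it forces $|\eta(w,s)-[\eta(s)]_{B_1}|$ to grow at most like $|w|^{\sigma}$ with $\sigma<2\alpha$.

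Next I would establish interior spatial smoothness from the Duhamel formula
\[
v(t)=S(t-t_1)\,v(t_1)+\int_{t_1}^{t}S(t-s)\,F(s)\,ds,\qquad -(3/4)^{2\alpha}<t_1<t,
\]
differentiating under the integral in $x$ and averaging the auxiliary time $t_1$ over a strip below $t$ (to convert single time slices of $\eta$ into the spacetime quantities $E^S,E^{NL}$). The initial-data term is evaluated at $t-t_1$ bounded away from $0$, hence is $C^\infty$ in $x$, uniformly in $t$, with bound $\lesssim E^S+E^{NL}$. The forcing term is the delicate one: since $\|\partial_x^\beta p_\tau\|_{L^1}\sim \tau^{-|\beta|/(2\alpha)}$ is non-integrable in $s$ near $s=t$ as soon as $|\beta|\ge 2\alpha$, one cannot place a full derivative on the kernel in a single step — this is the genuine obstruction created by the low dissipation order $\alpha<\tfrac12$. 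I would resolve it by a bootstrap in spatial-regularity increments strictly below $2\alpha$: at each step the gained H\"older regularity of $\eta$ improves, through the commutator, the regularity of $F$, which in turn allows gaining a further $<2\alpha$ derivatives; after finitely many steps this reaches $C^1$ (in fact $C^\infty$) in space. The $s$-integrals at each step converge because the relevant kernel is integrable in time, and convolving in time against a kernel that lies in $L^{p'}_s$ upgrades the merely $L^p$-in-time forcing to a bound that is uniform (indeed continuous) in $t$; this is precisely what yields the \emph{uniform}-in-time estimate $\norm{\eta}_{L^\infty((-(1/2)^{2\alpha},0],C^1(B_{1/2}))}\lesssim E^S+E^{NL}$.

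Finally, for the time regularity I would return to the equation and write, for $x\in B_{1/2}$, $\partial_t\eta=-(-\Delta)^\alpha\eta$, split into a near part (the integral over $|x-z|<\tfrac1{16}$) and the far field. The near part is $L^\infty$ in time once $\eta\in C^1_x$ (so that $(-\Delta)^\alpha\eta$ is classically defined, using $2\alpha<1$), whereas the far field is controlled only in $L^p$ in time, because $E^{NL}$ is an $L^p_t$ quantity; this asymmetry is exactly what caps the time regularity. Hence $\partial_t\eta\in L^p\big((-(1/2)^{2\alpha},0],L^\infty(B_{1/2})\big)$ with norm $\lesssim E^S+E^{NL}$, and the one-dimensional embedding $W^{1,p}\hookrightarrow C^{1-1/p}$ in the time variable gives the H\"older-in-time bound with exponent $1-\tfrac1p$. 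Combined with the spatial regularity $C^1\supset C^{1-1/p}$, this yields $\eta\in C^{1-1/p}(Q_{1/2})$ in spacetime. Throughout, all constants ($c_{n,\alpha}$, the semigroup kernel norms, the geometric sums $\sum 2^{-(2\alpha-\sigma)i}$, and the factors $(2\alpha-|\beta|)^{-1}$ arising from the $s$-integrals) stay bounded as long as $\alpha$ is bounded away from $0$, giving the claimed uniformity. The main obstacle is the spatial-regularity step: making the Duhamel and commutator estimates converge despite the time singularity $\tau^{-|\beta|/(2\alpha)}$ of the kernel when $\alpha<\tfrac12$, which is what forces the bootstrap in sub-$2\alpha$ increments rather than a single application of parabolic smoothing.
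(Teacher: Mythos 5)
Your time-regularity argument is exactly the paper's: write $\partial_t\eta=-(-\Delta)^\alpha\eta$ on $B_{1/2}$, bound the near part of the fractional Laplacian by the spatial Lipschitz norm and the far part in $L^p_t$ by $E^{NL}$, and invoke $W^{1,p}(\R)\hookrightarrow C^{1-1/p}(\R)$ in time. For the spatial regularity you take a genuinely different route: the paper does not localize by a cut-off in the equation but instead uses the representation formula of Silvestre built from the Caffarelli--Silvestre extension of the fractional heat kernel, $\eta(x,t)=\int \eta(z,-(3/4)^{2\alpha})p(x-z,t+(3/4)^{2\alpha})\varphi(x-z,0)\,dz+\int\int y^b\eta^*\,\overline\Delta_b\tilde p\,$, whose error kernel $y^b\overline\Delta_b\tilde p$ is supported in an annulus $B^*_{1/8}\setminus B^*_{1/16}$ and stays bounded as $t-\tau\to 0$; all spatial derivatives then come in a single step, with the anchor-time slice controlled via a preliminary Caccioppoli estimate (which your averaging over $t_1$ replaces). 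Your localization-plus-Duhamel scheme buys a more standard semigroup framework, and your observation that the commutator $F$ reduces on $B_{1/2}$ to a pure far-field integral is the right structural input.

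The one step I would not accept as written is the bootstrap ``in sub-$2\alpha$ increments'' starting from merely $L^p$ data. To initialize it from the Duhamel forcing you would need $\|S(t-s)F(s)\|_{L^\infty_x}\lesssim (t-s)^{-1/(\alpha p)}\|F(s)\|_{L^p_x}$ integrated against $\|F\|_{L^p_s}$, which by H\"older requires $p'/(\alpha p)<1$, i.e.\ $p>1+1/\alpha$; this fails for $p=2$ and every $\alpha<1$, so the iteration cannot get off the ground by parabolic smoothing alone. The repair is already implicit in your own remarks: split $F=F\mathds{1}_{B_{9/16}}+F\mathds{1}_{B_{9/16}^c}$. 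The first piece is $C^\infty$ on $B_{9/16}$ (far-field integral, since $\chi\equiv1$ there), so $S(t-s)$ preserves its derivative bounds; for the second piece, every $x\in B_{1/2}$ sees only $|x-z|\geq 1/16$, where $|\partial_x^\beta p_{t-s}(x-z)|\lesssim (t-s)\,|x-z|^{-(2+2\alpha+|\beta|)}$, integrable in $s$ and in $z$ against the growth permitted by $E^{NL}$. With this near/far splitting you get all spatial derivatives in one step and no bootstrap is needed --- which is also, in essence, why the paper's annulus-supported error kernel works.
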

\begin{proof} Using the linearity of the equation, we can assume by a standard regularization argument that $\eta\in C^\infty(\R^2 \times (-1, 0)) $. We multiply the equation by $\eta \varphi|_{y=0}$ where $\varphi$ is a smooth  cut-off between $Q_{11/16}^*$ and $Q_{3/4}^*\,$  with $\partial_y \varphi(\cdot, 0, \cdot)=0$ and obtain, arguing as in Lemma \ref{thm:existencesuitable},
\begin{align}
\int \eta^2(x,t) \varphi (x, 0,t) \, dx  + 2 c_\alpha \int_0^t \int y^b \lvert \nabla \eta^* \rvert^2 \varphi \, dx \, dy \,ds = &\int_0^t \int  \eta^2(x,s) \partial_t \varphi (x,0,s) \, dx \, ds \\ &+ c_\alpha \int_0^t \int y^b (\eta^*)^2\overline{\Delta}_b \varphi \, dx \, dy \, ds \, .
\end{align}
Taking the supremum over $t\in [-(11/16)^{2\alpha}, 0]$ and recalling the support of $\varphi$, we obtain by Lemma \ref{lem:tail} that
\begin{align}\label{eq:energyineqheat}
\sup_{t\in [-(11/16)^{2\alpha}, 0]} \int_{B_\frac{11}{16}} \eta^2(x,t)  \, dx &\leq C \left( \int_{Q_\frac{3}{4}} \eta^2 \, dx \, ds + \int_{Q_\frac{3}{4}^*} y^b \lvert \eta^* \rvert^2  \, dx \, dy \, ds \right) \nonumber \\
&\leq  C \left(E^S(\eta;1)^2+ E^{NL}(\eta;1)^2 \right)
\, .
\end{align}
Let now $p(x, t)$ be the fractional heat kernel on $\R^2\times (0, \infty)$ with explicit form $e^{-\lvert \xi \rvert^{2\alpha}t}$ in Fourier space. By scaling invariance, $p(x,t)= t^{-1/\alpha} p(t^{-1/(2\alpha)}x,1)$ and $p$ is $C^\infty$ and bounded for $t>0\,.$ Let now $P^*(x,y):=[p(\cdot, 1)]^*(x,y)$ be the Caffarelli-Silvestre extension to $\R^{3}_+$ of $x \mapsto p(x,1)$ and observe that from scaling, $p^*(x,y,t) = t^{-1/\alpha} P^*(t^{-1/(2\alpha)}x,t^{-1/(2\alpha)}y)\,.$ Fix a cut-off $\varphi$ between $B_{1/16}^*$ and $B_{1/8}^*$ which is radially symmetric in $x$ and $y$. We define $\tilde{p}(x,y,t):= p^*(x,y,t) \varphi(x,y)\, .$ We proceed as in \cite[Proposition 4.1]{Silvestre2012} to obtain for $(x,t)\in B_{5/8}$
\begin{align}
\eta(x,t)&= \int_{B_\frac{3}{4}} \eta( z, -(3/4)^{2\alpha}) p(x-z, t+(3/4)^{2\alpha}) \varphi(x-z,0) \, dz \\
&+ \int_{-(3/4)^{2\alpha}}^t \int_{B_\frac{3}{4}^*} y^b \eta^*(x,y,\tau) \overline{\Delta}_b \tilde{p}(x-z, y,t-\tau) \,dz \, dy \, d\tau \, .
\end{align}
We argue as in \cite[Proposition 4.1]{Silvestre2012} that $y^b\overline{\Delta}_b \tilde{p}(x,y,t) = \overline{\div}(y^b \overline{\nabla} \tilde{p}(x,y,t))$ is a smooth function in $x$ and $y$ supported in $B_{1/8}^* \setminus B_{1/16}*$ which remains bounded as $t \to 0\,.$ We conclude that for any multi-index $\beta$ with $\lvert \beta \rvert \geq 0$ we have, using \eqref{eq:energyineqheat} and Lemma \ref{lem:tail}, that
\begin{align}\label{eq:spaceregheat}
\norm{\partial_x^\beta \eta}_{L^\infty(Q_\frac{5}{8})} &\lesssim \norm{\eta}_{L^\infty L^2(Q_\frac{3}{4})} + \left( \int_{Q_\frac{3}{4}^*} y^b (\eta^*)^2 \, dx \, dy \, dt \right)^\frac{1}{2} \lesssim E^S(\eta;1) + E^{NL}(\eta; 1) \,.
\end{align} 
To get the spacetime regularity, we observe that for $x\in B_{1/2}$ we can write (for fixed time)
\begin{align}
&\lvert (-\Delta)^\alpha \eta(x,t) \rvert =  \int_{\lvert y \rvert \leq \frac{5}{8}} \frac{\lvert \eta(x,t)- \eta(y,t) \rvert}{\lvert x-y \rvert^{n+2\alpha}} \,dy +   \int_{\lvert y \rvert > \frac{5}{8}} \frac{\lvert \eta(x,t)- \eta(y,t)\rvert}{\lvert x-y \rvert^{n+2\alpha}} \,dy \\
&\lesssim [\eta(t)]_{\rm Lip(B_\frac{5}{8})} + \sum_{i\geq 0} \frac{1}{2^{{ (i-1)(2\alpha-\sigma)}}} \int_{B_{2^{i+1}}\setminus B_{2^i}} \frac{\lvert \eta(x,t)- \eta(y,t) \rvert}{2^{(i-1)(n+\sigma)}} \, dy + \int_{B_1 \setminus B_\frac{5}{8}}\lvert \eta(x,t)- \eta(y,t) \rvert \, dy \\
& \lesssim  [\eta(t)]_{\rm Lip(B_\frac{5}{8})}  + \sup_{R \geq 1} \frac{1}{R^\sigma} \mean{B_R} \lvert \eta(x, t) - [\eta(t)]_1 \rvert \, dx   + \int_{B_1} \lvert \eta(x,t) \rvert \,dx \,.
\end{align}
Using \eqref{eq:spaceregheat} and the equation, we conclude
\begin{equation}
\norm{\partial_t \eta}_{L^p L^\infty(Q_\frac{1}{2})} \lesssim E^S(\eta;1) + E^{NL}(\eta;1) \,,
\end{equation}
which proves that $\eta \in C^{1-\frac{1}{p}}([-(1/2)^{2\alpha}, 0], L^\infty(B_\frac{1}{2}))$ recalling that $W^{1,p}(\R) \hookrightarrow C^{1- \frac{1}{p}}(\R) \, .$ 
\end{proof}

\section{$C^{\delta}$-H\"older continuous solutions are classical for $\delta >1-2\alpha$}
{In \cite{ConstantinWu2008} it is proved that solutions of \eqref{eq:SQG}--\eqref{eq:u}  with $u \in L^\infty([0,T], C^\delta(\R^2))$, $\delta>1-2\alpha$, are smooth. The following Lemma provides a localized version of this result.}

\begin{lemma}\label{lem:hoeldertosmooth} Let $\theta: \R^2 \times (-1, 0] \rightarrow \R$ be a bounded solution of \eqref{eq:SQG}--\eqref{eq:u}. If $\theta \in L^\infty((-1, 0], C^{\delta}(B_2))$ for some $\delta > 1-2\alpha$, then $\theta\in C^{1, \delta-(1-2\alpha)}(B_{1/2 } \times (-1/2, 0])$ in spacetime and in particular, is a classical solution.
\end{lemma}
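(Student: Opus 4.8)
\emph{Step 1 (Localizing the velocity).} First I would upgrade the hypothesis to a local Hölder bound on the velocity, namely $u=\mathcal{R}^\perp\theta\in L^\infty((-1,0],C^\delta(B_{3/2}))$. Fix a cut-off $\chi\in C^\infty_c(B_2)$ with $\chi\equiv 1$ on $B_{3/2}$ and split $\theta=\theta\chi+\theta(1-\chi)$. The compactly supported piece $\theta\chi$ lies in $C^\delta(\R^2)$ uniformly in time, so by the Schauder estimate for the Riesz transform \cite[Proposition 2.8]{Silvestre2007} one gets $\mathcal{R}^\perp(\theta\chi)\in C^\delta$. For the far piece, on $B_1$ the integrand representing $\mathcal{R}^\perp(\theta(1-\chi))$ is supported in $\{|z|\ge 3/2\}$, so the kernel is smooth there and its derivatives are integrable against the bounded function $\theta$; exactly as in Lemma \ref{lem:boundfirstder} this contributes a $C^\infty$ function on $B_1$ with all derivatives controlled by $\norm{\theta}_{L^\infty}$. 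Hence $\norm{u}_{L^\infty_t C^\delta(B_{3/2})}\lesssim \norm{\theta}_{L^\infty_t C^\delta(B_2)}+\norm{\theta}_{L^\infty}$.

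\emph{Step 2 (The Schauder gain).} The heart of the matter is a local regularity-gain estimate for $\partial_t\theta+u\cdot\nabla\theta+(-\Delta)^\alpha\theta=0$ with $u\in C^\delta_x$ incompressible and $\delta>1-2\alpha$. Setting $\gamma_0:=\delta+2\alpha-1>0$, I would prove by a Campanato-type iteration that if $\theta$ is parabolically $C^\beta$ near a point then it is $C^{\beta+\gamma_0}$ there, as long as $\beta+\gamma_0$ does not cross the integer $1$. The mechanism is precisely the change of variables of Lemma \ref{lem:changeofvar}: freezing $u$ at the center $(x_0,t_0)$ and removing the constant drift $u(x_0,t_0)$ by a Galilean shift reduces the equation, up to the source $f:=-(u-u(x_0,t_0))\cdot\nabla\theta$, to the constant-coefficient fractional heat equation, for which the interior estimate of Lemma \ref{lem:linearizedeq} (see also \cite{Silvestre2012,CaffarelliSilvestre2007}) supplies the $2\alpha$-smoothing. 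Since $|u-u(x_0,t_0)|\lesssim r^\delta$ on $B_r(x_0)$ while the natural scaling of $\nabla\theta$ at scale $r$ is $r^{\beta-1}$, the source contributes to the oscillation of $\theta$ an amount $\sim r^{2\alpha}\cdot r^{\delta+\beta-1}=r^{\beta+\gamma_0}$, which is exactly the claimed gain; the incompressibility $\div u=0$ is what makes the drift term amenable to this comparison.

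\emph{Step 3 (Iteration and conclusion).} Starting from $\beta=\delta$ I would iterate Step 2; at each stage the improved regularity of $\theta$ feeds back, through Step 1 and the Schauder estimate for $\mathcal{R}^\perp$, into improved regularity of the coefficient $u$, so the coefficient never caps the bootstrap. After finitely many steps (treating the crossing of the integer threshold $1$ in the standard way, where $\nabla\theta$ becomes a genuine Hölder function and the product $u\cdot\nabla\theta$ can be estimated directly) one reaches $\theta\in L^\infty_t C^{\delta+2\alpha}_x=L^\infty_t C^{1,\delta-(1-2\alpha)}_x$ on $B_{1/2}$. In particular $\nabla\theta$ and $(-\Delta)^\alpha\theta$ are bounded (the latter since $\delta+2\alpha>2\alpha$), so from $\partial_t\theta=-u\cdot\nabla\theta-(-\Delta)^\alpha\theta$ the time derivative is bounded and Hölder, yielding the spacetime regularity $\theta\in C^{1,\delta-(1-2\alpha)}(B_{1/2}\times(-1/2,0])$ and hence a classical solution; a further bootstrap would give $C^\infty$, recovering \cite{ConstantinWu2008}.

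\emph{Main obstacle.} The delicate point is Step 2. Because $2\alpha<1$, the naive product estimate for $u\cdot\nabla\theta$ (equivalently for $\div(u\theta)$) loses regularity, so the gain cannot be read off from a direct Schauder estimate. It is only the removal of the local mean of the drift via the change of variables, combined with the extra $r^\delta$ decay of the remaining drift and the $2\alpha$-smoothing of the fractional heat equation, that produces the strictly positive gain $\gamma_0=\delta+2\alpha-1$; this is exactly where the hypothesis $\delta>1-2\alpha$ enters, and it is the same tilting-by-the-mean-velocity phenomenon that drives the rest of the paper.
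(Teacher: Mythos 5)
Your Step 1 is exactly the paper's Step 1: the same cutoff decomposition $u=\mathcal{R}^\perp(\theta\chi)+\mathcal{R}^\perp(\theta(1-\chi))$, Schauder for the compactly supported piece via \cite[Proposition 2.8]{Silvestre2007}, and smoothness of the far piece from the decay of the kernel. Your final step (reading off the time regularity from the equation once the full spatial regularity $C^{1,\delta-(1-2\alpha)}$ is in hand) also matches the paper. The divergence is in the middle: the paper obtains the spatial gain $\theta\in L^\infty_t C^{1,\delta-(1-2\alpha)}(B_{1/2})$ by citing \cite[Theorem 1.1]{Silvestre2012} as a black box, whereas you propose to reprove that Schauder estimate from scratch by a freezing-of-coefficients/Campanato iteration.

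The heuristic you give for that reproof identifies the correct mechanism (removing the local mean of the drift, the extra $r^{\delta}$ decay of $u-u(x_0,t_0)$, the $2\alpha$ smoothing, and the role of $\delta>1-2\alpha$), but as written Step 2 has a genuine gap precisely at the point that makes the supercritical regime hard. At the starting regularity $\beta=\delta<1$, the quantity $f=-(u-u(x_0,t_0))\cdot\nabla\theta$ is not a pointwise-defined source, so the estimate ``$|f|\sim r^{\delta}\cdot r^{\beta-1}$'' is circular; one must work in divergence form with $\overline{\div}\big((u-u(x_0,t_0))(\theta-c)\big)$ and put the derivative on the fractional heat kernel. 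But for $\alpha<\frac12$ the naive Duhamel bound then involves $\int_0^{r^{2\alpha}}\|\nabla e^{-\tau(-\Delta)^\alpha}\|_{L^\infty\to L^\infty}\,d\tau\sim\int_0^{r^{2\alpha}}\tau^{-1/(2\alpha)}\,d\tau$, which diverges. The integral is only rescued by matching the spatial localization of the coefficient to the time scale $\tau^{1/(2\alpha)}$ (so that the integrand becomes $\tau^{(\delta+\beta-1)/(2\alpha)}$, integrable exactly because $\delta+\beta-1+2\alpha>0$), and this in turn forces one to control the nonlocal tails of $\theta$ at every scale of the iteration. None of this is addressed in your sketch, and it is the entire content of Silvestre's theorem. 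So either cite \cite[Theorem 1.1]{Silvestre2012} as the paper does, or be prepared to carry out the divergence-form, scale-matched version of Step 2 with the tail estimates; the one-line scaling count does not substitute for it.
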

\begin{proof}
\textit{Step 1: We show that $\theta \in L^\infty([-1/2, 0], C^{1, \delta-(1-2\alpha)}(B_{1/2})) \, .$} \\This follows from showing that $u \in L^\infty((-1, 0], C^{ \delta}(B_1))$ and the general result on fractional advection-diffusion equations \cite[Theorem 1.1]{Silvestre2012}. Let us write $u= u_1+u_2$, where $u_1= \mathcal{R}^\perp(\theta \chi)$ and $u_2 = \mathcal{R}^\perp(\theta(1-\chi))$ for $\chi$ a smooth cut-off in space between $B_{3/2}$ and $B_2\, .$ We estimate by Schauder estimates \cite[Proposition 2.8]{Silvestre2007}
\begin{equation}
\norm{u_1}_{L^\infty((-1, 0], C^{\delta})} \leq C \norm{\theta \chi}_{L^\infty((-1, 0], C^{\delta})} \leq C \norm{\theta}_{L^\infty((-1, 0], C^{\delta}(B_2))} \, .
\end{equation}
Regarding $u_2$, we observe that $[u_2(t)]_{C^k}$ is bounded, uniformly in $t\in (-1,0]$, for all $k\geq 1$. Indeed, consider for instance $k=1$: For $x \in B_1$ and fixed time, by integration by parts (the boundary term at infinity vanishes using the uniform boundedness of $\theta$)
\begin{align}
\partial_j \mathcal{R}_i u_2(x) &= c \int \frac{x_i-z_i}{\lvert x-z \rvert^3} \partial_j ((1-\chi(z)) \theta(z) \, dz 
 &= - c \int_{\lvert z \rvert \geq \frac{3}{2} } \partial_{z_j} \left( \frac{x_i-z_i}{\lvert x-z \rvert^3} \right) (1- \chi(z)) \theta(z) \, dz \,,
\end{align}
so that, using that for $x \in B_1$ and $z \in B_{3/2}^c$ we have $\lvert x - z \rvert \geq \frac{1}{2}$, we have
\begin{equation}
\lvert \partial_j \mathcal{R}_i u_2(x)  \rvert \leq C \norm{\theta}_{L^\infty}\int_{\lvert x-z \rvert \geq \frac{1}{2}} \frac{1}{\lvert x-z \rvert^3} \, dz < + \infty \, .
\end{equation}
We have obtained that 
\begin{equation}
\norm{u}_{L^\infty((-1, 0], C^{ \delta}(B_1))} \leq C (\norm{\theta}_{L^\infty((-1, 0], C^{ \delta}(B_2))}+ \norm{\theta}_{L^\infty(\R^2 \times (-1, 0])}) \, .
\end{equation}
\textit{Step 2: We show that $\theta \in C^{1, \delta-(1-2\alpha)}((-1/2, 0], L^\infty(B_{1/2}))\, .$}\\
Observe that $\theta$ solves a heat equation with right-hand side $u \cdot \nabla \theta \in L^\infty((-1/2,0], C^{\delta-(1-2\alpha)}(B_{1/2}))$, so that
\begin{equation}
\partial_t \theta \in  L^\infty((-1/2,0], C^{\delta-(1-2\alpha)}(B_{1/2})) \, .
\end{equation}
In particular, $\theta \in {\rm Lip}((-1/2, 0], C^{\delta-(1-2\alpha)}(B_{1/2})) \, .$ Repeating the argument, we obtain that $\theta \in C^{1, \delta-(1-2\alpha)}(B_{1/2} \times (-1/2, 0]) \, .$
Higher regularity then follows from energy estimates.
\end{proof}

\section{Existence of suitable weak solutions}\label{a:existencesws}
\begin{proof}[Proof of Theorem ~\ref{thm:existencesuitable}]
Fix $\theta_0 \in L^2(\R^2)$. For $\epsilon>0$, we consider the system with added vanishing viscosity term
\begin{equation}\label{eq:vanishingvis}
\begin{cases} \partial_t \theta + u\cdot \nabla \theta + (-\Delta)^\alpha \theta = \epsilon \Delta \theta \\
u = \nabla^\perp (-\Delta)^{-\frac{1}{2}} \theta \,,
\end{cases}
\end{equation}
complemented with the initial datum $\theta(\cdot, 0) = \theta_0\, .$ For any $\epsilon>0$, the system \eqref{eq:vanishingvis} admits a global smooth solution $\theta_\epsilon:\R^2\times (0, \infty) \to \R$. Moreover, for any $t>0$, $\theta_\epsilon(\cdot, t) \in L^2(\R^2)$ and for any $0 \leq s< t$,  we have the energy equality 
\begin{equation}\label{eq:energyeq}
\int \theta_\epsilon^2(x,t) \, dx + 2 \int_s^t \int \left[\lvert (-\Delta)^\frac{\alpha}{2}\theta_\epsilon\rvert^2(x, \tau) + \epsilon \lvert \nabla \theta_\epsilon\rvert^2 (x,\tau)  \right]\, dx \, d\tau = \int \theta_\epsilon^2(x,s) \, dx \, . 
\end{equation}
Theorem \ref{thm:LerayConstWu} also applies to \eqref{eq:vanishingvis}, so that  $\theta_\epsilon$ is in $L^\infty$ for $t>0$ with the uniform-in-$\epsilon$ bound
\begin{equation}\label{eq:linftyvv}
\norm{\theta_\epsilon(t)}_{L^\infty} \leq Ct^{-\frac{1}{2\alpha}} \norm{\theta_0}_{L^2} \, .
\end{equation}
Finally, with the obvious modifications of the computation in Section \ref{subsec:energyineqsmooth}, we have for any nonnegative test function $\varphi \in C^\infty_c(\R^3_+)$, locally constant in $y$ in a neighbourhood of $y=0$, any nonnegative and convex $f\in C^2(\R)$ and any $t>0$ that
	\begin{align}
&\int_{\R^2} \varphi (x,0,t) f(\theta_\epsilon) (x,t) \, dx +c_{\alpha}\int_0^t \int_{\R^3_+} y^b|\overline{ \nabla}  \theta_\epsilon^* |^2 f''(\theta_\epsilon^*)\varphi \, dx \, dy \, ds 
\\
&\leq \int_0^t \int_{\R^2} \left[f( \theta_\epsilon) (\partial_t \varphi|_{y=0}+ \epsilon \Delta \varphi|_{y=0}) + u_\epsilon  f( \theta_\epsilon) \cdot \nabla \varphi|_{y=0} \right]\, dx \, ds \nonumber + c_{\alpha} \int_0^t\int_{\R^3_+} y^b f( \theta_\epsilon^*) \overline\Delta_b \varphi \, dx \, dy \, ds \, \\
&=: C(\epsilon) + D(\epsilon) \, .
	\end{align}
We want to pass to the limit $\epsilon \to 0$. From \eqref{eq:energyeq} with $s=0$ and the Sobolev embedding of $\dot{W}^{\alpha, 2}(\R^2) \hookrightarrow L^\frac{2}{1-\alpha}(\R^2)$, we infer by interpolation that the family $\{\theta_\epsilon \}_{\epsilon>0}$ is uniformly bounded in $$L^\infty([0, \infty), L^2) \cap L^2([0, \infty), \dot{W}^{\alpha,2}) \hookrightarrow L^{2(1+\alpha)}(\R^2 \times [0, \infty))\, .$$
By Banach-Anaoglu, for any fixed time $T>0$, there exists $\theta \in L^2(\R^2 \times [0, T])$ and a subsequence $\epsilon_k \to 0$ such that $\theta_{\epsilon_k} \rightharpoonup \theta$ weakly in $L^2(\R^2 \times [0, T])$. We now claim that this convergence is in fact strong via an Aubin-Lions type argument in the same spirit as Step 2 of the proof of Lemma \ref{lem:compactness}. Fix $\eta >0$ and a family of mollifiers $\{\phi_\delta \}_{\delta \geq 0} \subseteq C^\infty_c(\R^2)$ in space. For $k,j \geq 1$ we estimate
\begin{equation}\label{eqn:triang}
\norm{\theta_{\epsilon_j}- \theta_{\epsilon_k}}_{L^2(\R^2 \times [0, T])} \leq \norm{\theta_{\epsilon_j} - \theta_{\epsilon_j} \ast \phi_\delta}_{L^2} + \norm{\theta_{\epsilon_k} - \theta_{\epsilon_k} \ast \phi_\delta}_{L^2}  + \norm{(\theta_{\epsilon_j}-\theta_{\epsilon_k}) \ast \phi_\delta}_{L^2} \, .
\end{equation}
The first two contributions converge to $0$ independently of $k$ and $j$ due to a bound of the form $\norm{\theta_{\epsilon_k}-\theta_{\epsilon_k} \ast \phi_\delta}_{L^2(\R^2 \times [0, T])}^2\leq C \delta^{2\alpha}$ obtained as in \eqref{eqn:tocopy} with $\eta_k$ replaced by $\theta_{\eps_k}$. 
We now choose $\delta$ small enough such that this contribution does not exceed $\frac{\eta}{3}\, .$ Having $\delta$ fixed, 
we  claim that the family of curves $\{ t \mapsto \theta_{\epsilon_k}(\cdot, t) \}_{k \geq 1}\, $  is equicontinuous and equibounded with values in $W^{1, \infty}$. Indeed, by the energy equality \eqref{eq:energyeq} with $s=0$ and the Calderon-Zygmund estimate $\norm{u_{\epsilon_k}}_{L^2(\R^2 \times [0, T])} \leq C\norm{\theta_{\epsilon_k}}_{L^2(\R^2 \times [0, T])}$, we estimate
\begin{align}
\norm{\partial_t \theta_{\epsilon_k} \ast \phi_{\delta}}_{L^2([0, T],W^{1, \infty})} &\leq \norm{\div(u_{\epsilon_k} \theta_{\epsilon_k}) \ast \phi_\delta}_{L^2 W^{1, \infty}}  + \norm{\theta_{\epsilon_k} \ast (-\Delta)^\alpha \phi_\delta}_{L^2 W^{1, \infty}}  + \epsilon_k \norm{\theta_{\epsilon_k} \ast \Delta \phi_\delta}_{L^2 W^{1, \infty}} \\
&\leq \left(\norm{u_{\epsilon_k}}_{L^2(\R^2 \times [0, T])} \norm{\theta_0}_{L^2} +2\norm{\theta_{\epsilon_k}}_{L^2(\R^2 \times [0, T])} \right) \norm{\phi_\delta}_{W^{3, \infty}} \\
&\leq C(\delta) \,.
\end{align}
By Ascoli-Arzela  the sequence $\{\theta_k \ast \phi_\delta \}_{k \geq 1}$ converges uniformly on $\R^2 \times [0, T]\,$ and by uniqueness of limits, we infer that this limit must coincide with $\theta \ast \phi_\delta\,.$  We can therefore choose $N\geq 1$ big enough such that for any $k, j \geq N$ we have $\norm{(\theta_{\epsilon_j}- \theta_{\epsilon_k}) \ast \phi_\delta}_{L^2(\R^2 \times [0,T])} \leq \frac{\eta}{3} \, .$ We conclude that for $k, j \geq N$ there holds
$\norm{\theta_{\epsilon_k}- \theta_{\epsilon_j}}_{L^2(\R^2 \times [0, T])} \leq \eta \, .$
Since $\eta$ was arbitrary, we conclude by uniqueness of limits that $\theta_{\epsilon_k} \rightarrow \theta$ strongly in $L^2(\R^2 \times [0, T]) \, .$ By the uniform boundedness in $L^{2(1+\alpha)}(\R^2 \times [0, \infty))$ and by \eqref{eq:linftyvv} we also deduce that $\theta_k \rightarrow \theta$ strongly in $L^r(\R^2 \times [0, T])$ for any $2 \leq r < 2(1+\alpha)$ and strongly in $L^r(\R^2 \times [\tau, T])$ for any $\tau>0$ and any $2 \leq r < \infty$. By Calderon-Zygmund, we infer that $u_\epsilon \to u:= \mathcal{R}^\perp \theta$ strongly in $L^2(\R^2\times[0, T])$ (and $L^r$ respectively). Passing to the limit $k \to \infty$ in the equation \eqref{eq:vanishingvis}, we infer that $\theta$ is a distributional solution to \eqref{eq:SQG}--\eqref{eq:u}. We are left to pass to the limit in the global and local energy (in-)equality. Consider first \eqref{eq:energyeq}. By Banach Anaoglu and uniqueness of limit $(-\Delta)^\frac{\alpha}{2} \theta_{\epsilon_k} \rightharpoonup (-\Delta)^\frac{\alpha}{2} \theta$ weakly in $L^2(\R^2 \times [0, T])$ and by weak lower semicontinuity 
\begin{equation}
\int_s^t \int \lvert (-\Delta)^\frac{\alpha}{2} \theta \rvert^2(x, \tau) \, dx \, d \tau \leq \liminf_{k\to \infty}\int_s^t \int \lvert (-\Delta)^\frac{\alpha}{2} \theta_{\epsilon_k} \rvert^2(x, \tau) \, dx \, d \tau
\end{equation}
for any $0 \leq s < t \,.$ For almost every $t \in [0,T]$ we can extract a further subsequence such that $\theta_{\epsilon_k}(\cdot, t) \rightarrow \theta(\cdot, t)$ strongly in $L^2(\R^2)$. By passing to the limit in \eqref{eq:energyeq}, we thereby obtain \eqref{e:g_energy_1} and \eqref{e:g_energy_2} 
for almost every $0<s<t\, .$ We obtain it for every $t>0$ (and almost every $0<s<t$) by observing that up to changing $\theta$ on a set of measure $0$, we may assume that $\theta$ is continuous with respect to the weak topology on $L^2(\R^2) \,.$ We are left to pass to the limit in the localized energy inequality for $f(x)= \frac{(x-M)^2}{L^2}$ and $f(x)= \lvert \frac{x-M}{L} \rvert^q$ for $q >2$ respectively, with some $L>0$ and $M \in \R$. Let us denote $\eta_k := (\theta_{\epsilon_k}-M)/L$, $\eta:= (\theta-M)/L\,$ and let us fix $\tau, R >0$ such that $\supp \varphi \subseteq B_R(0) \times [0, R] \times [\tau, T]\,.$ From the strong convergence established before, we infer that $\eta_k \to \eta$ strongly in $L^r_{loc}(\R^2 \times [ \tau, T])\, $ for $2 \leq r < \infty \, .$ Up to extracting a further subsequence and a diagonal argument, we obtain that $\eta_k(t) \rightarrow \eta(t)$ strongly in $L^r_{loc}(\R^2)$ for almost every $t>0$ and any $r \in \N_{\geq 2}$. By interpolation, the former statement holds in fact for every $2 \leq r < \infty\,.$  We deduce that for almost every $t>0\,,$ for $q=2$ and any $q\geq 4$
\begin{align}
\lim_{k \to \infty} &\int_{\R^2} \varphi (x,0,t) f( \theta_{\epsilon_k})^2(x,t) \, dx   =\lim_{k \to \infty} \int_{\R^2} \varphi (x,0,t) \lvert \eta_k \rvert^q(x,t) \, dx   =  \int_{\R^2} \varphi (x,0,t) \lvert \eta \rvert^q(x,t) \, dx \,,  \\
\lim_{k \to \infty} &C(\epsilon_k) = \int_0^t \int_{\R^2} \left[\lvert \eta \rvert^q \partial_t \varphi|_{y=0} + u \lvert \eta \rvert^q \cdot \nabla \varphi|_{y=0} \right] \, dx \, ds \,.
\end{align}
We recall that from the Poisson formula $
\theta_{\epsilon_k}^*(x,y,t) =( P(\cdot,y) \ast \theta_{\epsilon_k}(\cdot, t))(x)$
and Young's convolution inequality
\begin{equation}
\norm{\theta_{\epsilon_k}^*}_{L^q(\R^2 \times [0, R] \times [\tau, T],\, y^b)} \leq C(R) \norm{P(1, \cdot)}_{L^1} \norm{\theta_{\epsilon_k}}_{L^q(\R^2 \times [\tau, T])}= C(R, \alpha)\norm{\theta_{\epsilon_k}}_{L^q(\R^2 \times [\tau, T])}   \,,
\end{equation}
where we used that $\norm{P(\cdot, y)}_{L^1}= \norm{P(\cdot, 1)}_{L^1}$ for $y>0 \, .$
We deduce that $\theta_{\epsilon_k}^* \rightarrow \theta^*$ strongly in $L^q(\R^2\times [0, R] \times [\tau, T], y^b)\,.$ By linearity, $\eta_k^* = \frac{\theta_{\epsilon_k}^*-M}{L}$ so that $\eta_k^* \rightarrow \eta^*$ strongly in $L^q(B_R \times [0, R] \times [\tau, T], y^b )\,.$  Hence
\begin{equation}
\lim_{k \to \infty} D(\epsilon_k) = \int_0^t \int_{\R^3_+} y^b \lvert \eta^*\rvert^q  \overline{\Delta}_b \varphi \, dx \, dy \, ds \,.
\end{equation}
Moreover, we also deduce $\overline{\nabla} \eta_k^* \rightharpoonup \overline{\nabla} \eta^*$ and$\overline{\nabla} \lvert \eta_k^* \rvert^\frac{q}{2} \rightharpoonup \nabla \lvert \eta^* \rvert^\frac{q}{2} $ weakly in $L^2(B_R(0) \times [0, R] \times [\tau, T], y^b)\,$ and we infer  by weak lower semicontinuity and the positivity of $\varphi$ that
\begin{align}
\int_0^t \int_{\R^3_+} y^b \lvert \overline{\nabla} \eta^* \rvert^2 \varphi \, dx \, dy \, ds &\leq \liminf_{k \to \infty}  \int_0^t \int_{\R^3_+}y^b \lvert \overline{\nabla} \eta_k^* \rvert^2 \varphi \, dx \, dy \,ds \\
\int_0^t \int_{\R^3_+} y^b \lvert \overline{\nabla} \lvert \eta^* \rvert^\frac{q}{2} \rvert^2 \varphi \, dx \, dy \, ds &\leq \liminf_{k \to \infty}  \int_0^t \int_{\R^3_+}y^b \lvert \overline{\nabla} \lvert\eta_k^* \rvert^\frac{q}{2} \rvert^2 \varphi \, dx \, dy \,ds
\end{align}
for any $t >0 \,  .$ Passing to the limit $k \to \infty$, we obtain \eqref{eqn:suit-weak-tested2} and \eqref{eqn:suit-weak-tested} for almost every $t>0 \, .$ 
\end{proof}

\bigskip
\textbf{ Acknowledgements}. The authors thank the anonymous referee for pointing out a mistake in a previous version of the manuscript, which led to a substantial improvement in the paper. The authors have been supported by
the SNF Grant 182565 ``Regularity issues for the Navier-Stokes equations and for other variational problems".
MC has been supported by the NSF under Grant No. DMS-1638352. Both authors acknowledge gratefully the hospitality of the Institute for Advanced Studies, where part of this work was done. 

\bibliographystyle{plain}
\bibliography{SQG}
\end{document}